\theoremstyle{definition}
\newtheorem{theorem}{Theorem}[section]
\newtheorem{lemma}[theorem]{Lemma}
\newtheorem{proposition}[theorem]{Proposition}
\theoremstyle{definition}
\theoremstyle{definition}
\newtheorem{remark}[theorem]{Remark}
\newtheorem{example}[theorem]{Example}
\begin{document}
	\baselineskip=17pt
	\title[]{Orbit spaces  of Free Involutions on The Product of Three Spheres}
	\author[Dimpi and Hemant Kumar Singh]{ Dimpi and Hemant Kumar Singh}
	\address{ Dimpi \newline 
		\indent Department of Mathematics\indent \newline\indent University of Delhi\newline\indent 
		Delhi -- 110007, India.}
	\email{dimpipaul2@gmail.com}
	\address{  Hemant Kumar Singh\newline\indent 
		Department of Mathematics\newline\indent University of Delhi\newline\indent 
		Delhi -- 110007, India.}
	\email{hemantksingh@maths.du.ac.in}

	\date{}
	\thanks{The first author of the paper is  supported by SRF of UGC, New Delhi, with  reference no.: 201610039267.}
	\begin{abstract} 
		
		\noindent In this paper, we have determined the orbit spaces of free involutions on  a finitistic space having mod $2$  cohomology of the product of three spheres $\mathbb{S}^n\times \mathbb{S}^m \times \mathbb{S}^l, 1 \leq n \leq m \leq l$. 	This paper generalizes the results proved by Dotzel et al. \cite{Dotzel} for free involutions on the product of two sphere $\mathbb{S}^n \times \mathbb{S}^m,1\leq n\leq m.$
		As an application, we have  also derived the Borsuk-Ulam type results. 
		
	\end{abstract}
	\subjclass[2010]{Primary 57S17; Secondary 57S25}
	
	\keywords{Free action; Finitistic space; Leray-Serre spectral sequence; Orbit spaces.}

	\maketitle
	\section {Introduction}

	\noindent  Let $G$ be a compact Lie group acting on a compact Hausdroff space $X.$ The study of orbit space $X/G,$ when $G$ acts freely on $X,$ has attracted many mathematicians over the world since the beginning of the twentieth century. In $1925$-$26,$ H. Hopf raised the question to classify the orbit space of free actions of finite cyclic groups on spheres. In general, it is difficult  to classify the orbit spaces up to homeomorphism or homotopy type. In this direction, Oliver\cite{oliver} proved  that the orbit spaces of actions of a compact lie group $G$ on   Euclidean spaces $\mathbb{R}^n$ are contractible. In $1964,$ Hirsch et al.  \cite{milnor} proved that the orbit space  of a free involution on $n$-sphere $\mathbb{S}^n$ is a homotopy type of $\mathbb{R}P^n.$ Further, the orbit spaces of finite group actions on $n$-sphere $\mathbb{S}^n$ have been studied in \cite{livesay,rice,ritter,rubin}. Su  \cite{su} determined the orbit spaces of free circle actions on spheres.  Tao \cite{tao} determined the orbit spaces of free involutions on $\mathbb{S}^1\times \mathbb{S}^2.$  Ritter  \cite{Ritter} extended the Tao's results to cyclic groups of order $2n.$ In 1972, Ozeki et al. \cite{ozeki} determined the orbit space of a
	free circle action on a manifold whose cohomology ring is isomorphic to the product of spheres $\mathbb{S}^{2n+1}\times \mathbb{S}^{2n+1}$ with integer coefficients. The orbit spaces
	of free actions of $G=\mathbb{Z}_p, p$ a prime, or $G=\mathbb{S}^d,d=1$ or $3,$ on a finitistic space $X$ whose mod $2$ or rational cohomology ring is isomorphic to the product of two spheres $\mathbb{S}^n \times \mathbb{S}^m$ have been studied in \cite{Dotzel} and \cite{anju}. However, the results for compact manifolds other than spheres  are less known.  Myers \cite{myner}
	determined the  orbit spaces of free involutions on three dimensional lens spaces. Recently, Dey et al. \cite{dey}, Morita et al. \cite{morita}, and  Singh \cite{singh} discussed the orbit spaces of free involutions on real Milnor manifolds, Dold manifolds,
	and the product of two projective spaces, respectively.

	Continuing this thread of research, this paper is concerned with the study of the orbit spaces of free involutions on a finitistic space having mod $2$ cohomology of the product of three spheres $\mathbb{S}^n \times  \mathbb{S}^m \times \mathbb{S}^l,$ $n \leq m \leq l .$  For example: (1) The complex Stiefel manifold $V_{n,n-3}$ has integral cohomology of the product of three spheres $\mathbb{S}^{2n-9}\times \mathbb{S}^{2n-7}\times\mathbb{S}^{2n-5},$ for all $n\geq 5$ \cite{adam}, which admits a free involution defined by $(v_1,v_2,\cdots, v_{n-3})\mapsto (gv_1,gv_2,\cdots, gv_{n-3}), $ where $v_i's, 1\leq i \leq n-3,$ are orthonormal vectors in $\mathbb{C}^n$ and $g \in \mathbb{Z}_2,$ and (2) The product of special unitary 3-group and a sphere  $SU(3)\times \mathbb{S}^l$ and unitary 2-group and a sphere $U(2)\times \mathbb{S}^l$ has integral cohomology of the product of three spheres $\mathbb{S}^3 \times  \mathbb{S}^5 \times \mathbb{S}^l$ and $\mathbb{S}^1 \times  \mathbb{S}^3 \times \mathbb{S}^l,$ respectively \cite{borel}. Consider,  diagonal actions on  $SU(3)\times \mathbb{S}^l$ and $U(2)\times \mathbb{S}^l$ obtained by taking trivial actions of $G=\mathbb{Z}_2$ on $SU(3)$ and $U(2),$ and the antipodal action on $\mathbb{S}^l.$ This gives  free involutions   on  $SU(3)\times \mathbb{S}^l$ and $U(2)\times \mathbb{S}^l,$ respectively.
	This paper generalizes the results proved by Dotzel et al. \cite{Dotzel} for free involutions on the product of two sphere $\mathbb{S}^n \times \mathbb{S}^m,1\leq n\leq m.$
	As an application, we have also determined the  Borsuk-Ulam type results.

	\section{Preliminaries}  
	\noindent	In this section, we review some basic definitions and results that are used in this paper.
	A paracompact Hausdroff space is called finitistic if every open covering has a finite dimensional open refinement. This is the most suitable topological space for the study of the relationship between the cohomology structure of the total space and that of the orbit space of a transformation group. It includes all compact Hausdroff spaces and paracompact spaces with finite covering dimension. Let $G$ be a finite cyclic group  acting on a finitistic space $X.$  The associated Borel fibration is $ X \stackrel{i} \hookrightarrow X_G \stackrel{\pi} \rightarrow B_G,$ where $X_G = (E_G\times G)/G$ (Borel space) obtained by diagonal action of $G$ on space $X\times E_G$ and $B_G$ (classifying space) is the orbit space of  free action of $G$ on contractible space $E_G.$  We recall some results of Leray-Serre spectral sequence associated with Borel fibration $ X \stackrel{i} \hookrightarrow X_G \stackrel{\pi} \rightarrow B_G.$ For proofs, we refer \cite{bredon,mac}.
	
	\begin{proposition}(\cite{mac})
		Suppose that  the system of local coefficients on $B_G$ is simple.	Then, the  homomorphisms $i^*: H^*(X_G) \rightarrow H^*(X)$ and $\pi^*: H^*(B_G) \rightarrow H^*(X_G)$ are the edge homomorphisms, \begin{center}
			$ H^k(B_G)=E_2^{k,0}\rightarrow E_3^{k,0}\rightarrow \cdots E_k^{k,0}\rightarrow E_{k+1}^{k,0} = E_{\infty}^{k,0} \subset H^k(X_G),$ and $  H^i(X_G) \rightarrow E_{\infty}^{0,i} \hookrightarrow E_{l+1}^{0,i} \hookrightarrow E_{l}^{0,i} \hookrightarrow \cdots \hookrightarrow E_{2}^{0,i} \hookrightarrow E_2^{0,i} \hookrightarrow H^i(X),$ respectively.
		\end{center} 
	\end{proposition}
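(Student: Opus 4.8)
\emph{Proof proposal.} This is the standard identification of the edge homomorphisms of the cohomology Leray--Serre spectral sequence, so the plan is to extract it from the construction of the spectral sequence together with its naturality under morphisms of fibrations. I take as granted from \cite{bredon,mac} that, the local system being simple and $X$, $B_G$ connected, the spectral sequence has $E_2^{p,q}\cong H^p(B_G;H^q(X))$ (hence $E_2^{*,0}\cong H^*(B_G)$ and $E_2^{0,*}\cong H^*(X)$); that it converges to $H^*(X_G)$ with a filtration in which the bottom term $F^kH^k(X_G)$ is $E_\infty^{k,0}$ and the top quotient $H^i(X_G)/F^1H^i(X_G)$ is $E_\infty^{0,i}$; and that a morphism of fibrations induces a morphism of spectral sequences which on $E_2$ is the map built from the induced maps on base and fibre cohomology and on abutments is the filtered map induced on total spaces.

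First I would locate the differentials. On the bottom row $q=0$ every outgoing differential $d_r\colon E_r^{k,0}\to E_r^{k+r,1-r}$ vanishes for degree reasons, so only the incoming $d_r\colon E_r^{k-r,r-1}\to E_r^{k,0}$ with $2\le r\le k$ can be nontrivial; hence $E_2^{k,0}\twoheadrightarrow E_3^{k,0}\twoheadrightarrow\cdots\twoheadrightarrow E_{k+1}^{k,0}=E_\infty^{k,0}\subseteq H^k(X_G)$. Dually, on the left edge $p=0$ every incoming differential has zero source, so only the outgoing $d_r\colon E_r^{0,i}\to E_r^{r,i-r+1}$ with $2\le r\le i+1$ survive, giving a chain of subgroups $E_\infty^{0,i}=E_{i+2}^{0,i}\hookrightarrow\cdots\hookrightarrow E_2^{0,i}=H^i(X)$ together with the quotient map $H^i(X_G)\twoheadrightarrow E_\infty^{0,i}$. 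This produces the two composites in the statement, and it remains to identify them with $\pi^*$ and $i^*$.

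For the base edge map I would apply naturality to the morphism of fibrations from $X\hookrightarrow X_G\xrightarrow{\pi}B_G$ to the trivial fibration $\{*\}\hookrightarrow B_G\xrightarrow{=}B_G$ determined by $\pi$ on total spaces and the identity on bases. The spectral sequence of the target is concentrated in the row $q=0$, degenerates at $E_2$, and has identity edge map, while the induced morphism of spectral sequences is the identity of $H^p(B_G)$ on $E_2^{p,0}$ (and zero for $q>0$) and is $\pi^*$ on abutments; since $\pi^*$ then carries all of $H^k(B_G)$ into $F^kH^k(X_G)=E_\infty^{k,0}$ and, on $E_\infty^{k,0}$, the morphism of spectral sequences is the natural quotient $E_2^{k,0}\twoheadrightarrow E_\infty^{k,0}$, a short diagram chase forces $\pi^*$ to equal $H^k(B_G)=E_2^{k,0}\twoheadrightarrow E_\infty^{k,0}\hookrightarrow H^k(X_G)$. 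For the fibre edge map I would symmetrically restrict the fibration over a point $b_0\in B_G$ to obtain $X\xrightarrow{=}X\to\{b_0\}$ and use the morphism of fibrations to $X\hookrightarrow X_G\xrightarrow{\pi}B_G$ given by $i$ on total spaces and $b_0$ on bases: here the source has degenerate spectral sequence concentrated in the column $p=0$ with identity edge map, the induced morphism of spectral sequences is the identity of $H^i(X)$ on $E_2^{0,i}$ (zero for $p>0$) and is $i^*$ on abutments, and since $i^*$ kills $F^1H^i(X_G)$ it factors through $E_\infty^{0,i}$; tracking this factorisation through the inclusions $E_\infty^{0,i}\hookrightarrow\cdots\hookrightarrow E_2^{0,i}$ identifies $i^*$ with $H^i(X_G)\twoheadrightarrow E_\infty^{0,i}\hookrightarrow E_2^{0,i}=H^i(X)$.

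The genuine work is concentrated in the foundational facts quoted in the first paragraph --- that the abutment filtration is the one coming from the (co)skeletal filtration of $B_G$, that $E_\infty$ is its associated graded, and that morphisms of fibrations act functorially on the whole package compatibly with the identifications $E_2^{*,0}=H^*(B_G)$ and $E_2^{0,*}=H^*(X)$ --- all of which are established in \cite{bredon,mac}; granting these, the proposition reduces to the elementary diagram chases above, so we do not reproduce the construction here.
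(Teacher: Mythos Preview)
The paper does not supply its own proof of this proposition; it is simply quoted from \cite{mac} as a standard fact about the Leray--Serre spectral sequence, and the authors use it as a black box throughout. Your argument is correct and is exactly the standard proof one finds in the cited reference: the degree constraints that force the maps $E_r^{k,0}\to E_{r+1}^{k,0}$ to be surjections and the maps $E_{r+1}^{0,i}\to E_r^{0,i}$ to be injections, followed by the naturality argument comparing with the degenerate fibrations $\{*\}\hookrightarrow B_G\to B_G$ and $X\hookrightarrow X\to\{b_0\}$ to identify the resulting composites with $\pi^*$ and $i^*$. There is nothing further to compare---you have supplied precisely what the paper deliberately omits by citation.
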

	
	\begin{proposition}(\cite{mac})\label{2.4}
		Let $G=\mathbb{Z}_2$ act on a finitistic space $X$ and $\pi_1(B _G)$ acts trivially on $H^*(X).$ Then, the system of local coefficients on $B_G$ is simple and $$E^{k,i}_2= H^{k}(B_G) \otimes  H^i(X), ~k,i \geq 0.$$
	\end{proposition}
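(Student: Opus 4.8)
The plan is not to prove anything from scratch but to read both assertions off the general structure of the cohomology Leray--Serre spectral sequence of the Borel fibration $X \stackrel{i}\hookrightarrow X_G \stackrel{\pi}\rightarrow B_G$. First I would recall that the phrase ``the system of local coefficients on $B_G$ is simple'' means, by definition, that the monodromy representation of $\pi_1(B_G)$ on the fibre cohomology $H^*(X)$ is trivial. For $G=\mathbb{Z}_2$ one has $B_G\simeq \mathbb{R}P^\infty$, so $\pi_1(B_G)\cong\mathbb{Z}_2$, and the monodromy of the Borel fibration on $H^*(X)$ is exactly the action on $H^*(X)$ induced by the given involution on $X$. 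Thus the hypothesis that $\pi_1(B_G)$ acts trivially on $H^*(X)$ is literally the assertion that the coefficient system is simple; this half of the statement is just an unwinding of definitions, and I expect no difficulty there.

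For the identification of $E_2$, the next step is to quote the standard description of the $E_2$-page of the Leray--Serre spectral sequence of a fibration whose coefficient system is simple, namely $E_2^{k,i}\cong H^k(B_G;\,H^i(X))$, ordinary singular cohomology of $B_G$ with untwisted coefficients in the abelian group $H^i(X)$. It then remains only to strip off the coefficients. Since all cohomology throughout the paper is taken with $\mathbb{Z}_2$-coefficients, $H^i(X)=H^i(X;\mathbb{Z}_2)$ is a vector space over the field $\mathbb{Z}_2$, hence a free $\mathbb{Z}_2$-module, so the relevant $\mathrm{Tor}$-term vanishes and the universal coefficient theorem gives a natural isomorphism $H^k(B_G;\,H^i(X))\cong H^k(B_G)\otimes H^i(X)$. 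Combining this with the previous display yields $E_2^{k,i}\cong H^k(B_G)\otimes H^i(X)$ for all $k,i\ge 0$, as claimed.

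I do not anticipate a genuine obstacle: no differentials and no convergence considerations enter, and the whole argument is a chain of definitional reductions followed by one application of the universal coefficient theorem over a field. The only points that deserve a word of care are the classical facts underlying the set-up --- that $X$ finitistic and $G$ a compact Lie group make $X_G\to B_G$ a fibration to which the Leray--Serre machinery applies, and that the monodromy of this fibration on $H^*(X)$ coincides with the action induced by $G$ on $X$ --- both of which are standard and are exactly why the statement is attributed to \cite{mac} (see also \cite{bredon}).
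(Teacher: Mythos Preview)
Your proposal is correct and is precisely the standard argument: the paper itself gives no proof of this proposition but simply cites \cite{mac}, so your reduction to the definition of a simple coefficient system together with the universal coefficient theorem over $\mathbb{Z}_2$ is exactly what is intended.
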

	
	\begin{proposition}\label{nontrivial}(\cite{bredon})
		Let $G=\mathbb{Z}_2$ act on a finitistic space $X$ and  $\pi_1(B _G)$ acts nontrivially on $H^*(X).$  Then,  the $E_2$ term of the Leray-Serre spectral sequence of the fibration  $ X \stackrel{i} \hookrightarrow X_G \stackrel{\pi} \rightarrow B_G$ is given by
		\[
		E_2^{k,i}=
		\begin{cases}
			\text{ker $\tau$} & \mbox{for}~ ~k=0, \\
			\text{ker $\tau$/im $\sigma$} & \text{for } k>0,\\

		\end{cases}
		\]
		where $\tau = \sigma = 1+g^*,$ $g^*$ is induced by a generator $g$ of $G.$
	\end{proposition}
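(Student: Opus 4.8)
The plan is to read off the $E_2$-page from the general description of the Leray-Serre spectral sequence of a fibration with (possibly twisted) coefficients, specialised to $G=\mathbb{Z}_2$ and $B_G=\mathbb{R}P^\infty$. For the Borel fibration $X\stackrel{i}{\hookrightarrow}X_G\stackrel{\pi}{\rightarrow}B_G$ one always has $E_2^{k,i}\cong H^k(B_G;\mathcal{H}^i(X))$, where $\mathcal{H}^i(X)$ denotes the local coefficient system on $B_G$ determined by the monodromy action of $\pi_1(B_G)$ on $H^i(X)$; this is exactly what Proposition \ref{2.4} records when the action is trivial, and in the nontrivial case one simply keeps the twisted coefficients. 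So the first step is to verify that, for the Borel fibration, this monodromy action coincides with the prescribed action of $\pi_1(B_G)=G=\mathbb{Z}_2$ on $H^i(X)$ induced by the generator $g$. Writing $M:=H^i(X)$, viewed as a module over the group ring $\mathbb{Z}[\mathbb{Z}_2]$ through $g^*$, the computation then reduces, for each fixed $i$, to that of $H^k(B_G;M)$.

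For the second step, since $G=\mathbb{Z}_2$ the classifying space $B_G=\mathbb{R}P^\infty$ is a $K(\mathbb{Z}_2,1)$, so $H^k(B_G;M)\cong H^k(\mathbb{Z}_2;M)$, the group cohomology of $\mathbb{Z}_2$ with coefficients in the $\mathbb{Z}[\mathbb{Z}_2]$-module $M$. I would evaluate this using the standard $2$-periodic free resolution of the trivial module,
\[
\cdots\xrightarrow{\,1+g\,}\mathbb{Z}[\mathbb{Z}_2]\xrightarrow{\,1-g\,}\mathbb{Z}[\mathbb{Z}_2]\xrightarrow{\,1+g\,}\mathbb{Z}[\mathbb{Z}_2]\xrightarrow{\,1-g\,}\mathbb{Z}[\mathbb{Z}_2]\xrightarrow{\ \varepsilon\ }\mathbb{Z}\longrightarrow 0,
\]
which is nothing but the cellular chain complex of $\mathbb{S}^{\infty}=E_{\mathbb{Z}_2}$ equipped with its antipodal $\mathbb{Z}_2$-CW structure. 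Applying $\Hom_{\mathbb{Z}[\mathbb{Z}_2]}(-,M)$ and using $\Hom_{\mathbb{Z}[\mathbb{Z}_2]}(\mathbb{Z}[\mathbb{Z}_2],M)\cong M$ converts this into the cochain complex
\[
M\xrightarrow{\,1-g^*\,}M\xrightarrow{\,1+g^*\,}M\xrightarrow{\,1-g^*\,}M\xrightarrow{\,1+g^*\,}\cdots,
\]
so that $H^0(\mathbb{Z}_2;M)=\ker(1-g^*)$, while for $k\geq 1$ the group $H^k(\mathbb{Z}_2;M)$ is the kernel of one of the maps $1\mp g^*$ modulo the image of the other.

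The third step is to invoke that we are working with mod $2$ coefficients: then $1-g^*=1+g^*$ as endomorphisms of $M=H^i(X;\mathbb{Z}_2)$, and, setting $\tau=\sigma=1+g^*$, the above cochain complex becomes $M\xrightarrow{\tau}M\xrightarrow{\sigma}M\xrightarrow{\tau}M\xrightarrow{\sigma}\cdots$. Hence $E_2^{0,i}=H^0(\mathbb{Z}_2;M)=\ker\tau$ and $E_2^{k,i}=H^k(\mathbb{Z}_2;M)=\ker\tau/\im\sigma$ for every $k>0$, which is the asserted formula. As a consistency check, for $X$ connected one has $H^0(X)=\mathbb{Z}_2$ with trivial $g$-action, giving $E_2^{k,0}=\mathbb{Z}_2$ for all $k$, in agreement with $H^*(\mathbb{R}P^\infty;\mathbb{Z}_2)=\mathbb{Z}_2[w]$.

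I expect the only delicate point to be the identification in the first step: one must check carefully that the local system $\mathcal{H}^i(X)$ on $B_G$ produced by the Borel fibration is precisely the $\mathbb{Z}[\mathbb{Z}_2]$-module $H^i(X)$ with the action of $g^*$, i.e.\ that the monodromy of $X\hookrightarrow X_G\to B_G$ is the covering action coming from the free action of $G$ on $E_G$. Once this is in place, the remaining work is the routine homological-algebra computation above. It is also worth remarking that over $\mathbb{Z}$ the two maps $1\pm g^*$ are genuinely different, and it is exactly the reduction mod $2$ that merges $\tau$ and $\sigma$ into a single endomorphism, which is what yields the uniform description for $k>0$.
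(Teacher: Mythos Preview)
The paper does not supply its own proof of this proposition; it is quoted from Bredon \cite{bredon} as a known fact about the Leray--Serre spectral sequence of the Borel fibration, so there is no argument in the paper to compare against. Your proposal is the standard derivation and is correct: identify $E_2^{k,i}$ with $H^k(B_G;\mathcal{H}^i(X))$, use $B_G=K(\mathbb{Z}_2,1)$ to rewrite this as group cohomology $H^k(\mathbb{Z}_2;H^i(X))$, compute via the $2$-periodic resolution, and then collapse $1-g^*$ and $1+g^*$ into the single endomorphism $\tau=\sigma$ because the coefficients are $\mathbb{Z}_2$. The point you flag as delicate---that the monodromy of the Borel fibration on $H^i(X)$ is exactly the induced action $g^*$---is indeed the only thing requiring verification, and it follows from the fact that $E_G\to B_G$ is the universal cover with deck group $G$, so transport around the generator of $\pi_1(B_G)\cong G$ acts on the fibre $X$ by $g$; this is precisely how Bredon sets things up.
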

	\begin{proposition}(\cite{mac})\label{mac}
		Let $G=\mathbb{Z}_2$   act freely on a finitistic space $X.$ Then, the Borel space $X_G$ is homotopy equivalent to the orbit space $X/G.$
	\end{proposition}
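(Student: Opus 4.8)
The plan is to show that the map $\bar p\colon X_G\to X/G$ induced on orbit spaces by the projection $p\colon E_G\times X\to X$ is a homotopy equivalence, by exhibiting an equivariant homotopy inverse upstairs and then passing to quotients. First I would observe that $p$ is $G$-equivariant for the diagonal action on $E_G\times X$ and the given action on $X$, and that it does descend: since $X$ is Hausdorff and $G=\mathbb{Z}_2$ is finite, the free action is properly discontinuous (given $x$, choose disjoint open sets $U\ni x$, $V\ni gx$ and shrink $U$ to $U\cap g^{-1}V$), so $X\to X/G$ is a double cover, in particular a principal $G$-bundle, and it is numerable because $X/G$ is paracompact Hausdorff. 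The diagonal action on $E_G\times X$ is free as well, and $p$ descends to $\bar p\colon X_G=(E_G\times X)/G\to X/G$; it remains to prove $\bar p$ is a homotopy equivalence.

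Next I would build an equivariant inverse to $p$. By the universal property of $E_G$, the principal bundle $X\to X/G$ is pulled back from $E_G\to B_G$, which produces a $G$-map $f\colon X\to E_G$, unique up to $G$-homotopy. Then $s(x)=(f(x),x)$ defines a $G$-equivariant section $s\colon X\to E_G\times X$ of $p$, so $p\circ s=\mathrm{id}_X$ on the nose. For the other composite I would note that $s\circ p$ and $\mathrm{id}_{E_G\times X}$ have the same second coordinate, so a $G$-homotopy $\mathrm{id}_{E_G\times X}\simeq_G s\circ p$ is the same datum as a $G$-homotopy in the first coordinate from $p_1\colon E_G\times X\to E_G$ to $f\circ p_2$; and these two $G$-maps are $G$-homotopic precisely because $E_G\times X$ is a numerable free $G$-space, so any two $G$-maps from it into $E_G$ are $G$-homotopic (the uniqueness half of the universality of $E_G$). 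Hence $p$ is a $G$-homotopy equivalence.

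Finally, since forming orbit spaces is functorial and carries $G$-homotopies to homotopies, $\bar p\colon X_G\to X/G$ is a homotopy equivalence, with homotopy inverse induced by $s$, which is the claim. The step I expect to require the most care is purely point-set: checking that the free $\mathbb{Z}_2$-action genuinely makes $X\to X/G$ a numerable principal bundle over a paracompact Hausdorff base, so that the universality and uniqueness statements for $E_G$ may be invoked — but this is standard for free actions of finite groups on paracompact (a fortiori finitistic) Hausdorff spaces (see \cite{bredon}). As an alternative to the section argument, one may instead regard $\bar p$ as the projection of the Borel construction onto $X/G$, which realizes $X_G$ as a fibre bundle over the paracompact space $X/G$ with contractible fibre $E_G$; such a bundle is a fibration with contractible fibre, hence a weak equivalence, and the section above upgrades this to an honest homotopy equivalence.
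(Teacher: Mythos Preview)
The paper does not actually prove this proposition; it is quoted without proof as a preliminary fact from McCleary \cite{mac} (and is implicit in Bredon \cite{bredon} as well). Your argument is correct and is the standard one: the projection $E_G\times X\to X$ is a $G$-equivariant homotopy equivalence because the classifying map $f\colon X\to E_G$ of the numerable principal bundle $X\to X/G$ furnishes an equivariant section, and the uniqueness-up-to-$G$-homotopy of maps into $E_G$ gives the remaining homotopy; passing to orbits yields $X_G\simeq X/G$. The alternative you sketch---viewing $\bar p\colon X_G\to X/G$ as a fibre bundle over a paracompact base with contractible fibre $E_G$---is equally standard and is in fact how the result is usually stated in the references. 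Since the paper offers no proof to compare against, there is nothing further to contrast; your write-up would serve as a perfectly acceptable justification of the cited fact.
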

	\begin{proposition}(\cite{bredon})\label{prop 4.5}
		Let $G=\mathbb{Z}_2$   act freely on a finitistic space $X.$ If $H^i(X;\mathbb{Z}_2)=0~ \forall~ i>n,$ then $H^i(X/G;\mathbb{Z}_2)=0~ \forall~ i>n.$ 
	\end{proposition}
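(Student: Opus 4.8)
The plan is to transfer the vanishing from $X$ across the double covering $q\colon X\to X/G$ by means of its Gysin sequence, the one nontrivial input being a nilpotence statement for which the finitistic hypothesis is needed. Regard $q$ as the $S^{0}$-bundle of a real line bundle $L\to X/G$ and set $w:=w_1(L)\in H^1(X/G;\mathbb{Z}_2)$. Its Gysin sequence reads
\[
\cdots\to H^{i-1}(X/G;\mathbb{Z}_2)\xrightarrow{\,\cup w\,}H^{i}(X/G;\mathbb{Z}_2)\xrightarrow{\,q^{*}\,}H^{i}(X;\mathbb{Z}_2)\to H^{i}(X/G;\mathbb{Z}_2)\xrightarrow{\,\cup w\,}H^{i+1}(X/G;\mathbb{Z}_2)\to\cdots,
\]
and since $H^{i}(X;\mathbb{Z}_2)=0$ for $i>n$, exactness forces $\cup w\colon H^{i}(X/G;\mathbb{Z}_2)\to H^{i+1}(X/G;\mathbb{Z}_2)$ to be onto for $i\ge n$ and bijective for $i>n$. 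Hence multiplication by any power of $w$ identifies $H^{n+1}(X/G;\mathbb{Z}_2)$ with $H^{n+1+k}(X/G;\mathbb{Z}_2)$ for all $k\ge0$, and therefore the whole proposition reduces to showing that $w$ is nilpotent: if $w^{N}=0$, then $\cup w^{N}$ on $H^{n+1}(X/G;\mathbb{Z}_2)$ is simultaneously the zero map and an isomorphism, so $H^{n+1}(X/G;\mathbb{Z}_2)=0$ and then $H^{i}(X/G;\mathbb{Z}_2)=0$ for every $i\ge n+1$.

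To prove nilpotence of $w$ I would pass to the Borel construction. By Proposition \ref{mac} the action being free gives $X/G\simeq X_G$, and under this equivalence $w=\pi^{*}(t)$ with $t$ the generator of $H^{1}(B_G;\mathbb{Z}_2)$ ($B_G\simeq\mathbb{R}P^{\infty}$, so $H^{*}(B_G;\mathbb{Z}_2)=\mathbb{Z}_2[t]$). In the Leray–Serre spectral sequence of $X\hookrightarrow X_G\xrightarrow{\pi}B_G$ (Proposition \ref{2.4}, or Proposition \ref{nontrivial} when $\pi_1(B_G)$ acts nontrivially on $H^{*}(X;\mathbb{Z}_2)$) the class $t\in E_2^{1,0}$ is a permanent cycle; thus the spectral sequence, and so $H^{*}(X_G;\mathbb{Z}_2)$, is a module over $\mathbb{Z}_2[t]$ with $t$ acting as $\cup w$, and because $H^{q}(X;\mathbb{Z}_2)=0$ for $q>n$ the $E_2$-page, hence $E_\infty$, is concentrated in the rows $0\le q\le n$. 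Now $\bigoplus_{p}E_\infty^{p,0}=\operatorname{im}\bigl(\pi^{*}\colon H^{*}(B_G;\mathbb{Z}_2)\to H^{*}(X_G;\mathbb{Z}_2)\bigr)=\mathbb{Z}_2[w]$, and the point — this is the substantive content of the theory of free actions on finitistic spaces, for which I would cite \cite{bredon} — is that for a \emph{free} action this subring must be finite dimensional over $\mathbb{Z}_2$, i.e. $w^{N}=0$ for some $N$. (Equivalently, one may simply quote from \cite{bredon} the inequality $\operatorname{cd}_{\mathbb{Z}_2}(X/G)\le\operatorname{cd}_{\mathbb{Z}_2}(X)$ for free $\mathbb{Z}_2$-actions on finitistic spaces, of which the proposition is exactly the case $\operatorname{cd}_{\mathbb{Z}_2}(X)\le n$.)

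Combining the two steps completes the argument: nilpotence of $w$ together with the Gysin consequences of the first paragraph gives $H^{i}(X/G;\mathbb{Z}_2)=0$ for all $i>n$. I expect the isolated hard point to be the finiteness assertion of the second paragraph — everything else is the formal exactness bookkeeping of the Gysin sequence — and it is precisely there, and only there, that finitisticness of $X$ is used.
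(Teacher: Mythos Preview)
The paper does not prove this proposition; it is simply cited from \cite{bredon}. So there is no argument in the paper to compare yours against.

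Your Gysin-sequence reduction is correct and clean: once $H^i(X;\mathbb{Z}_2)=0$ for $i>n$, exactness makes $\cup\, w$ surjective in degrees $\ge n$ and bijective in degrees $>n$, so nilpotence of $w$ forces $H^{n+1}(X/G;\mathbb{Z}_2)=0$ and hence all higher groups vanish. You also correctly isolate the one substantive input as the nilpotence of $w$, and that finitisticness is needed precisely there.

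Two remarks. First, the parenthetical alternative you offer --- quoting $\operatorname{cd}_{\mathbb{Z}_2}(X/G)\le\operatorname{cd}_{\mathbb{Z}_2}(X)$ from \cite{bredon} --- is not an alternative: that inequality \emph{is} the proposition, so invoking it is circular. Drop that sentence. Second, the line $\bigoplus_p E_\infty^{p,0}=\operatorname{im}(\pi^*)=\mathbb{Z}_2[w]$ is sloppy: the image of $\pi^*$ is the subring generated by $w$, which equals $\mathbb{Z}_2[w]$ only if $w$ is \emph{not} nilpotent --- the opposite of what you are trying to show. Write it as $\mathbb{Z}_2[t]/\ker\pi^*$ and argue that $\ker\pi^*\neq 0$. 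The fact you actually need from \cite{bredon} is that for a fixed-point-free $\mathbb{Z}_2$-action on a finitistic space with $H^{>n}(X;\mathbb{Z}_2)=0$, the edge map $\pi^*$ is not injective; this is a genuine theorem (proved there via the finitistic localization machinery) and is a priori weaker than the proposition, since it only gives $w^N=0$ for \emph{some} $N$. Your Gysin bookkeeping is precisely what upgrades it to the sharp bound $N=n+1$, so the argument does carry content beyond the bare citation.
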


	\begin{proposition}\label{prop 5}(\cite{bredon})
		Let $G=\mathbb{Z}_2$ act on a finitistic space $X.$  Suppose that $H^i(X; \mathbb{Z}_2)=0$ for all $i>2n$ and $H^{2n}(X;\mathbb{Z}_2)=\mathbb{Z}_2.$ If there exists an element $a \in H^{n}(X;\mathbb{Z}_2)$ such that $ag^*(a)\neq 0,$ where  $g$ be a generator of $G,$ then fixed point set is nonempty.
	\end{proposition}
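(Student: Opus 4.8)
The plan is to prove the contrapositive: assuming $X^G=\emptyset$ — so that, $G$ being $\mathbb{Z}_2$, the action is free — I will show that $a\,g^*(a)=0$ for every $a\in H^n(X;\mathbb{Z}_2)$. By Proposition~\ref{mac} we have $X_G\simeq X/G$, and by Proposition~\ref{prop 4.5}, $H^i(X_G;\mathbb{Z}_2)=0$ for all $i>2n$. The main tool is the Gysin sequence of the double cover $E_G\times X\to X_G$ (a principal $\mathbb{Z}_2$-bundle whose total space is homotopy equivalent to $X$, classified by $t=\pi^*(c)\in H^1(X_G;\mathbb{Z}_2)$ with $c$ the generator of $H^1(B_G;\mathbb{Z}_2)$). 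Viewing this as the $S^0$-bundle of the associated line bundle, the Gysin sequence with $\mathbb{Z}_2$-coefficients reads
\[
\cdots\to H^{k-1}(X_G)\xrightarrow{\ \cup t\ }H^{k}(X_G)\xrightarrow{\ i^*\ }H^{k}(X)\xrightarrow{\ \delta\ }H^{k}(X_G)\xrightarrow{\ \cup t\ }H^{k+1}(X_G)\to\cdots,
\]
where $i^*$ is restriction to the fibre and $\delta$ is the transfer of the cover. I will use three properties of this setup: the projection formula $\delta(x\cup i^*y)=\delta(x)\cup y$; the identity $i^*\delta=1+g^*$ on $H^*(X;\mathbb{Z}_2)$ (which comes from the $G$-equivariant projection $E_G\times X\to X$); and — the point on which the argument really turns — the fact that $\delta$ commutes with the Steenrod squares, $\delta\circ\mathrm{Sq}^{j}=\mathrm{Sq}^{j}\circ\delta$.

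I would first determine $H^{2n}(X_G;\mathbb{Z}_2)$. Taking $k=2n$ and using $H^{2n+1}(X_G;\mathbb{Z}_2)=0$, exactness shows that $\delta\colon H^{2n}(X;\mathbb{Z}_2)\to H^{2n}(X_G;\mathbb{Z}_2)$ is surjective, while if $H^{2n}(X_G;\mathbb{Z}_2)$ vanished then $\ker\delta=H^{2n}(X;\mathbb{Z}_2)$ would equal $\operatorname{im}\bigl(i^*|_{H^{2n}(X_G)}\bigr)=0$, contradicting $H^{2n}(X;\mathbb{Z}_2)=\mathbb{Z}_2$. Hence $H^{2n}(X_G;\mathbb{Z}_2)=\mathbb{Z}_2$ and $\delta$ is an isomorphism in degree $2n$. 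Now let $a\in H^{n}(X;\mathbb{Z}_2)$ with $a\,g^*(a)\neq 0$; then $a\,g^*(a)$ generates $H^{2n}(X;\mathbb{Z}_2)$, so $\delta\bigl(a\,g^*(a)\bigr)\neq 0$. Put $b=\delta(a)\in H^{n}(X_G;\mathbb{Z}_2)$, so $i^*(b)=a+g^*(a)$ and therefore $a\cdot i^*(b)=a^2+a\,g^*(a)$. Applying $\delta$ and the projection formula gives $\delta(a^2)+\delta\bigl(a\,g^*(a)\bigr)=\delta(a)\cup b=b^2$. On the other hand $\deg a=n$ forces $a^2=\mathrm{Sq}^{n}(a)$, whence $\delta(a^2)=\delta(\mathrm{Sq}^{n}a)=\mathrm{Sq}^{n}\bigl(\delta(a)\bigr)=\mathrm{Sq}^{n}(b)=b^2$. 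Comparing the two expressions (over $\mathbb{Z}_2$) yields $\delta\bigl(a\,g^*(a)\bigr)=0$, a contradiction. Therefore $X^G\neq\emptyset$.

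The only non-formal ingredient here is the commutation of the transfer $\delta$ with the Steenrod squares, which I regard as the heart of the argument; I would deduce it from the fact that $\delta$ is induced by the stable (Becker–Gottlieb) transfer $\Sigma^{\infty}_{+}X_G\to\Sigma^{\infty}_{+}X$ of the covering, so that it commutes with all stable cohomology operations. The remaining points — the exact form of the Gysin sequence, the identification of $i^*$ with fibre restriction, and the identification of the deck transformation with $g$ (giving $i^*\delta=1+g^*$) — are routine. Note that this approach is uniform: it requires no separate treatment of the cases where $\pi_1(B_G)$ does or does not act trivially on $H^*(X;\mathbb{Z}_2)$.
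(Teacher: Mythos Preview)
The paper does not supply a proof of this proposition; it is quoted from Bredon's book \cite{bredon} (Chapter~VII) and used as a black box. So there is no ``paper's own proof'' to compare against.

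Your argument is correct. The computation
\[
\delta\bigl(a\,g^*(a)\bigr)=\delta\bigl(a\cdot i^*b\bigr)+\delta(a^2)=b^2+\mathrm{Sq}^{n}(b)=0
\]
is exactly the mechanism behind Bredon's result, and in fact your identity $\delta\bigl(a\,g^*(a)\bigr)=0$ holds without assuming freeness; by exactness it shows $a\,g^*(a)\in\operatorname{im}i^*$, which is precisely Proposition~\ref{prop 6}. The free hypothesis enters only to force $\delta$ to be injective in degree $2n$ (via $H^{2n+1}(X_G)=0$ and the little exactness argument you gave), turning $\delta\bigl(a\,g^*(a)\bigr)=0$ into $a\,g^*(a)=0$. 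All the ingredients you invoke are standard: the Gysin sequence of the $S^0$-bundle $E_G\times X\to X_G$, the projection formula for the transfer, the relation $i^*\delta=1+g^*$, and the commutation of the transfer with Steenrod operations (which, as you say, follows from the transfer being induced by a stable map). One cosmetic remark: once you know $\delta$ is surjective in degree $2n$ with source $\mathbb{Z}_2$, the dichotomy $H^{2n}(X_G)\in\{0,\mathbb{Z}_2\}$ is immediate, so the separate exclusion of the zero case can be phrased in one line.
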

	\begin{proposition}\label{prop 6}(\cite{bredon})
		Let $G=\mathbb{Z}_2$ act on a finitistic space $X.$ Then, for any $a \in H^n(X),$ the element  $ag^*(a)$ is permanent cocycle in spectral sequence of $X_G \rightarrow B_G.$
	\end{proposition}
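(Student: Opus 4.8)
The plan is to realise $a g^*(a)$ as the fibre restriction of an honest cohomology class on the Borel space, which forces it to survive every differential. Recall that no differential maps into the bottom column $E_r^{0,*}$, so $E_{r+1}^{0,q}=\ker\big(d_r\colon E_r^{0,q}\to E_r^{r,q-r+1}\big)$ and hence $E_\infty^{0,q}$ coincides, as a subgroup of $H^q(X)$, with the image of the edge homomorphism $i^*\colon H^q(X_G)\to H^q(X)$; thus an element of $E_2^{0,q}$ is a permanent cocycle if and only if it lies in $\im i^*$. Since $g^*\big(a g^*(a)\big)=a g^*(a)$, the class $a g^*(a)$ does lie in $E_2^{0,2n}$, and it suffices to produce $\xi\in H^{2n}(X_G;\mathbb{Z}_2)$ with $i^*(\xi)=a g^*(a)$.

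First I would pass to the interchange action on $X\times X$. Let $G=\{1,g\}$ act on $X\times X$ by $T(u,v)=(v,u)$, and let $\sigma\colon X\to X\times X$ be $\sigma(x)=(x,gx)$. Then $\sigma(gx)=(gx,x)=T\sigma(x)$, so $\sigma$ is $G$-equivariant; it therefore induces a map $\sigma_G\colon X_G\to (X\times X)_G$ over $B_G$, hence a morphism of the corresponding Leray--Serre spectral sequences which on fibres is $\sigma^*\colon H^*(X\times X)\to H^*(X)$. Writing $p_1,p_2$ for the two projections of $X\times X$, we have $p_1\sigma=\mathrm{id}_X$ and $p_2\sigma=g$, so $\sigma^*\big(p_1^*(a)\smile p_2^*(a)\big)=a\smile g^*(a)=a g^*(a)$. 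Hence, if $p_1^*(a)\smile p_2^*(a)=i^*(\eta)$ for some $\eta\in H^{2n}\big((X\times X)_G;\mathbb{Z}_2\big)$, then $\xi:=\sigma_G^*(\eta)$ has $i^*(\xi)=a g^*(a)$, and $a g^*(a)$ is a permanent cocycle.

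It remains to show that the external square $p_1^*(a)\smile p_2^*(a)\in H^{2n}(X\times X;\mathbb{Z}_2)$ extends to a class on the Borel space $(X\times X)_G=(X\times X)\times_{\mathbb{Z}_2}E_G$ of the interchange action. This is exactly the Steenrod power construction: from a cocycle $u$ representing $a$ one uses the $\smile_i$-products to build a $\mathbb{Z}_2$-equivariant cochain on $C_*(X\times X)\otimes C_*(E_G)$ restricting to $u\otimes u$ on the fibre, and the mod-$2$ coboundary formulas for the $\smile_i$-products together with the contractibility of $E_G$ show that this cochain is a cocycle; the resulting class $\eta$ (the total power operation associated to $a$) restricts to $p_1^*(a)\smile p_2^*(a)$ on the fibre. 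See \cite{bredon}. Taking this $\eta$ finishes the proof.

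The main obstacle is precisely this last step: constructing the equivariant extension $\eta$ is not formal, and it is here that working mod $2$ and the $T^*$-symmetry of $p_1^*(a)\smile p_2^*(a)$ are used; the rest is naturality of the Borel fibration and manipulation of edge homomorphisms. Alternatively, one may invoke the known description of $H^*\big((X\times X)\times_{\mathbb{Z}_2}E_G;\mathbb{Z}_2\big)$ as a module over $H^*(B_G;\mathbb{Z}_2)$, in which $p_1^*(a)\smile p_2^*(a)$ is manifestly a permanent cocycle, and then transport this along $\sigma_G^*$ as above.
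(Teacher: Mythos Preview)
The paper does not supply a proof of this proposition; it is quoted from Bredon's book \cite{bredon} as a known result. Your argument is correct and is essentially the classical proof one finds there (Chapter~VII): realise $a\,g^*(a)$ as the image under the edge map $i^*$ by pulling back, along the equivariant diagonal $\sigma(x)=(x,gx)$, the Steenrod power class $P(a)\in H^{2n}\big((X\times X)\times_{\mathbb{Z}_2}E_G\big)$ whose fibre restriction is $a\times a$. The one nontrivial input, as you correctly flag, is the existence of $P(a)$, i.e.\ the quadratic construction underlying the Steenrod squares; once that is granted the rest is naturality of the Borel fibration and the identification of $E_\infty^{0,*}$ with $\im i^*$.
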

	
	\noindent Recall  that 
	$H^*(\mathbb{S}^n \times \mathbb{S}^m \times \mathbb{S}^l; \mathbb{Z}_2)=\mathbb{Z}_2[a,b,c]/<{a^{2},b^2, c^2}>,$ where deg $a=n,$ deg $b=m$ and  deg $c=l, ~1 \leq n \leq m\leq l.$\\
	\noindent Throughout the paper,  
	$H^*(X)$ will denote the \v{C}ech cohomology of a space $X$ with coefficient group $G=\mathbb{Z}_2,$ and   $X\sim_2 Y,$  means $H^*(X;\mathbb{Z}_2 )\cong H^*(Y;\mathbb{Z}_2).$

	\section{The  Cohomology  Algebra of  The Orbit Space of  Free involutions on $ \mathbb{S}^n \times \mathbb{S}^m \times \mathbb{S}^l $}
	\noindent In this section, we determine the cohomology ring of free involutions on finitistic spaces $X$ whose cohomology ring is isomorphic to the product of three spheres, $X\sim_2 \mathbb{S}^n \times \mathbb{S}^m \times \mathbb{S}^l, $ where $n \leq  m\leq l.$ We first prove the following lemma.
	\begin{lemma}
		Let $G = \mathbb{Z}_2$ act freely on a finitistic space $X\sim_2 \mathbb{S}^n \times \mathbb{S}^m \times \mathbb{S}^l, $ where $n < m< l.$ Then, $\pi_1(B_G)$ acts trivially on $H^*(X).$ 
	\end{lemma}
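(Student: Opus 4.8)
The plan is to show that the generator $g$ of $G$ induces the identity on $H^*(X)\cong \mathbb{Z}_2[a,b,c]/\langle a^2,b^2,c^2\rangle$, since the monodromy action of $\pi_1(B_G)\cong\mathbb{Z}_2$ on the fibre cohomology of the Borel fibration $X\hookrightarrow X_G\to B_G$ is exactly the action generated by $g^*$ (this is the convention already in force in Propositions \ref{2.4} and \ref{nontrivial}).

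The first step is to pin down $g^*$ on the algebra generators $a$ (degree $n$) and $b$ (degree $m$) by a dimension count. Since $g^*$ is a graded ring automorphism with $(g^*)^2=\mathrm{id}$, and $1\le n<m<l$, the only monomial $a^ib^jc^k$ ($i,j,k\in\{0,1\}$) of degree $n$ is $a$ itself (the factors $b,c$ sit in strictly higher degrees), so $H^n(X)=\mathbb{Z}_2\langle a\rangle$ and $g^*(a)=a$; likewise, because $a^2=0$, the only monomial of degree $m$ is $b$, hence $g^*(b)=b$. In particular $g^*(ab)=ab$.

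The only remaining freedom is in $g^*(c)\in H^l(X)$. If $l\ne n+m$, then $H^l(X)=\mathbb{Z}_2\langle c\rangle$ and $g^*(c)=c$ is forced. The delicate case — and the main obstacle — is $l=n+m$, where $H^l(X)=\mathbb{Z}_2\langle c,ab\rangle$ and a priori $g^*(c)\in\{c,\,ab,\,c+ab\}$. I would rule out $g^*(c)=ab$ using $(g^*)^2=\mathrm{id}$: that would give $ab=g^*(ab)=(g^*)^2(c)=c$, absurd. To exclude $g^*(c)=c+ab$ I would invoke freeness of the action: in that case $c\cdot g^*(c)=c(c+ab)=abc\ne 0$ in $H^{n+m+l}(X)=H^{2l}(X)\cong\mathbb{Z}_2$, and since $H^i(X)=0$ for $i>2l$, Proposition \ref{prop 5} (applied with $2l$ in the role of ``$2n$'' and the element $c\in H^l(X)$) would yield a nonempty fixed point set, contradicting freeness. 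Hence $g^*(c)=c$ in every case.

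Combining these, $g^*$ fixes each of $a,b,c$ and therefore acts as the identity on all of $H^*(X)$; equivalently, $\pi_1(B_G)$ acts trivially on $H^*(X)$, which is the assertion of the lemma. The one routine verification left implicit above is the enumeration of monomials in each relevant degree, which is immediate from $1\le n<m<l$ together with $a^2=b^2=c^2=0$.
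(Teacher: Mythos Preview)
Your proof is correct and follows essentially the same approach as the paper: a degree count forces $g^*(a)=a$ and $g^*(b)=b$, and in the only nontrivial case $l=n+m$ the possibility $g^*(c)=c+ab$ is excluded via Proposition~\ref{prop 5} since $c\,g^*(c)=abc\neq 0$. You have simply made explicit the steps the paper summarizes as ``obviously'' and ``we must have $g^*(c)=ab+c$.''
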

	\begin{proof}
		For $l \neq m+n,$ obviously, $\pi_1(B_G)$ acts trivially on $H^*(X).$ Next, suppose $l=m+n.$ If $\pi_1(B_G)$ acts nontrivially on $H^*(X),$ then we must have  $g^*(c)=ab+c,$ where $g$ is the generator of $\pi_1(B_G).$ So, we get $cg^*(c)=abc \neq 0,$ which contradicts Proposition $\ref{prop 5}.$ Hence, our claim.
	\end{proof}
	Note that $G=\pi_1(B_G)$ may act nontrivially on $H^*(X)$ in the following two cases: (i) $X\sim_2 \mathbb{S}^n \times \mathbb{S}^m \times \mathbb{S}^m, n<m,$ and (ii) $X\sim_2 \mathbb{S}^n \times \mathbb{S}^n \times \mathbb{S}^l,n\leq l.$ First, we determine the cohomology algebra of free involutions on $X\sim_2 \mathbb{S}^n \times \mathbb{S}^m \times \mathbb{S}^l$ in the case when $\pi_1(B_G)$ acts nontrivially on $H^*(X).$ We have  proved the following Theorems.
	\begin{theorem}\label{thm 3.2}
		Let $G = \mathbb{Z}_2$ act freely on a finitistic space $X\sim_2 \mathbb{S}^n \times \mathbb{S}^m \times \mathbb{S}^m ,$ where $n < m.$ If $\pi_1(B_G)$ acts nontrivially on $H^*(X),$ then $H^*(X/G)$ is isomorphic to the following graded commutative algebra:
		$${\mathbb{Z}_2[x,y,w,z]}/{<x^{n+1},y^2+a_0z,w^2,z^2,yw+a_1x^nz,yz,wz,xy,xw>},$$ where deg $x=1,$ deg $y=m$, deg $w=m+n,$ deg $z=2m$ and $a_0,a_1 \in \mathbb{Z}_2.$
	\end{theorem}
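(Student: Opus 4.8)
The plan is to realise $X/G$ as a Borel construction and run the Leray--Serre spectral sequence. By Proposition \ref{mac}, $X/G\simeq X_G$, so it suffices to compute $H^*(X_G)$ for the Borel fibration $X\hookrightarrow X_G\xrightarrow{\pi}B_G$, where $B_G=\mathbb{R}P^{\infty}$ and $H^*(B_G)=\mathbb{Z}_2[t]$ with $\deg t=1$. I would first pin down the nontrivial $\pi_1(B_G)$-action on $H^*(X)=\mathbb{Z}_2[a,b,c]/\langle a^2,b^2,c^2\rangle$: since $\deg a=n<m=\deg b=\deg c$, the degree-preserving ring automorphism $g^*$ fixes $H^0$ and $H^n$, so it can be nontrivial only on $H^m=\langle b,c\rangle$ (and consequently on $H^{n+m}=\langle ab,ac\rangle$). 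A nontrivial order-two $\mathbb{F}_2$-linear endomorphism of a $2$-dimensional space is a single Jordan block, and a change of basis of $H^m$ leaves the relations $b^2=c^2=0$, $bc\neq 0$ intact; so after relabelling we may take $g^*(a)=a$, $g^*(b)=b$, $g^*(c)=b+c$, which determines $g^*$ throughout.

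With $\tau=\sigma=1+g^*$ one computes $\ker\tau=\langle 1,a,b,ab,bc,abc\rangle$ and $\im\sigma=\langle b,ab\rangle$, hence $\ker\tau/\im\sigma=\langle[1],[a],[bc],[abc]\rangle$; Proposition \ref{nontrivial} then yields
\[
E_2^{0,i}=\mathbb{Z}_2\ (i\in\{0,n,m,n+m,2m,n+2m\}),\qquad E_2^{k,i}=\mathbb{Z}_2\ (k\geq 1,\ i\in\{0,n,2m,n+2m\}),
\]
all other entries being zero. Multiplication by $t$ makes the rows $i=0,n,2m,n+2m$ into infinite ``towers'', while the rows $i=m$ and $i=n+m$ each consist of a single class in the fiber column $k=0$.

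The main work is the differential analysis. Since the action is free, Proposition \ref{prop 4.5} forces $H^j(X/G)=0$ for $j>n+2m$, so $H^*(X_G)$ is finite-dimensional and all four towers must be truncated. The classes $t\in E_2^{1,0}$ and $bc=c\,g^*(c)\in E_2^{0,2m}$ are permanent cocycles, the latter by Proposition \ref{prop 6}, and combined with the Leibniz rule this is very restrictive: the only differential capable of truncating the tower at $i=n$ is $d_{n+1}(a)=t^{n+1}$, and once this is nonzero the Leibniz rule forces $d_{n+1}(abc)=t^{n+1}\,bc\neq 0$, so the same $d_{n+1}$ also kills the towers at $i=n,\ n+2m$ and truncates those at $i=0,\ 2m$ to $0\leq k\leq n$. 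One then checks that after $E_{n+2}$ no further differentials survive — in particular nothing enters or leaves the rows $i=m,\ n+m$, which persist (that $H^m(X/G)\neq 0$, ruling out the potential differential $d_{m-n+1}$ out of $E^{0,m}$, can be seen from the Gysin sequence of the double cover $X\to X/G$) — so $E_\infty^{k,0}=E_\infty^{k,2m}=\mathbb{Z}_2$ for $0\leq k\leq n$, $E_\infty^{0,m}=E_\infty^{0,n+m}=\mathbb{Z}_2$, and nothing else. I expect this to be the main obstacle: one must exclude stray differentials and, above all, treat uniformly the coincidences among $n,m,n+m,2m$ (such as $m=2n$, $m=n+1$, $n=1$) where towers and isolated classes occupy neighbouring rows.

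Finally I would reconstruct the algebra. Put $x=\pi^*(t)$, so $x^{n+1}=0$ since $E_\infty^{n+1,0}=0$; choose $y\in H^m(X_G)$ and $w\in H^{n+m}(X_G)$ representing the generators of $E_\infty^{0,m}$ and $E_\infty^{0,n+m}$ (so $i^*(y)=b$, $i^*(w)=ab$), and $z\in H^{2m}(X_G)$ representing $E_\infty^{0,2m}=\langle bc\rangle$, so that $x^iz$ represents $E_\infty^{i,2m}$ for $0\leq i\leq n$. Because $E_\infty^{*,m}$ and $E_\infty^{*,n+m}$ are concentrated in column $0$ one gets $xy=xw=0$, and $yz=wz=z^2=w^2=x^{n+1}z=0$ follow by degree reasons from the shape of $E_\infty$ (these products land in degrees where $H^*(X_G)$ vanishes, or in vanishing filtration steps). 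The only products not forced to be zero are $y^2\in H^{2m}(X_G)=\langle z\rangle$ and $yw\in H^{n+2m}(X_G)=\langle x^nz\rangle$, giving $y^2=a_0z$ and $yw=a_1x^nz$ with $a_0,a_1\in\mathbb{Z}_2$ pinned down by the multiplicative extension (applying $i^*$, which annihilates $x$, supplies the remaining constraints). Assembling these relations produces exactly the presentation in the statement.
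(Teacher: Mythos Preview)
Your proposal is correct and follows essentially the same approach as the paper: compute the $E_2$-page via Proposition~\ref{nontrivial}, use that $bc$ is a permanent cocycle (Proposition~\ref{prop 6}) together with Proposition~\ref{prop 4.5} to force $d_{n+1}(a)=t^{n+1}$, read off $E_\infty$, and resolve the multiplicative extensions. Your normalisation of the nontrivial involution to $g^*(b)=b$, $g^*(c)=b+c$ (via a basis change of $H^m$, which over $\mathbb{F}_2$ automatically preserves the relations $b^2=c^2=0$, $bc\neq 0$) is equivalent to the paper's choice $g^*(b)=c$, $g^*(c)=b$; the paper instead checks the three nontrivial actions separately and observes they yield the same answer.

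One point where you are in fact \emph{more} careful than the paper: you flag the possible early differential $d_{m-n+1}\colon E^{0,m}\to E^{m-n+1,n}$ (relevant when $m\le 2n$), which the paper passes over with ``Clearly, $d_r=0$ for all $r>n+1$'' without discussing $r\le n$. Your Gysin argument is valid --- if $H^m(X/G)=0$ then the composite $p^*p_! = 1+g^*$ on $H^m(X)$ would vanish, contradicting nontriviality --- though an alternative, purely spectral-sequence argument is available: since $d_{n+1}(a)=t^{n+1}$ is forced, one checks (using $d^2=0$ when $m=2n$, or the multiplicative structure together with $t^{m+2}\neq 0$ in $E_{n+1}^{m+2,0}$ when $m<2n$) that $d_{m-n+1}$ on $E^{0,m}$ must vanish. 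Either way the gap is easily filled. Note also that your parenthetical about $i^*$ actually forces $a_0=0$ (since $i^*(y^2)=b^2=0$ while $i^*(z)=bc\neq 0$ and $H^{2m}(X_G)\cong\mathbb{Z}_2$); this is consistent with the statement, which merely asserts $a_0\in\mathbb{Z}_2$ without claiming both values occur.
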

	\begin{proof}
		It is clear that $\pi_1(B_G)$ acts nontrivially on $H^m(X).$ Let  $g$ be a generator of $\pi_1(B_G).$ We have exactly three possibilities of  nontrivial actions: (1) $g^*(b)=c~\&$ $g^*(c)=b,$ (2) $g^*(b)=b~\&$ $g^*(c)=b+c$ and (3) $g^*(b)=b+c~\&$ $g^*(c)=c.$ First, consider nontrivial action defined by $g^*(b)=c~\&~g^*(c)=b.$  By the natuality of cup product, we get $\pi_1(B_G)$ acts nontrivially on $H^{m+n}(X).$ Note that $\sigma=\tau=1+g^*.$ By Proposition \ref{nontrivial}, we get $E_2^{0,i}\cong \mathbb{Z}_2$ and  $E_2^{k,i}= 0~~~\forall~~k>0, ~~ i=m,m+n.$ For  $i\neq m,m+n$, $\pi_1(B_G)$ acts trivially on $H^i(X),$ and hence, $E_2^{k,i} \cong H^k(B_G) \otimes H^i(X).$  By Proposition \ref{prop 6}, $ 1 \otimes bc$ is a permanent cocycle. If $d_{n+1}(1\otimes a)=0,$ then atleast two lines survives to $E_\infty$ which is not possible. Therefore, we get $d_{n+1}(1 \otimes a )= t^{n+1}\otimes 1,$ and $d_{n+1}(1 \otimes abc)=t^{n+1}\otimes bc.$ Clearly, $d_r =0 ~~ \forall~~ r > n+1.$ Thus, $E_{n+2}^{*,*}\cong E_{\infty}^{*,*}.$ So, we have  $E_{\infty}^{p,q}\cong\mathbb{Z}_2$ where $0\leq p \leq n,q=0,2m; p=0,q=m,n+m.$ Consequently, the cohomology groups  are given by  
		\[
		H^k(X_G)\cong\bigoplus_{p+q=k}{E_{\infty}^{p,q}}=
		\begin{cases}
			\mathbb{Z}_2  & j \leq k \leq n+j, j =0,2m; k=m,m+n,\\
			0 & \text{otherwise.}
		\end{cases}
		\]
		The permanent cocycles $t\otimes 1,$ $b+c$, $ab+ac$ and  $1 \otimes bc$  are determine the elements $x \in E_{\infty}^{1,0},$ $u \in E_{\infty}^{0,m},$ $v\in E_{\infty}^{0,n+m}$ and $s \in E_{\infty}^{0,2m},$ respectively. Thus, the total  complex Tot$E_{\infty}^{*,*}$ is given by $${\mathbb{Z}_2[x,u,v,s]}/{<x^{n+1},u^2+a_0s,v^2,s^2,uv,us,vs,xu,xv>},$$
		where deg$~x=1,$ deg$~u=m,$ deg$~v=m+n,$ deg$~s=2m, $ and $a_0 \in \mathbb{Z}_2.$ Let $y \in H^{m}(X_G),w \in H^{m+n}(X_G)$ and $z \in H^{2m}(X_G)$ such that $i^*(y)=b+c,i^*(w)=ab+ac$ and $i^*(z)=bc,$ respectively. Clearly, we have $yw+a_1x^{n}z=0, a_1 \in \mathbb{Z}_2$.  By Proposition \ref{mac},  the graded commutative algebra $ H^*(X/G)$ given by
		$${\mathbb{Z}_2[x,y,w,z]}/{<x^{n+1},y^2+a_0z,{w}^2,z^2,yw+a_1x^nz,yz,wz,xy,xw>},$$
		where deg$~x=1,$ deg$~y=m,$ deg$ ~w=m+n,$ deg$~z=2m, $ and $a_0,a_1 \in \mathbb{Z}_2.$  
		For the other two  nontrivial actions of $\pi_1(B_G)$ on $H^m(X),$ we get the same cohomology algebra.
	\end{proof}
	\begin{remark}
		In the above theorem, if $a_0=a_1=0,$ then  $X/G \sim_2 (\mathbb{RP}^n \times S^{2m}) \vee \mathbb{S}^m \vee \mathbb{S}^{m+n}.$
	\end{remark}
	Now, for a finitistic space $X\sim_2 \mathbb{S}^n \times \mathbb{S}^n \times \mathbb{S}^l ,$  $n\leq l,$ we have following Theorem.
	\begin{theorem}{\label{thm3.4}}
		Let $G = \mathbb{Z}_2$ act freely on a finitistic space $X\sim_2 \mathbb{S}^n \times \mathbb{S}^n \times \mathbb{S}^l ,$ where $n\leq l.$ If $\pi_1(B_G)$ acts nontrivially on $H^*(X),$ then $H^*(X/G)$ is isomorphic to one of the following graded commutative algebras:
		\begin{enumerate}
			\item  $\mathbb{Z}_2[x,y,w,z]/<x^{2n+l+1},I_j,z^2,xy,zx,x^{l-2n+1}w>_{1\leq j \leq 4},$ \\where  deg $x=1,$ deg $y=n,$ deg $w=2n~\&$ deg $z=n+l;$
			$I_1=y^2+a_1x^{2n}+a_2w,I_2={w}^2+a_3x^{4n}+a_4x^{2n}w+a_5z,I_3=yw+a_6x^{3n}+a_7x^nw~\&~I_4=yz+a_8x^{2n+l},$ $a_i \in \mathbb{Z}_2, 1 \leq i \leq 8;$  $a_3=0$ if $l<2n,a_4=0$ if $l<4n, a_5=0$ if $l\neq 3n~\&$ $ a_7=0$ if $l<3n.$ 	
			\item  $\mathbb{Z}_2[x,y,w,z]/<x^{l+1},I_j,z^2,xy,zx>_{1\leq j \leq 4},$ \\where  deg $x=1,$ deg $y=n,$ deg $w=2n~\&$ deg $z=n+l;$ 
			$I_1=y^2+a_1x^{2n}+a_2w,I_2={w}^2+a_3x^{4n}+a_4x^{2n}w+a_5z,I_3=yw+a_6x^{3n}+a_7x^nw+a_8z~\&~I_4=yz+a_9x^{l}w,$ $a_i \in \mathbb{Z}_2, 1 \leq i \leq 9;$ $a_1=a_4=0$ if $l<2n,a_3=0$ if $l<4n, a_5=0$ if $l\neq 3n,$$ a_6=0$ if $l<3n~\&~ a_8=0$ if $l \neq 2n.$
			
			\item ${\mathbb{Z}_2[x,y,w,z]}/{<x^{n+1},I_j,w^2,z^2,wz,xz,xy>_{1\leq j \leq 3}},$ \\
			where deg $x=1,$ deg $y=n$ $\&$ deg $w$= deg $z=2n;$ $I_1=y^2+a_1w+a_2z,I_2=yw+a_3x^{n}w~\&~I_2=yz+a_4x^{n}w,$ $a_i \in \mathbb{Z}_2, 1 \leq i \leq 4.$ 
			
		\end{enumerate}
	\end{theorem}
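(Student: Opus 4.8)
The plan is to run the Leray--Serre spectral sequence of the Borel fibration $X \hookrightarrow X_G \to B_G$ with $\mathbb{Z}_2$-coefficients, where $B_G\simeq\mathbb{RP}^\infty$ and $H^*(B_G)=\mathbb{Z}_2[t]$, and then to transport the answer to $X/G$ via Proposition \ref{mac}. I would begin by listing the possible module structures on $H^*(X)$. Since $g^*$ has order two and we work in characteristic $2$, $(1+g^*)^2=0$ on each $H^j(X)$, so $1+g^*$ is nilpotent of order $\le 2$ there; hence every $H^j(X)$ is a direct sum of $\mathbb{Z}_2[G]$-modules of dimension $\le 2$, we have $H^j(X)^G\neq 0$ whenever $H^j(X)\neq0$, and, since $g^*$ is an algebra automorphism of $\mathbb{Z}_2[a,b,c]/(a^2,b^2,c^2)$, a nontrivial action is, up to an algebra automorphism, one of finitely many types: for $l=n$ it permutes or transvects $a,b,c$ in degree $n$, while for $l>n$ it is a transposition or a transvection of $\langle a,b\rangle\subseteq H^n(X)$, possibly together, when $l=2n$, with a shear $c\mapsto c+ab$ in degree $2n$. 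The three families (1)--(3) arise from these alternatives, (3) being the case $l=n$.

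By Proposition \ref{nontrivial} the $E_2$-page is then forced: in each degree $j$ where $g^*$ acts trivially the row is the full row $H^*(B_G)\otimes H^j(X)$, whereas in a degree with nontrivial action it collapses to $H^j(X)^G$ in column $0$ and to $H^j(X)^G/(1+g^*)H^j(X)$ in all positive columns; thus for $l>n$ the rows $j=n$ and $j=n+l$ reduce to a single $\mathbb{Z}_2$ in column $0$. The crucial step is locating the differentials. For $l>n$ I would first show that the fixed class $\gamma\in H^n(X)^G$ is a permanent cocycle: by Proposition \ref{prop 6} the class $1\otimes ab$ is a permanent cocycle, and since $\gamma\cdot ab=0$ while $t^{n+1}\otimes ab\neq0$ in $E_{n+1}$, the Leibniz rule forces $d_{n+1}(1\otimes\gamma)=0$, and degree reasons kill all later differentials on $1\otimes\gamma$. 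Hence $x=\pi^*(t)$ survives beyond degree $n$, so the base row can only be truncated by a differential issuing from the $c$-row $\mathbb{Z}_2[t]\otimes c$ (or from the $abc$-row): by finiteness of $H^*(X_G)$ (Proposition \ref{prop 4.5}) such a differential must fire at page $l+1$, giving $d_{l+1}(1\otimes c)=t^{l+1}$ and hence $x^{l+1}=0$, unless --- as can happen when $l>2n$ --- $1\otimes c$ first hits the $ab$-row at an earlier page, in which case the base row survives up to degree $2n+l$ and is killed only via the $abc$-row, giving $x^{2n+l+1}=0$. When $l=n$ the extra dimension of $H^n(X)^G$ instead lets one fixed degree-$n$ class carry $d_{n+1}$ onto $t^{n+1}$ (now consistent, since $d_{n+1}$ of the top class has room in the $2n$-row), yielding $x^{n+1}=0$. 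In each scenario one then chases the remaining differentials using multiplicativity together with Propositions \ref{prop 5} and \ref{prop 6}, arriving at $E_\infty$ and hence at the additive structure of $H^*(X/G)$.

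Finally I would reconstruct the ring. Put $x=\pi^*(t)$ (killed at the exponent found above, by the edge-homomorphism description recalled in Section 2), and choose $y\in H^n(X_G)$, $w\in H^{2n}(X_G)$, $z\in H^{n+l}(X_G)$ mapping to the surviving classes of $E_\infty^{0,*}$, with $i^*(y)=\gamma$, $i^*(w)=ab$ and $i^*(z)=\gamma c$ (or the corresponding fixed classes when $l=n$). The relations $z^2=0$, $xy=0$, $zx=0$, $x^{l-2n+1}w=0$ follow by comparing $i^*$ of the two sides and using $H^i(X/G)=0$ for $i>2n+l$ (Proposition \ref{prop 4.5}); the products $y^2,\ w^2,\ yw,\ yz$ lie in the indicated degrees and their precise forms --- the relations $I_j$ of the statement --- are read off the same way, the $x$-power corrections being exactly the lower-filtration freedom invisible to $i^*$. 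A coefficient $a_j$ is forced to vanish precisely when the monomial it multiplies --- $x^{2n}$, $x^{3n}$, $x^{4n}$, or the $z$-term --- lies above the truncation exponent of $x$ or sits in the wrong degree, and this yields the side conditions (``$a_3=0$ if $l<2n$'', ``$a_5=0$ if $l\neq 3n$'', ``$a_8=0$ if $l\neq 2n$'', and so on). Assembling the outcomes of the admissible differential patterns gives the three presentations, and Proposition \ref{mac} transfers them to $X/G$. The main obstacle is the differential bookkeeping: several pages carry nonzero differentials and more than one pattern may be consistent with finiteness, the permanent-cocycle constraints and multiplicativity, so all must be carried out; a secondary one is deciding, in the ring structure, which of $y,w,z$ are genuine new generators and which are polynomials in the others and in $x$ --- exactly what the coefficients $a_j$ encode --- and checking that the listed relations are complete.
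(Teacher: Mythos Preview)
Your overall strategy coincides with the paper's: compute the $E_2$-page from Proposition~\ref{nontrivial}, use Proposition~\ref{prop 6} to pin down permanent cocycles, run the case analysis on the differentials emanating from the $c$-row, and then lift to the ring structure. The two subcases for $l>n$ (whether $1\otimes c$ first hits the $ab$-row or the base row) and the separate treatment of $l=n$ are exactly what the paper does.

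There is, however, a concrete gap in your argument that $d_{n+1}(1\otimes\gamma)=0$. You assert that $t^{n+1}\otimes ab\neq 0$ in $E_{n+1}$, but this can fail: if $2n<l<3n$ and $d_{l-2n+1}(1\otimes c)=t^{l-2n+1}\otimes ab$ (precisely the pattern producing possibility~(1)), then $l-2n+1<n+1$ and, by multiplicativity, $t^{n+1}\otimes ab$ is already a boundary on page $l-2n+1$, hence zero in $E_{n+1}$. So your Leibniz contradiction evaporates in this range. The paper avoids this by using a different product: since the nontrivial action on $H^n(X)$ forces $E_2^{k,n}=0$ for all $k>0$, one has $(1\otimes\gamma)(t\otimes 1)=0$ already in $E_2$, whence Leibniz gives $t^{n+2}=0$ in $E_{n+1}^{n+2,0}$; but the bottom row cannot be hit before page $n+1$, so this is a genuine contradiction. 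You should replace the $ab$-argument with this one.

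A minor additional point: you list the shear $c\mapsto c+ab$ when $l=2n$ among the possible actions but never dispose of it. The paper rules it out immediately via Proposition~\ref{prop 5}, since then $c\,g^*(c)=abc\neq 0$ forces a fixed point.
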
 
	\begin{proof}
		We consider two cases: (i) $n< l,$ and (ii) $n= l.$\\
		{\bf{Case (i):}} Assume $n< l.$\\
		It is clear that if $l\neq 2n,$ then  $\pi_1(B_G)$ must act nontrivially on $H^n(X).$ Now, if $l=2n$ and $\pi_1(B_G)$ acts nontrivially on $H^{2n}(X),$ then  we must have  $g^*(c)=c+ab.$ Thus, we get  $cg^*(c)=abc\neq 0.$ By Proposition \ref{prop 5}, the fixed point set is empty, a contradiction. Therefore, $\pi_1(B_G)$ always acts trivially on $H^{2n}(X).$ So, $\pi_1(B_G)$ must  act nontrivially on $H^{n}(X).$  As in Theorem \ref{thm 3.2}, we have  exactly three possibilities of  nontrivial actions on $H^n(X).$ Suppose that $g^*(a)=b$ and $g^*(b)=a.$ Consequently, we get  $g^*(ac)=bc$ and $g^*(bc)=ac,$ where $g$ is the generator of $\pi_1(B_G).$ Note that $\sigma=\tau=1+g^*.$ By Proposition \ref{nontrivial}, we have $E_2^{0,i}\cong \mathbb{Z}_2,$ ~ $i=n,n+l$ and  $E_2^{k,i}= 0~~~\forall~~k>0, ~~ i=n,n+l.$ For  $i\neq n,n+l$, $\pi_1(B_G)$ acts trivially on $H^i(X).$ So, for $l\neq 2n,$ $E_2^{k,i} \cong \mathbb{Z}_2~\forall~k\geq 0;i=0,l,2n,2n+l$ and  for $l= 2n,$ $E_2^{k,i} \cong \mathbb{Z}_2~\forall~k\geq 0,i=0~\&~4n$ and $E_2^{k,2n} \cong \mathbb{Z}_2\oplus \mathbb{Z}_2~\forall~k\geq 0.$ By Proposition $\ref{prop 6},$  $ 1 \otimes ag^*(a)=1 \otimes ab$ is a permanent cocycle. If $d_{n+1}$ is  nontrivial, then we get $d_{n+1}(a+b)=t^{n+1} \otimes 1.$ So, we have $0 = d_{n+1}( (a+b)(t \otimes 1))=(t^{n+1} \otimes 1)(t\otimes 1) =t^{n+2}\otimes 1,$ which is not possible. Thus, $d_{n+1}(a+b)=0.$ 
		Suppose $d_{l-2n+1}\neq 0.$ Then, we must have $2n<l$ and  $d_{l-2n+1}(1 \otimes c)=t^{l-2n+1}\otimes ab.$ We get $E_{l-2n+2}^{*,*}=E_{2n+l} ^{*,*}.$ As $G$ acts freely on $H^*(X),$ we must have $d_{2n+l+1}(1 \otimes abc)= t^{2n+l+1} \otimes 1.$ Thus,  $E_{2n+l+2}^{*,*}=E_{\infty} ^{*,*},$ and hence  $E_{\infty}^{p,q}\cong \mathbb{Z}_2,$ where $0\leq p \leq 2n+l,q=0; 0\leq p \leq l-2n,q=2n;p=0,q=n,n+l.$ Thus, the cohomology groups are given by  
		\[
		H^k(X_G)=
		\begin{cases}
			\mathbb{Z}_2  & 0 \leq k < 2n, k \neq n; l< k \leq 2n+l, k \neq n+l\\
			\mathbb{Z}_2 \oplus \mathbb{Z}_2 & 2n \leq k \leq l, k=n,n+l.
		\end{cases}
		\]
		The permanent cocycles $t\otimes 1,$ $a+b,$ $1 \otimes ab$ and $ c(a+b)$ determines the elemnets $x \in E_{\infty}^{1,0},$ $u \in E_{\infty}^{0,n},$ $v\in E_{\infty}^{0,2n}$ and $s \in E_{\infty}^{0,n+l},$ respectively. Thus, the total complex is given by $${\mathbb{Z}_2[x,u,v,s]}/{<x^{2n+l+1},u^2+\gamma_1v,v^2,s^2,uv,us,vs,xu,x^{l-2n+1}v,sx>},$$
		where deg$~x=1,$ deg$~u=n,$ deg$~v=2n,$ deg$~s=n+l, $ and $\gamma_1\in \mathbb{Z}_2.$ Let $y \in H^{n}(X_G),w \in H^{2n}(X_G)$ and $z \in H^{n+l}(X_G)$ such that $i^*(y)=a+b,i^*(w)=ab$ and $i^*(z)=c(a+b),$ respectively.  We have $I_1= y^2+a_1x^{2n}+a_2w=0,  I_2=w^{2}+a_3x^{4n}+a_4x^{2n}w+a_5z=0;$ $I_3=yw+a_6x^{3n}+a_7x^nw=0$ and $I_4=yz+a_8x^{2n+l}=0$, where $a_i \in \mathbb{Z}_2, 1 \leq i \leq 8;$ $a_3=0$ if $l<2n,$ $a_4=0$ if $l<4n,a_5=0$ if $l\neq 3n$ and   $a_7=0$ if $l<3n.$ 
		By Proposition \ref{mac},  the graded commutative algebra $H^*(X/G)$ is given by
		$${\mathbb{Z}_2[x,y,w,z]}/{<x^{2n+l+1},I_j,z^2,x^{l-2n+1}w,wz,xz,xy>_{1\leq j\leq 4}},$$
		where deg$~x=1,$ deg$~y=n,$ deg$~w=2n,$ and deg$~z=n+l.$ This realizes possibility (1). \\
		Now, suppose $d_{l-2n+1}=0.$ If  $d_{l+1}=0,$ then atleast two lines survive to infinity, which contradicts Proposition \ref{prop 4.5}. So, we have $d_{l+1}(1 \otimes c)=t^{l+1} \otimes 1,$ and  hence $d_{l+1}(1 \otimes abc)=t^{l+1} \otimes ab.$ So, the differentials $d_r=0 ~\forall$ $r > l+1,$ which implies that $E_{\infty}^{*,*}= E_{l+2}^{*,*}.$ Consequently, $E_{\infty}^{p,q}\cong \mathbb{Z}_2,$ where $0\leq p \leq l,q=0,2n; p=0,q=n,n+l.$  For $l<2n$, the cohomology groups $H^k(X_G) $ are given by  
		\[
		H^k(X_G) =
		\begin{cases}
			\mathbb{Z}_2  & j \leq k \leq l+j, j=0,2n~\mbox{and}~k \neq n, n+l\\
			\mathbb{Z}_2 \oplus \mathbb{Z}_2 & k=n,n+l,
		\end{cases}
		\] and, for $l\geq 2n,$ we have 
		\[
		H^k(X_G) =
		\begin{cases}
			\mathbb{Z}_2  & 0 \leq k \leq 2n, k \neq n; l < k \leq 2n+l, k \neq n+l\\
			\mathbb{Z}_2 \oplus \mathbb{Z}_2 & k=n,n+l, 2n \leq k \leq l.
		\end{cases}
		\]
		The permanent cocycles $t\otimes 1,$ $a+b$, $1 \otimes ab$ and $ c(a+b)$ determine the elements $x \in E_{\infty}^{1,0},$  $u \in E_{\infty}^{0,n}$,$v\in E_{\infty}^{0,2n}$ and $s\in E_{\infty}^{0,n+l},$ respectively. Thus, the total complex is given by $${\mathbb{Z}_2[x,u,v,s]}/{<x^{l+1},u^2+\gamma_1v,v^2,s^2,uv+\gamma_2s,us,vs,xu,xv>},$$
		where deg$~x=1,$ deg$~u=n,$ deg$~v=2n,$ deg$ ~s=n+l, $ and $\gamma_1,\gamma_2\in \mathbb{Z}_2,$  $\gamma_2=0$ if $l \neq 2n.$ Let $y \in H^{n}(X_G),w \in H^{2n}(X_G)$ and $z \in H^{n+l}(X_G)$  such that $i^*(y)=a+b,i^*(w)=ab$ and $i^*(z)=ac+cb,$ respectively. We have $I_1= y^2+a_1x^{2n}+a_2w=0,  I_2=w^{2}+a_3x^{4n}+a_4x^{2n}w+a_5z=0,$ $I_3=yw+a_6x^{3n}+a_7x^nw+a_8z=0$ and $I_4=yz+a_9x^{l}w=0$, where $a_i \in \mathbb{Z}_2, 1 \leq i \leq 9;$  $a_1=a_4=0$ if $l<2n, a_3=0$ if $l<4n;a_5=0$ if $l\neq 3n,a_6=0$ if $l< 3n,$ and $ a_8=0$ if $l \neq 2n$. Therefore,  the graded commutative algebra $H^*(X/G)$ is given by
		$${\mathbb{Z}_2[x,y,w,z]}/{<x^{l+1},I_j,z^2,wz,xy,xw>_{1\leq j \leq 4}},$$
		where deg$~x=1,$ deg$~y=n,$ deg$~w=2n,$ and deg$~z=n+l.$ This realizes possibility (2). \\
		{\bf Case (ii):} Assume $n=l.$\\
		As $\pi_1(B_G)$  acts nontrivially on $H^n(X),$ $g^*$ must fixes one or two generator(s) of $H^n(X).$ If $g^*$ fixes one generator, say, $g^*(a)=a,$ then in this case three nontrivial actions are possible: $g^*(b)=c,g^*(c)=b; g^*(b)=a+b,g^*(c)=a+c;g^*(b)=a+c,g^*(c)=a+b.$ Further, if $g^*$ fixes two generators, say, $g^*(a)=a,g^*(b)=b,$ then also in this case, three nontrivial actions are possible: $g^*(c)=a+c;g^*(c)=b+c;g^*(c)=a+b+c.$ This gives eighteen different possibilities of nontrivial actions of $\pi_1(B_G)$ on $H^n(X).$ Now, consider a nontrivial action define as $g^*(a)=a,g^*(b)=c$ and $g^*(c)=b.$  So, we have   $g^{*}(ab)=ac, g^{*}(bc)=bc$ and $g^{*}(ac)=ab.$ By Proposition \ref{prop 5}, the $E_2$-page is given by 	\[
		E_2^{k,i}=
		\begin{cases}
			\text{ker $\tau$} & \mbox{for}~ ~k=0, \\
			\text{ker $\tau$/im $\tau$} & \text{for } k>0.\\

		\end{cases}
		\]
		Note that, for  $i=n,$  ker $\tau = <a,b+c>$ and im $\tau = <b+c>,$  and for $i=2n,$  ker $\tau = <bc,ab+ac)>$ and im $\tau = <ab+ac>.$ So, for $i=n,2n,$ we get $E_2^{0,i}\cong \mathbb{Z}_2 \oplus \mathbb{Z}_2,$  $E_2^{k,i}\cong \mathbb{Z}_2~\forall~ k>0.$  And,  for $i=0,3n,$ $\pi_1(B_G)$ acts trivially on $H^i(X).$ So, we get $E_2^{k,i}\cong H^k(B_G)\otimes H^i(X) \cong \mathbb{Z}_2~\forall~ k\geq 0.$  By  Proposition $\ref{prop 6},$ $bg^*(b)=bc$ is permanent cocycle. It is easy to observed that $d_{n+1}(a)$ must be nonzero. Therefore, $d_{n+1}(a)=t^{n+1}\otimes 1.$ Assume that $d_{n+1}(b+c)= a_0 t^{n+1}\otimes 1,$ $a_0 \in \mathbb{Z}_2.$ Consequently, $d_{n+1}(1\otimes abc)=(t^{n+1}\otimes 1)bc.$  So, we have $E_{n+2}^{*,*}=E_{\infty}^{*,*},$ and hence   $E_{\infty}^{p,q}\cong \mathbb{Z}_2,$ where $0\leq p \leq n,q=0;0<p\leq n,q=2n; p=0,q=n$ and $ E_{\infty}^{0,2n}\cong \mathbb{Z}_2 \oplus \mathbb{Z}_2.$  Thus, the cohomology groups of $X/G$ are given by 
		\[
		H^k(X_G)=
		\begin{cases}
			\mathbb{Z}_2  & 0 \leq k < n; 2n<k\leq 3n,\\
			\mathbb{Z}_2 \oplus \mathbb{Z}_2 &  k=n,2n.
		\end{cases}
		\]
		The	permanent cocycles $t\otimes 1,$ $c_0a+b+c,c_0\in \mathbb{Z}_2,$ $bc$ and $ ac+bc$  determine the elements $x \in E_{\infty}^{1,0}, u \in E_{\infty}^{0,n},$ $v\in E_{\infty}^{0,2n}$ and $s \in E_{\infty}^{0,2n},$ respectively.  Thus, the total complex Tot$E_{\infty}^{*,*}$ is given by $${\mathbb{Z}_2[x,u,v,s]}/{<x^{n+1},u^2+\gamma_1v+\gamma_2s,v^2,s^2,ux,uv,us,vs,sx>},$$
		where deg$~x=1,$ deg$~u=n,$ deg$~v=$ deg$~s=2n, $ and $\gamma_1 ,\gamma_2\in \mathbb{Z}_2.$ Let $y \in H^{n}(X_G),w \in H^{2n}(X_G)$ and $z \in H^{2n}(X_G)$  such that $i^*(y)=c_0a+b+c,i^*(w)=bc$ and $i^*(z)=ab+ac,$ respectively. Clearly, we have $I_1=y^2+a_1w+a_2z=0,$
		$I_2 = yw+a_3x^nw=0,$ and $I_3 = yz+a_4x^nw=0; a_i \in \mathbb{Z}_2, 1\leq i \leq 4.$  Therefore,  the graded commutative algebra $H^*(X/G)$ is given by
		$${\mathbb{Z}_2[x,y,w,z]}/{<x^{n+1},I_j,w^2,z^2,wz,xz,xy>}_{1\leq j\leq 3},$$
		where deg$~x=1,$ deg$~y=n,$ deg$~w=2n,$ and deg$~z=2n.$ This realizes possibility (3). \\
		For the other possibilities of nontrivial actions of $\pi_1(B_G)$ on $H^n(X),$ we get the same cohomology algebras as in case (i) and case (ii).
	\end{proof}
	
	\begin{remark}
		In the possibility $(2)$ of  Theorem \ref{thm3.4}, if we take $ a_i=0$ $\forall ~~ 1 \leq i \leq 9,$ then $X/G \sim_2 \mathbb{RP}^l \times S^{2n} \vee (\mathbb{S}^n \vee \mathbb{S}^{n+l}),$  and if we take $a_i=0$ $\forall ~~ 1 \leq i \leq 4$ in possibility $(3),$ then $X/G \sim_2 \mathbb{RP}^n \times S^{2n} \vee (\mathbb{S}^n \vee \mathbb{S}^{2n}).$ 
	\end{remark}
	Next, we determine the orbit spaces of free involutions on $X \sim_2 \mathbb{S}^n \times \mathbb{S}^m \times \mathbb{S}^l, $ when $\pi_1(B_G)$ acts trivially on $H^*(X).$\\

	\noindent	Recall that if $X \sim_2 \mathbb{S}^n \times \mathbb{S}^m \times \mathbb{S}^l ,$  $n< m< l,$ then $\pi_1(B_G)$ always acts trivially on $H^*(X).$ For the remaining cases, we assume that $\pi_1(B_G)$ acts trivially on $H^*(X).$ Let $\{E_r^{*,*},d_r\}$ be the Leray-Serre spectral sequence of the Borel fibration $X \hookrightarrow X_G \rightarrow B_G.$ So, by Proposition \ref{2.4}, we get  $$E_2^{k,i}=H^k(B_G) \otimes H^i(X)~\forall~  k,i\geq 0.$$ Thus, $E_2^{k,i}\cong \mathbb{Z}_2,$ for $k=0,n,m,l,n+m,n+l,m+l,n+m+l,$ and $\forall~ i\geq 0.$ \\ If  $G=\mathbb{Z}_2$  acts freely on $X,$ then at least one of the images of $1\otimes a,$ $1\otimes b$ or $1\otimes c,$ where $a,b,c$ are generators of $H^*(X),$ under some differential $d_r$ must be nonzero.\\
	\
	Note that  \begin{itemize}
		\item if $d_{r_1}(1\otimes a) \neq 0,$ then $r_1=n+1,$ 
		\item if $d_{r_2}(1\otimes b) \neq 0,$ then $r_2=m-n+1$ or $m+1,$ and 
		\item if $d_{r_3}(1\otimes c) \neq 0,$ then $r_3=l-m-n+1,$  $l-m+1,$ $l-n+1$ or $l+1.$
	\end{itemize} 
	So, the following cases are  possible:
	\begin{enumerate}
		
		\item $d_{r_1}(1\otimes a)\neq 0,$
		\item $d_{r_1}(1\otimes a)=0 $ and $d_{r_2}(1\otimes b) \neq 0,$ 
		\item $d_{r_1}(1\otimes a)=0 ,$ $d_{r_2}(1\otimes b) = 0$ and  $d_{r_3}(1\otimes c) \neq 0 .$
	\end{enumerate}
	We have discussed following theorems depending on above three cases:
	
	\begin{theorem}\label{thm 3.6}
		Let $G=\mathbb{Z}_2$ act freely on a finitistic space $X \sim_2 \mathbb{S}^n \times \mathbb{S}^m \times \mathbb{S}^l, $ where $n\leq m \leq l.$ If $d_{r_1}(1\otimes a)\neq 0,$  then $H^*(X/G)$ is isomorphic to one of the following graded commutative algebras:
		\begin{enumerate}
			\item  $\mathbb{Z}_2[x,y,w,z]/<x^{n+1}, I_j, z^2>_{1\leq j \leq 5}$,\\ where 	deg $x=1,$ deg $y=m$, deg $w=l~\&$ deg $z=m+l;$ 
			$I_1=y^2+a_1z+a_2x^ny+a_3x^{2m-l}w,I_2=w^2+a_4x^{l-m}z+a_5x^ny+a_6x^nw, I_3=yw+a_7z+a_8x^nw+a_9x^ny, I_4= yz+a_{10}x^nz,$ and $ I_5=wz+a_{11}x^nz,$ $a_i \in \mathbb{Z}_2$, $1\leq i \leq 11;$  $a_1=0$ if $n \leq m \neq l,$ $a_2=a_8=a_{10}=0$ if $n < m \leq l,$ $a_3=0$ if $2m-l > n $ or $l>2m,$ $a_4=0$ if $l > m+n,$ $a_5=a_6=a_9=a_{11}=0$ if  either $n < m$ or $ m < l.$

			\item $\mathbb{Z}_2[x,y,w,z]/<x^{n+1}, I_j, w^2,z^2,wz,x^{l-m+1}y,x^{l-m+1}w>_{1\leq j \leq 3}$, \\ where deg $x=1,$ deg $y=m,$ deg $w=n+m~\&$ deg $z=m+l;$
			$I_1=y^2+a_1x^{m-n}w,$
			$I_2=yw+a_2x^{m+n-l}z,$ and
			$I_3=yz+a_3x^nz,$ 
			$a_i \in \mathbb{Z}_2,$ $1\leq i \leq 3;$ $a_1=0$ if $2m>n+l; a_3=0 $ if $n < m. $

		\end{enumerate}

	\end{theorem}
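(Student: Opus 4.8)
The plan is to study the Leray--Serre spectral sequence $\{E_r^{*,*},d_r\}$ of the Borel fibration $X\stackrel{i}{\hookrightarrow} X_G\stackrel{\pi}{\to} B_G$, following the pattern of the proofs of Theorems \ref{thm 3.2} and \ref{thm3.4}. Since $\pi_1(B_G)$ acts trivially on $H^*(X)$ here, Proposition \ref{2.4} gives $E_2^{k,i}=H^k(B_G)\otimes H^i(X)=\mathbb{Z}_2[t]\otimes H^*(X)$ with $\deg t=1$. Because $H^j(X)=0$ for $0<j<n$, the first differential that can be nonzero on $1\otimes a$ is $d_{n+1}$, and the only available target is $t^{n+1}\otimes 1$; thus the hypothesis $d_{r_1}(1\otimes a)\neq 0$ means precisely $d_{n+1}(1\otimes a)=t^{n+1}\otimes 1$, whence $d_{n+1}(t^k\otimes a)=t^{k+n+1}\otimes 1$ for all $k\geq 0$ by the derivation property.

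The heart of the argument is that $1\otimes b$ must be a permanent cocycle. Since the action is free, $X_G\simeq X/G$ (Proposition \ref{mac}) has $H^i(X/G)=0$ for $i>n+m+l$ (Proposition \ref{prop 4.5}), so every $t^k\otimes 1$ with $k>n$ must die; as the only differentials hitting the bottom row in the relevant range are the $d_{n+1}$'s out of $E_*^{*,n}$, all of the $t^j\otimes a$ must survive to page $n+1$. This rules out a nonzero $d_{m-n+1}(1\otimes b)=t^{m-n+1}\otimes a$ (by the finiteness constraint when $m<2n$, by $d_{n+1}^2=0$ when $m=2n$, and because the target is already dead when $m>2n$), while the remaining candidate $d_{m+1}(1\otimes b)=t^{m+1}\otimes 1$ vanishes because the bottom row has been truncated (in the exceptional case $m=n$ one replaces $b$ by $a+b$). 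One then splits into two cases according to whether $1\otimes c$ is a permanent cocycle. If it is, then so is $1\otimes bc$, while $1\otimes a$, $1\otimes ab$, $1\otimes ac$, $1\otimes abc$ all die under $d_{n+1}$ by the derivation property; hence $E_{n+2}=E_\infty$ with $E_\infty^{k,q}\cong\mathbb{Z}_2$ for $0\leq k\leq n$ and $q\in\{0,m,l,m+l\}$, and reading off $\mathrm{Tot}\,E_\infty$ and choosing $y,w,z\in H^*(X_G)$ with $i^*(y)=b$, $i^*(w)=c$, $i^*(z)=bc$ yields, after resolving the multiplicative extensions, possibility (1). If $1\otimes c$ is not permanent, the same finiteness argument (ruling out every differential on $1\otimes c$ whose target row carries an $a$-factor) forces $d_{l-m+1}(1\otimes c)=t^{l-m+1}\otimes b$, which in turn forces $l\leq m+n$ so that the target is still alive; tracing the derivation property, $1\otimes c$ and $1\otimes ac$ die at page $l-m+1$ (truncating the $b$- and $ab$-rows to length $l-m+1$), $1\otimes a$ and $1\otimes abc$ die at page $n+1$, and $1\otimes ab$, $1\otimes bc$ survive, so reading off $\mathrm{Tot}\,E_\infty$ with $i^*(y)=b$, $i^*(w)=ab$, $i^*(z)=bc$ yields possibility (2). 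In both cases Proposition \ref{mac} identifies $H^*(X_G)$ with $H^*(X/G)$, and the borderline configurations ($m=n$, $l=m$, $l=m+n$, and the alternative first differentials these permit) are handled in the same way and collapse into these two algebras.

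The routine but delicate step is resolving the ring extensions from the associated graded $E_\infty$ to $H^*(X_G)$: each of $y^2,w^2,yw,yz,wz$ vanishes in $E_\infty$ but in $H^*(X_G)$ equals a $\mathbb{Z}_2$-combination of those monomials $x^j\cdot(\text{generator})$ of the matching total degree that actually survive, and the side conditions on the $a_i$ record exactly when such a monomial exists and is nonzero (e.g.\ $a_1=0$ unless $m=l$, since $\deg y^2=\deg z$ only then, whereas $a_7$ carries no condition because $\deg yw=\deg z=m+l$ always). I expect the main obstacle to be twofold: showing that every configuration of differentials compatible with $d_{r_1}(1\otimes a)\neq 0$ really reduces to one of the two stated patterns — that is what the convergence/finiteness argument above is for, since it is exactly what excludes a spurious third pattern in which $1\otimes b$ also supports a differential — and then carrying out the degree bookkeeping for the $a_i$ uniformly over all the sub-ranges of $n\leq m\leq l$.
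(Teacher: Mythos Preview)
Your overall strategy matches the paper's: analyze the Leray--Serre spectral sequence with $d_{n+1}(1\otimes a)=t^{n+1}\otimes 1$, show that (up to replacing $b$ by $a+b$ when $n=m$) the class $1\otimes b$ is permanent, and then split on whether $1\otimes c$ is permanent to obtain possibilities (1) and (2). The paper organizes this as a four-way case split on $(d_{r_2}(1\otimes b),d_{r_3}(1\otimes c))$ and then observes that the cases with $d_{r_2}(1\otimes b)\neq 0$ force $n=m$ and collapse into case~(i) after the substitution $b\mapsto a+b$; you are doing the same thing, just front-loading that observation.

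There is, however, a gap in your justification that $1\otimes b$ is permanent when $n<m<2n$. You write that ``every $t^k\otimes 1$ with $k>n$ must die'' and deduce that every $t^j\otimes a$ must survive to page $n+1$. But Proposition~\ref{prop 4.5} only gives $H^i(X/G)=0$ for $i>n+m+l$, not for $i>n$; the vanishing of $E_\infty^{k,0}$ for $n<k\le n+m+l$ is part of the \emph{conclusion}, so using it here is circular. The correct argument is the multiplicative one you already use for $m=2n$: if $d_{m-n+1}(1\otimes b)=t^{m-n+1}\otimes a$, then in $E_{n+1}$ the product $(t^{m-n+1}\otimes 1)(1\otimes a)$ equals the class of $t^{m-n+1}\otimes a$, which is zero (it is a $d_{m-n+1}$-boundary); applying the derivation $d_{n+1}$ and using $d_{n+1}(1\otimes a)=t^{n+1}\otimes 1$ gives $0=t^{m+2}\otimes 1\neq 0$, a contradiction. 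This is exactly the $d^2=0$ argument, transported across pages. The same repair is needed when you rule out differentials on $1\otimes c$ landing in rows carrying an $a$-factor (the cases $r_3=l-n+1$ with $l<2n$ and $r_3=l-m-n+1$ with $l<m+2n$); ``finiteness'' alone does not exclude these, but the derivation property does. With this correction your outline goes through and agrees with the paper's proof.
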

	\begin{proof}
		If $d_{r_1}(1\otimes a)\neq 0,$ then $r_1=n+1$ and $d_{n+1}(1\otimes a)=t^{n+1}\otimes 1.$ For the remaining differentials, the following four cases are possible:  (i) $d_{r_2}(1\otimes b)=d_{r_3}(1\otimes c)= 0,$ (ii) $d_{r_2}(1\otimes b) \neq 0 ~\&~ d_{r_3}(1\otimes c)=0,$ (iii) $d_{r_2}(1\otimes b)= 0 ~\&~ d_{r_3}(1\otimes c) \neq 0$ and (iv) $ d_{r_2}(1\otimes b)\neq 0 ~ \& ~ d_{r_3}(1\otimes c) \neq 0 .$
		
		\noindent {\bf Case (i):} $d_{r_2}(1\otimes b)=0~\&$ $d_{r_3}(1\otimes c)=0.$\\
		In this case, we get  $d_{n+1}(1\otimes ab)=t^{n+1}\otimes b $, $d_{n+1}(1\otimes ac)=t^{n+1}\otimes c$ and $d_{n+1}(1\otimes abc)=t^{n+1}\otimes bc.$ So, $d_r=0$ for all $r>n+1.$ Thus,  $E_{n+2}^{*,*}=E_{\infty}^{*,*}.$ \\ If $n\leq m<l,$ then  $E_{\infty}^{p,q} \cong \mathbb{Z}_2$ for $0 \leq p \leq n$, $q=0,m,l,m+l,$ and  zero otherwise. \\ First, we consider $n<m<l.$ \\
		For $l>m+n,$ the cohomology groups $H^k(X_G)$ are given by 
		$$
		H^k(X_G)=
		\begin{cases}
			\mathbb{Z}_2  & j \leq k < n+j, j=0,m,l,m+l,\\
			0 & \mbox{otherwise,}
		\end{cases}
		$$ and, for $l\leq m+n,$ we have    
		$$
		H^k(X_G)=
		\begin{cases}
			\mathbb{Z}_2  & j \leq k < n+j, j=0,m+l; m \leq k < l; m+n < k\leq n+l,\\
			\mathbb{Z}_2 \oplus \mathbb{Z}_2 & l \leq k \leq m+n, \\
			0 & \mbox{otherwise.}
		\end{cases}
		$$ 
		The permanent cocycles  $t\otimes 1, 1 \otimes b,$ $1 \otimes c$ and $1 \otimes bc$ of $E_2^{*,*}$ determine the elements  $x\in E_{\infty}^{1,0}, u \in E_{\infty}^{0,m},$ $v \in E_{\infty}^{0,l}$ 
		and $s \in E_{\infty}^{0,m+l},$ respectively.  Thus, the total complex is given by \begin{center}
			Tot $E_{\infty}^{*,*} = \mathbb{Z}_2[x,u,v,s]/<x^{n+1},u^2+\gamma_1v,v^2,s^2,uv+\gamma_2s,us,vs>,$
		\end{center}where deg $x=1,$ deg $u= m,$ deg $v= l~\&$   deg $s= m+l$ and ${\gamma_1,\gamma_2 \in \mathbb{Z}_2},$ $\gamma_1=0$ if $l \neq 2m.$ Let $y \in H^m(X_G), w \in H^l(X_G)$ and $ z \in H^{m+l}(X_G)$ such that $i^*(y)=b,i^*(w)=c,$ and $i^*(z)=bc,$ respectively. Clearly,  $I_1= y^2+b_1x^{2m-l}w=0,$ $I_2= w^2+b_2x^{l-m}z=0 ~\&$ $I_3= yw+b_3z=0,$   where $b_i \in \mathbb{Z}_2, 1\leq i \leq 3,$ $b_1=0 $ if $2m>n+l$ or $l >2m $  and $b_2=0$ if $l>m+n.$   By Proposition \ref{mac}, the cohomology ring of the orbit space $X/G$ is given by $$ \mathbb{Z}_2[x,y,w,z]/<x^{n+1}, I_j, z^2,yz,wz>_{1\leq j \leq 3}, $$  where deg $x=1,$ deg $y=m$, deg $w=n+m$ and deg $z=m+l.$ This realizes possibility (1) by taking $a_i=0$ for $i=1,2,5,6$ and $8 \leq i \leq 11.$\\
		Now, we consider $n=m<l.$\\
		For $l>2n,$  the  cohomology groups $H^k(X_G)$ is given by 
		$$
		H^k(X_G)=
		\begin{cases}
			\mathbb{Z}_2  & j \leq k < n+j; n+j< k \leq 2n+j, j=0,l,\\
			\mathbb{Z}_2 \oplus \mathbb{Z}_2 & k=n,n+l,\\
			0 & \mbox{otherwise,}
		\end{cases}
		$$  and, for  $l \leq 2n,$ we get   
		$$
		H^k(X_G)=
		\begin{cases}
			\mathbb{Z}_2  & 0 \leq k < n;n+j<k<l+j, j=0,n; n+l < k \leq 2n+l,\\
			\mathbb{Z}_2 \oplus \mathbb{Z}_2 & k=n,n+l;l \leq k \leq 2n, \\
			0 & \mbox{otherwise.}
		\end{cases}
		$$ It is clear that the total complex is same as in the case when $n<m<l.$  Clearly, $I_1= y^2+b_1x^{2n-l}w+b_2x^ny=0,$ $I_2= w^2+b_3x^{l-n}z=0,$ $I_3= yw+b_4z+b_5x^nw=0$ and $I_4=yz+b_6x^nz=0$ where $ b_i \in \mathbb{Z}_2, 1\leq i \leq 6,$ $b_1=b_3=0$ if $l>2n.$ Hence, the  cohomology ring of the orbit space $X/G$ is given by $$\mathbb{Z}_2[x,y,w,z]/<x^{n+1}, I_j, z^2,wz>_{1\leq j \leq 4} ,$$ where deg $x=1,$ deg $y=n$, deg $w=l~\&$ deg $z=n+l.$ This realizes possibility (1) by taking $a_i=0$ for $i=1,5,6,9,11.$\\ 
		If $n\leq m=l,$ then $E_{\infty}^{p,q} \cong \mathbb{Z}_2$ for $0 \leq p \leq n$, $q=0,2m ;$  $E_{\infty}^{p,q}\cong \mathbb{Z}_2 \oplus \mathbb{Z}_2   $ for $0 \leq p \leq n$, $q=m, $ and   zero otherwise. For  $n<m=l,$ the cohomology  groups are given by 
		$$
		H^k(X_G)=
		\begin{cases}
			\mathbb{Z}_2  & j \leq k \leq n+j, j=0,2m,\\
			\mathbb{Z}_2 \oplus \mathbb{Z}_2 &  m \leq k \leq m+n,\\
			0 & \mbox{otherwise,}
		\end{cases}
		$$
		and, for $n=m=l,$ we have 
		$$
		H^k(X_G)=
		\begin{cases}
			\mathbb{Z}_2  & 0 \leq k < n, 2n <k \leq 3n,\\
			\mathbb{Z}_2 \oplus \mathbb{Z}_2 &  n < k < 2n,\\
			\mathbb{Z}_2 \oplus\mathbb{Z}_2 \oplus\mathbb{Z}_2 & k=n,2n,\\
			0 & \mbox{otherwise.}
		\end{cases}
		$$
		Thus,	the total complex  is given by \begin{center}
			Tot $E_{\infty}^{*,*} = \mathbb{Z}_2[x,u,v,s]/<x^{n+1},u^2+\gamma_1s,v^2+\gamma_2s,s^2,uv+\gamma_3s,us,vs>,$
		\end{center} where ${\gamma_1,\gamma_2,\gamma_3 \in \mathbb{Z}_2},$ deg $x=1,$ deg $u$ = deg $v$ = $m$ and  deg $s= 2m.$\\ If $n<m=l,$ then we get  $I_1= y^2+b_1z=0,$ $I_2= w^2+b_2z=0$  and $I_3= yw+b_3z=0,$ where $b_i\in \mathbb{Z}_2, 1\leq i \leq 3.$ Thus, the cohomology ring of the orbit space $X/G$ is given by $$ \mathbb{Z}_2[x,y,w,z]/<x^{n+1}, I_j, z^2,yz,wz>_{1\leq j \leq 3}, $$  where deg $x=1,$ deg $y$= deg $w=m~\&$ deg $z=2m.$ 	  This realizes possibility (1) by taking $a_i=0$ for $i=2,3,5,6 ~\&~ 8 \leq i \leq 11.$\\ 
		If $n=m=l$, then we get  $I_1=y^2+b_1z+b_2x^ny+b_3x^{n}w=0,I_2=w^2+b_4z+b_5x^ny+b_6x^nw=0, I_3=yw+b_7z+b_8x^nw+b_9x^ny=0,I_4 = yz+b_{10}x^nz=0$ and $ I_5=wz+b_{11}x^nz=0, b_i\in \mathbb{Z}_2, 1\leq i \leq 11.$ Thus, the  cohomology ring of the orbit space $X/G$ is given by $$ \mathbb{Z}_2[x,y,w,z]/<x^{n+1}, I_j, z^2>_{1\leq j \leq 5},$$ where deg $x=1,$ deg $y$ = deg $w=n$ and  $z=2n.$
		This realizes possibility (1).\\
		{\bf Case (ii)}  $d_{r_2}(1 \otimes b)\neq 0 ~~\&~~ d_{r_3}(1 \otimes c)=0.$\\ This case is possible only when $n=m.$ We have $d_{n+1}(1 \otimes a)=d_{n+1}(1\otimes b)= t^{n+1}\otimes 1.$ Consequently,  $d_{n+1}(1 \otimes ac)=d_{n+1}(1\otimes bc)= t^{n+1}\otimes c,$ $d_{n+1}(1 \otimes ab)= t^{n+1}\otimes (a+b)$ ~$\&$~ $d_{n+1}(1 \otimes abc)=t^{n+1}\otimes (bc+ac).$ Thus, $E_{n+2}^{*,*}=E_{\infty}^{*,*}.$ The elements  $t\otimes 1,1\otimes (a+b), 1 \otimes c$ and $1 \otimes c(a+b)$ are  the permanent cocycles, and the  cohomology groups and cohomology algebra of $X/G$ are  same as in case (i) when $n=m\leq l.$  \\
		{\bf Case (iii)}  $d_{r_2}(1 \otimes b)=0 ~~\&~~ d_{r_3}(1 \otimes c)\neq 0.$\\ As $d_{r_1}(1 \otimes a)\neq 0 ~~\&~~ d_{r_3}(1 \otimes c)\neq 0,$ it is easy to observe that this case is not possible when $l> m+n$ or $n<m=l.$ \\
		First, we consider $n \leq m <l\leq m+n.$ If $l-m<n,$ then $d_{l-m+1}$ must be nontrivial. So, we get $d_{l-m+1}(1\otimes c)=t^{l-m+1}\otimes b$ and $d_{l-m+1}(1\otimes ac)=t^{l-m+1}\otimes ab.$ Clearly, $d_r=0$  for $l-m+1<r<n+1.$ Since $d_{n+1}(1 \otimes a)=t^{n+1}\otimes 1,$ we get  $d_{n+1}(1 \otimes abc)=t^{n+1}\otimes bc.$ Thus, $E_{n+2}^{*,*}=E_{\infty}^{*,*},$  and $E_{\infty}^{p,q} \cong \mathbb{Z}_2$ for $0 \leq p \leq n$, $q=0,m+l; E_{\infty}^{p,q} \cong \mathbb{Z}_2$ for $0 \leq p \leq l-m$, $q=m,n+m,$ and  otherwise  zero. For $n <m <l<m+n,$ the cohomology groups are given by 
		\\
		$$
		H^k(X_G)=
		\begin{cases}
			\mathbb{Z}_2  & j \leq k \leq n+j, j=0,m+l; m+j \leq k \leq l+j, j=0,n,\\
			0 & \mbox{otherwise.}
		\end{cases}
		$$ For $n=m< l < 2n, $ we have 
		$$
		H^k(X_G)=
		\begin{cases}
			\mathbb{Z}_2  & 0 \leq k < n; n< k \leq l; 2n \leq k < n+l; n+l <k\leq 2n+l,\\
			\mathbb{Z}_2 \oplus \mathbb{Z}_2 & k=n, n+l, \\
			0 & \mbox{otherwise.}
		\end{cases}
		$$
		The permanent cocycles  $t\otimes 1, 1 \otimes b,$ $1 \otimes ab$ and $1 \otimes bc$ of $E_2^{*,*}$ determine the elements $x\in E_{\infty}^{1,0}, u \in E_{\infty}^{0,m},$ $v \in E_{\infty}^{0,n+m}$ 
		$\& ~s \in E_{\infty}^{0,m+l},$ respectively. 
		The total complex is given by \begin{center}
			Tot $E_{\infty}^{*,*} = \mathbb{Z}_2[x,u,v,s]/<x^{n+1},u^2+\gamma_1v,v^2,s^2,uv,us,vs,x^{l-m+1}u,x^{l-m+1}v>,$
		\end{center} where  deg $x=1,$ deg $u$ = $m$, deg $v$ = $n+m~\&$  deg $s= m+l,$ $ \gamma_1 \in \mathbb{Z}_2,\gamma_1=0$ if $n< m.$ Let $ y \in H^m(X_G), w \in H^{n+m}(X_G)$ and $ z \in H^{m+l}(X_G)$ such that $ i^*(y)=b,i^*(w)=ab,$ and $i^*(z)=bc,$ respectively.  Thus, for  $n\leq m<l<n+m,$ we have 	$I_1=y^2+b_1x^{m-n}w=0,$ $I_2=yw+b_2x^{m+n-l}z=0,$ $I_3=yz+b_3x^nz=0,$ where $b_1,b_2,b_3\in \mathbb{Z}_2$ and 	  $b_1=0 $ if $2m>n+l,b_3=0$ if $n< m.$
		Thus, the  cohomology ring  of the orbit space $X/G$ is given by  $$ \mathbb{Z}_2[x,y,w,z]/<x^{n+1}, I_j, w^2,z^2,wz,x^{l-m+1}y,x^{l-m+1}w>_{1\leq j \leq 3},$$  where 
		deg $x=1,$ deg $y=m$, deg $w=n+m,$ and deg $z=m+l.$   This realizes the possibility (2).\\ Next, we consider $n\leq m<l=m+n.$ We must have  $d_{n+1}(1\otimes ab)=d_{n+1}(1 \otimes c)=t^{n+1}\otimes b.$ Consequently, $d_{n+1}(1\otimes ac)=t^{n+1}\otimes (c+ab)$ and $d_{n+1}(1\otimes abc)= t^{n+1}\otimes bc.$ So, we get $E_{n+2}^{*,*}=E_{\infty}^{*,*}.$ The elements $t\otimes 1, 1\otimes b, 1\otimes (c+ab)$ and $1\otimes bc$ are permanent cocycles, and the cohomology algebra of $X/G$ is the same as case (i) when $n \leq m<l$ and $l=m+n.$\\ Finally, we consider $n=m=l.$ We must have $d_{n+1}(1\otimes a)=d_{n+1}(1 \otimes c)=t^{n+1}\otimes 1.$ So, we get $E_{n+2}^{*,*}=E_{\infty}^{*,*}.$ The elements  $t\otimes 1, 1\otimes b, 1\otimes (a+c)$ and $1\otimes b(a+c)$ are permanent cocycles, and  cohomology algebra is the same as in case (i) when $n=m=l.$ \\
		{\bf Case (iv)} $d_{r_2}(1 \otimes b)\neq 0 ~~\&~~ d_{r_3}(1 \otimes c)\neq 0.$\\ In this case, we must have $n=m \leq l.$ First, we consider $n=m<l.$ Clearly, $l< 2n $ or $l\geq 3n$ are not possible. Now, suppose that $2n<l<3n$ and  $d_{l-2n+1}$ is nontrivial. So, we get $d_{l-2n+1}(1 \otimes c)=t^{l-2n+1}\otimes ab$ and  $d_r=0$ for $l-2n+1<r\leq n.$ As $d_{n+1}(1\otimes a)=d_{n+1}(1 \otimes b)=t^{n+1}\otimes 1,$ we have $d_{n+1}(1 \otimes ab)=t^{n+1}\otimes (a+b).$ Thus, we get $0=d_{n+1}\{(t^{l-2n}\otimes ab)(t\otimes 1)\}=t^{l-n+1}\otimes (a+b),$ which is not possible. So, we must have $l=2n.$ We get $d_{n+1}(1\otimes a)=d_{n+1}(1 \otimes b)=t^{n+1}\otimes 1 ~\& ~ d_{n+1}(1\otimes ab)=d_{n+1}(1 \otimes c)=t^{n+1}\otimes (a+b).$ Consequently, $E_{n+2}^{*,*}=E_{\infty}^{*,*}.$ The elements  $t\otimes 1 , 1\otimes (a+b), 1 \otimes (ab+c)$ and $1 \otimes (ac+bc)$ are permanent cocycles, and the cohomology algebra of $X/G$ is the same as in  case (i) when $n=m < l=2n.$\\  Next, we consider $n=m=l.$ We have $d_{n+1}(1\otimes a)=d_{n+1}(1 \otimes b)= d_{n+1}(1 \otimes c)=t^{n+1}\otimes 1.$ So, we get $E_{n+2}^{*,*}=E_{\infty}^{*,*}.$ The elements $t \otimes 1 ,1\otimes (a+b), 1 \otimes (a+c)$ and $1 \otimes (ab+ac+bc)$ are permenent cocycles, and
		the cohomology algebra of $X/G$ is the same as in case (i) when $n=m=l.$  
	\end{proof}
	\begin{theorem} \label{thm 3.7}
		Let $G=\mathbb{Z}_2$ act freely on a finitistic space $X \sim_2 \mathbb{S}^n \times \mathbb{S}^m \times \mathbb{S}^l, $ where $n\leq m \leq l.$ If $d_{r_1}(1\otimes a)=0 $ and $d_{r_2}(1\otimes b) \neq 0,$ then $H^*(X/G)$ is isomorphic to one of the following graded commutative algebras:
		\begin{enumerate}
			
			\item  $\mathbb{Z}_2[x,y,w,z]/<Q(x), I_j, z^2,x^{m-n+1}y,x^{m-n+1}z>_{1\leq j \leq 5},$ \\ where deg $x=1,$ deg $y=n,$ deg $w=l~\&$ deg $z=n+l;$
			$I_1=y^2+a_1x^{2n}+a_2x^{2n-l}w+a_3x^{n}y,I_2=w^2+a_4x^{l}w+a_5x^{l-n}z+a_6x^{2l}, I_3=yw+a_7z+a_8x^{n+l}+a_9x^nw, I_4 = yz+a_{10}x^nz+a_{11}x^{2n}w+a_{12}x^{2n+l}~\&~ I_5=wz+a_{13}x^{n+m}w,$ $a_i \in \mathbb{Z}_2,$ $1\leq i \leq 13;$  $a_2=0$ if $l>2n,$ $a_3=a_{10}=0$ if $m<2n~\&~a_5=0$ if $m<l;$   $Q(x)=x^{n+m+j'+1},$ $j'=0 ~\&$ $l.$ \\ If $j'=0,$ then  $ a_4=0$ if $l>n+m,$ $a_{8}=a_{13}=0$ if $m<l$ and $a_6=a_{12}=0.$ \\
			If $j'=l,$ then $a_9=0$ if $ m<l,$ and  $a_i=0$ for $i=4,11,13.$ 
			\item  $\mathbb{Z}_2[x,y,w,z]/<x^{m+1}, I_j, z^2>_{1\leq j \leq 5},$\\ where 	deg $x$=1, deg $y=n$, deg $w=l~\&$ deg $z=n+l;$
			$I_1=y^2+a_1x^ny+a_2x^{2n-l}w+a_3x^{2n}+a_4z,I_2=w^2+a_5x^{l-n}z+a_6x^mw+a_7x^ny, I_3=yw+a_8z+a_9x^nw+a_{10}x^my, I_4 = yz+a_{11}x^nz+a_{12}x^{2n}w~\&~ I_5=wz+a_{13}x^mz,$ $a_i \in \mathbb{Z}_2$, $1\leq i \leq 13;$  $a_2=0$ if $l >2n,$ $a_3=0$ if $m < 2n ,$ $a_4=a_7=0$ if $n < m$ or $ m < l,$ $a_6=a_{10}=0$ if $ m< l$ and $a_5=0$ if  $l>m+n,a_{12}=0$ if $m>2n.$
			
			\item  $\mathbb{Z}_2[x,y,w,z]/<Q(x), I_j, z^2,wz,x^{m-n+1}y,x^{m-n+1}z,a_0x^{l-m-n+1}w>_{1\leq j \leq 4},$\\
			where deg $x=1,$ deg $y=n$, deg $w=n+m~\&$ deg $z=n+l;$ $I_1=y^2+a_1x^{2n}+a_2x^ny,I_2=w^2+a_3x^{n+m}w+a_4x^{2n+2m}+a_5x^{2m+n-l}z, I_3= yw+a_6x^nw+a_7x^{2n+m}+a_8x^{n+m-l}z~\&~ I_4 = yz+a_9x^{n+l-m}w+a_{10}x^nz+a_{11}x^{2n+l},$ $a_i\in \mathbb{Z}_2, 0 \leq i \leq 11;$ $ a_2=0$ if $m<2n;$  $Q(x)=x^{l+j'+1}, j'=0 ~\mbox{or}~ n+m.$ \\ If $j'=0,$ then $a_1=0$ if $l<2n,$ $a_3=0$ if $l<n+m$ or $m=l,$ $a_4=0$ if $l<2n+2m$ or $m=l,a_5=0$ if $l>m+2n$ or  $m=l,a_7=0$ if $l<2n+m,$ $a_8=0$ if  $l >m+n ,$  $a_{10}=0$ if $m>2n$ and $a_0=a_{11}=0.$\\
			If $j'=n+m ,$ then $a_{10}=0$ if $m<2n,$ $a_3=0$ if $l<2n+2m,$ $a_6=0$ if $l<2n+m,$ and $a_5=a_8=a_9=0~ \&~ a_0 =1.$ 	  
			\item 	$\mathbb{Z}_2[x,y,w,z]/<x^{m+1}, I_j,w^2,
			z^2,wz,x^{l-n+1}y,x^{l-n+1}w>_{1\leq j \leq 3},$\\ where deg $x=1,$ deg $y=n$, deg $w=n+m~\&$ deg $z=n+l;$ 
			$I_1=y^2+a_1x^{2n}+a_2x^{n}y +a_3w,I_2=yw +a_4x^{n}w+a_5x^{m+n-l}z~\&~ I_3= yz+a_6x^nz+a_7x^{l+n-m}w,$ $a_i\in \mathbb{Z}_2, 1 \leq i \leq 7;$  $a_1=a_7=0$ if $m<2n,$ $ a_2=a_4=0$ if  $l<2n,$ $a_3=0$ if $n < m.$
		\end{enumerate}
	\end{theorem}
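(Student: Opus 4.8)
The plan is to analyze the Leray--Serre spectral sequence $\{E_r^{*,*},d_r\}$ of the Borel fibration $X\hookrightarrow X_G\to B_G$, in the same spirit as the proof of Theorem \ref{thm 3.6}. Since $\pi_1(B_G)$ acts trivially on $H^*(X)$, Proposition \ref{2.4} gives $E_2^{k,i}=H^k(B_G)\otimes H^i(X)$ with $H^*(B_G)=\mathbb{Z}_2[t]$, $\deg t=1$, so that $E_2^{k,i}\cong\mathbb{Z}_2$ exactly when $k\ge 0$ and $i\in\{0,n,m,l,n+m,n+l,m+l,n+m+l\}$. For degree reasons the class $1\otimes a\in E_2^{0,n}$ can be hit only by $d_{n+1}$, so the hypothesis $d_{r_1}(1\otimes a)=0$ makes $1\otimes a$ a permanent cocycle. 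The second hypothesis $d_{r_2}(1\otimes b)\ne 0$ then forces one of the two alternatives $d_{m-n+1}(1\otimes b)=t^{m-n+1}\otimes a$ (possible only when $n<m$) or $d_{m+1}(1\otimes b)=t^{m+1}\otimes 1$; these correspond, respectively, to the families $\{(1),(3)\}$ and $\{(2),(4)\}$ of the statement, the latter being exactly the case in which $t^{m+1}\otimes 1$ becomes a boundary and hence $Q(x)=x^{m+1}$.

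Within each alternative I would next pin down the differential originating from $1\otimes c$; the only admissible pages are $l-m-n+1$, $l-m+1$, $l-n+1$ and $l+1$, and the correct one is singled out by three inputs: multiplicativity of $d_r$ together with $a^2=b^2=c^2=0$ (which, for instance, shows $1\otimes ab$ and $1\otimes abc$ survive past page $m-n+1$ since $a^2=0$); the freeness of the action, so that $X_G\simeq X/G$ by Proposition \ref{mac} and hence $H^i(X/G)=0$ for $i$ large by Proposition \ref{prop 4.5}, forcing $1\otimes abc$ (and the appropriate power of $t$) eventually to support a nonzero differential; and the impossibility of two fibre lines surviving to $E_\infty$, which would again contradict the vanishing of $H^i(X/G)$ in high degrees. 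Once all differentials are identified one obtains $E_{\rho+1}^{*,*}=E_\infty^{*,*}$ for the last nonzero page $\rho$, and reading the diagonals of $E_\infty^{*,*}$ gives the additive structure of $H^*(X_G)$ as the total complex $\mathrm{Tot}\,E_\infty^{*,*}$; the many sub-subcases --- governed by the relative sizes of $n,m,l$ and by the borderline equalities $l=2n$, $l=n+m$, $m=l$, etc. --- all collapse into the four templates of the statement (with $w$ of degree $l$ when $1\otimes c$ itself survives and of degree $n+m$ when it is killed and $1\otimes ab$ survives in its place).

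To recover the ring structure I would choose permanent cocycles representing the bottom elements of the surviving fibre lines, lift them to $x,y,w,z\in H^*(X_G)$, and compute $y^2,w^2,z^2,yw,yz,wz$ modulo the filtration using naturality of the cup product and the restriction $i^*\colon H^*(X_G)\to H^*(X)$, which is an edge homomorphism. Each product is determined by its image under $i^*$ up to an ambiguity consisting of sums $a_i\,x^{k}(\text{lower fibre class})$, and the height of $x$ recorded in $Q(x)$, together with the relations $x^{m-n+1}y=0$, $x^{l-n+1}w=0$, etc., and elementary degree counting, force most of these coefficients to vanish in the indicated ranges of $n,m,l$; what remains is precisely the list of indeterminates in (1)--(4). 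Proposition \ref{mac} then identifies the resulting algebra with $H^*(X/G)$.

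The main obstacle is not any single step but the scale of the case analysis: two choices for the differential on $1\otimes b$, up to four for $1\otimes c$, and then a further split according to the sizes of $n,m,l$, each requiring its own bounded-$E_\infty$ computation. The delicate point inside the ring computation is deciding, case by case, which correction terms $a_i x^k(\cdots)$ are a priori allowed and which are killed by degree --- it is exactly these constraints that produce the conditional vanishings (``$a_2=0$ if $l>2n$'', ``$a_3=a_{10}=0$ if $m<2n$'', and so on) recorded in the statement.
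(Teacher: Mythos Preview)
Your plan is correct and follows essentially the same route as the paper's proof: a Leray--Serre spectral sequence analysis that first splits on $r_2\in\{m-n+1,\,m+1\}$ (yielding the pairs $\{(1),(3)\}$ and $\{(2),(4)\}$ exactly as you say), then on the page on which $1\otimes c$ supports a differential, followed by reading off $E_\infty^{*,*}$ and solving the multiplicative extension problem via $i^*$. The only refinement worth flagging is that within the $r_2=m-n+1$ branch the paper further splits on the differential supported by $1\otimes ab$ (either $d_{n+m-l+1}(1\otimes ab)=t^{n+m-l+1}\otimes c$ or $d_{n+m+1}(1\otimes ab)=t^{n+m+1}\otimes 1$), and this is precisely what produces the two values $j'=l$ and $j'=0$ of $Q(x)$ in possibility~(1) and, analogously, $j'=n+m$ versus $j'=0$ in possibility~(3); your outline tracks the differential from $1\otimes c$ but you should also track the one from $1\otimes ab$ to see this dichotomy.
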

	\begin{proof}
		If $d_{r_1}(1\otimes a)=0~\&~d_{r_2}(1\otimes b)\neq 0,$ then either $r_2=m-n+1$ or $r_2=m+1.$ 
		In this theorem, we consider two cases: (i) $d_{r_3}(1\otimes c)=0$ and (ii)  $d_{r_3}(1\otimes c)\neq 0.$\\
		{\bf Case (i):} $d_{r_3}(1\otimes c)=0.$\\
		First, Suppose that   $r_2=m-n+1.$ We must have $n<m\leq l,$ and  $d_{m-n+1}(1\otimes b)=t^{m-n+1}\otimes a.$ So, $d_{m-n+1}(1\otimes bc)=t^{m-n+1}\otimes ac.$ Assume that   $d_{m+n-l+1}$ is nontrivial. Then, we have $d_{m+n-l+1}(1\otimes ab)=t^{n+m-l+1}\otimes c.$ As $G$ acts freely on $X,$  we must have  $d_{m+n+l+1}(1\otimes abc)=t^{n+m+l+1}\otimes 1.$ Thus, $E_{n+m+l+2}^{*,*}=E_{\infty}^{*,*}$ and $E_{\infty}^{p,q} \cong \mathbb{Z}_2$ for $0 \leq p \leq n+m-l$, $q=l;0 \leq p \leq m-n$, $q=n,n+l;0 \leq p \leq n+m+l$, $q=0,$ and zero otherwise . For $n<m<l,$ the cohomology groups $H^k(X_G)$ are given by 
		\\
		$$
		\begin{cases}
			\mathbb{Z}_2  & 0 \leq k < n; m+j < k < l+j,j=0,n; m+l< k \leq m+n+l,\\
			\mathbb{Z}_2 \oplus \mathbb{Z}_2 & l \leq k \leq n+m; n+j \leq k \leq m+j, j=0,l,\\
			0 & \mbox{otherwise,}
		\end{cases}
		$$
		and, for $n<m=l,$ we have 
		$$
		H^k(X_G)=
		\begin{cases}
			\mathbb{Z}_2  & 0 \leq k < n; 2m< k \leq 2m+n,\\
			\mathbb{Z}_2 \oplus \mathbb{Z}_2 & n \leq k < m;m<k<n+m; n+m< k \leq 2m,\\
			\mathbb{Z}_2 \oplus \mathbb{Z}_2 \oplus \mathbb{Z}_2 & k=m,n+m,\\
			
			0 & \mbox{otherwise.}
		\end{cases}
		$$
		The permanent cocycles $t\otimes 1, 1 \otimes a,$ $1 \otimes c $ and $1 \otimes ac$ of $E_2^{*,*}$ determine the elements $x \in   E_{\infty}^{1,0}, u \in E_{\infty}^{0,n},$ $v \in E_{\infty}^{0,l}$ 
		and $ ~s \in E_{\infty}^{0,n+l},$ respectively.  The total complex Tot$ E_{\infty}^{*,*}$ is given by  $${\mathbb{Z}_2[x,u,v,s]}/ <x^{n+m+l+1}, u^2+\gamma_1v,v^2,s^2,uv+\gamma_2s,us,vs,x^{m-n+1}u,x^{m-n+1}s, x^{m+n-l+1}v>,$$ where deg $x=1,$ deg $u = n$, deg $v = l~\&$  deg $s= n+l$ and  $\gamma_1,\gamma_2 \in \mathbb{Z}_2,$ $\gamma_1=0$ if $l\neq 2n.$  Let $ y \in H^n(X_G), w \in H^{l}(X_G)$ and $ z \in H^{n+l}(X_G)$ such that $i^*(y)=a,i^*(w)=c$ and $i^*(z)=ac,$ respectively. Clearly, $I_1=y^2+b_1x^{2n}+b_2x^{2n-l}w+b_3x^{n}y=0,$ $I_2=w^2+b_4x^{2l}+b_5x^{m-n}z=0 ,$ $ I_3=yw+b_6z+b_7x^{n+l}+b_8x^{n}w=0,$ and $I_4=yz+b_9x^nz+b_{10}x^{2n+l},$ $b_i \in \mathbb{Z}_2, 1 \leq i \leq 10; $   $b_2=0$ if $l>2n, b_3=0$ if $m<2n,$  $b_5=b_8=0$ if $m<l,$ and $ b_9=0$ if $m<2n$.  Then, the  cohomology ring of the orbit space $X/G$ is given by $$ { \mathbb{Z}_2[x,y,w,z]}/{<x^{n+m+l+1}, I_j, z^2,wz,x^{m-n+1}y,x^{m-n+1}z,x^{m+n-l+1}w>_{1\leq j \leq 4.}}$$  where deg $x=1,$ deg $y=n,$ deg $w=l$ and deg $z=n+l.$ This realizes possibility (1) when $j'=l.$\\
		If $d_{m+n-l+1}$ is trivial, then $d_{n+m+1}$ must be nontrivial. Therefore, we have $d_{n+m+1}(1\otimes ab)=t^{n+m+1}\otimes 1.$ Consequently, $d_{n+m+1}(1\otimes abc)=t^{n+m+1}\otimes c.$ Thus, $E_{n+m+2}^{*,*}=E_{\infty}^{*,*}, $ and  hence $E_{\infty}^{p,q} \cong \mathbb{Z}_2$ for $0 \leq p \leq n+m$, $q=0,l;0 \leq p \leq m-n$, $q=n,n+l,$ and zero otherwise . For $n<m<l\leq m+n,$ the cohomology groups $H^k(X_G)$ are given by 
		\\
		$$
		\begin{cases}
			\mathbb{Z}_2  & 0 \leq k < n;  m+j < k < l+j,j=0,n;m+l< k \leq m+n+l,\\
			\mathbb{Z}_2 \oplus \mathbb{Z}_2 &  n+j \leq k \leq m+j, j=0,l;l \leq k \leq n+m,\\
			0 & \mbox{otherwise,}
		\end{cases}
		$$
		and, for $l>m+n,$ we have 
		$$
		H^k(X_G)=
		\begin{cases}
			\mathbb{Z}_2  & 0 \leq k < n; m+j< k \leq n+m+j,j=0,l; l\leq k<n+l,\\
			\mathbb{Z}_2 \oplus \mathbb{Z}_2 & n+j\leq k \leq m+j,j=0,l,\\

			0 & \mbox{otherwise.}
		\end{cases}
		$$	
		For $n<m=l,$  the cohomology groups  are given by 
		$$
		H^k(X_G)=
		\begin{cases}
			\mathbb{Z}_2  & 0 \leq k < n; 2m< k \leq 2m+n,\\
			\mathbb{Z}_2 \oplus \mathbb{Z}_2 & n \leq k <m;m<k<n+m; n+m< k \leq 2m,\\
			\mathbb{Z}_2 \oplus \mathbb{Z}_2 \oplus \mathbb{Z}_2 & k=m,n+m,\\
			
			0 & \mbox{otherwise.}
		\end{cases}
		$$
		The permanent cocycles $t\otimes 1, 1 \otimes a,$ $1 \otimes c $ and $1 \otimes ac$ of $E_2^{*,*}$ determine the elements  $x\in   E_{\infty}^{1,0},  u \in E_{\infty}^{0,n},$ $v \in E_{\infty}^{0,l},$ and
		$s \in E_{\infty}^{0,n+l},$ respectively. Thus, the total complex Tot$ E_{\infty}^{*,*}$ is given by \begin{center}
			$\mathbb{Z}_2[x,u,v,s]/<x^{n+m+1},u^2+\gamma_1v,v^2,s^2,uv+\gamma_2s,us,vs,x^{m-n+1}u,x^{m-n+1}s>,$
		\end{center}  where deg $x=1,$ deg $u$ = $n$, deg $v$ = $l$ $\&$  deg $s= n+l$ and $\gamma_1,\gamma_2 \in \mathbb{Z}_2, \gamma_1=0 $ if $l\neq 2n.$ Let $y \in H^n(X_G), w \in H^{l}(X_G)$ and $ z \in H^{n+l}(X_G)$ such that $i^*(y)=a,i^*(w)=c$ and $i^*(z)=ac,$ respectively. Clearly, we have  $I_1=y^2+b_1x^{2n}+b_2x^{2n-l}w+b_3x^ny=0,$ $I_2=w^2+b_4x^{l}w+b_5x^{l-n}z=0,$ $I_3=yw+b_6z+b_7x^{n+m}+b_8x^{n}w=0, I_4=yz+b_9x^nz+b_{10}x^{2n}w=0$ and $I_5=wz+b_{11}x^{n+m}w=0,$ $b_i \in \mathbb{Z}_2, 1 \leq i \leq 11;$ $ b_2=0$ if $l>2n, b_3=0$ if $m>2n,$   $b_4=0$ if $l>n+m,$ $b_5=b_7=b_{11} =0 $ if $ m<l,$   and $b_9=0$ if $m<2n.$  Thus, the  cohomology ring of the orbit space $X/G$ is given by $${ \mathbb{Z}_2[x,y,w,z]}/{<x^{n+m+1}, I_j, z^2,x^{m-n+1}y,x^{m-n+1}z>_{1\leq j \leq 5}},$$ where deg $x=1,$ deg $y=n,$ deg $w=l$ and deg $z=n+l.$ This realizes possibility (1) when $j'=0.$\\ 
		Next, we consider $r_2=m+1.$ We have $d_{m+1}(1\otimes b)=t^{m+1}\otimes 1.$ So, we get  $d_{m+1}(1\otimes ab)=t^{m+1}\otimes a $, $d_{m+1}(1\otimes bc)=t^{m+1}\otimes c$ and $d_{m+1}(1\otimes abc)=t^{n+1}\otimes ac.$ Clearly, $d_r=0$ for all $r>m+1.$ Thus, we get $E_{m+2}^{*,*}=E_{\infty}^{*,*}.$ \\ If $n\leq m<l,$ then  $E_{\infty}^{p,q} \cong \mathbb{Z}_2$ for $0 \leq p \leq m$, $q=0,n,l,n+l,$ and zero otherwise.\\ For $l\leq m+n,$ the cohomology groups $H^k(X_G)$ are given by \\
		$$
		\begin{cases}
			\mathbb{Z}_2  & 0 \leq k < n; m+j < k < l+j, j=0,n;m+l< k\leq m+n+l,\\
			\mathbb{Z}_2 \oplus \mathbb{Z}_2 & n+j\leq k \leq m+j, j=0,l; l \leq k \leq n+m,\\
			0 & \mbox{otherwise.}
		\end{cases}
		$$ and, for $l > m+n, $ we have 
		$$
		H^k(X_G)=
		\begin{cases}
			\mathbb{Z}_2  & j \leq k < n+j, j=0,l; m+j < k \leq m+n+j;j=0,l,\\
			\mathbb{Z}_2 \oplus \mathbb{Z}_2 & n+j \leq k \leq m+j,j=0,l, \\
			0 & \mbox{otherwise.}
		\end{cases}
		$$
		The permanent cocycles $t\otimes 1, 1 \otimes a,$ $1 \otimes $c and $1 \otimes ac$ of $E_2^{*,*}$ determine the elements $ x\in   E_{\infty}^{1,0}, u \in E_{\infty}^{0,n},$ $v \in E_{\infty}^{0,l}$ and
		$s \in E_{\infty}^{0,n+l},$ respectively.  Thus, the total complex is given by \begin{center}
			Tot $E_{\infty}^{*,*} = \mathbb{Z}_2[x,u,v,s]/<x^{m+1},u^2+\gamma_1v,v^2,s^2,uv+\gamma_2s,us,vs>,$	\end{center} where deg $x=1,$ deg $u= n,$ deg $v= l,$   deg $s= n+l,$ and ${\gamma_1,\gamma_2 \in \mathbb{Z}_2},$ $\gamma_1=0$ if $l \neq 2n.$ 
		Let $y \in H^n(X_G), w \in H^l(X_G)$ and $ z \in H^{n+l}(X_G)$ such that $i^*(y)=a,i^*(w)=c,$ and $i^*(z)=ac,$ respectively. For $n\leq m<l,$ we have $I_1= y^2+b_1x^{2n-l}w+b_2x^ny+b_3x^{2n}=0,$ $I_2= w^2+b_4x^{l-n}z=0,$   $I_3= yw+b_5z+b_6x^{n}w=0,$ and $I_4=yz+b_7x^nz+b_8x^{2n}w=0,$ $b_i \in \mathbb{Z}_2, 0\leq i \leq 8;$ $b_1=0 $ if $l>2n,$ $b_3=0$ if $m <2n,$ $b_4=0$ if $l>m+n$ and $b_8=0$ if $m>2n.$   Thus, the  cohomology ring of the orbit space $X/G$ is given by $$ \mathbb{Z}_2[x,y,w,z]/<x^{m+1}, I_j, z^2,wz>_{1\leq j \leq 4} ,$$ where  deg $x=1,$ deg $y=n,$ deg $w=l,$ and  deg $z=n+l.$	   This realizes possibility (2) by taking $a_i=0$ for $i=4,6,7,10,13.$ \\ 
		If $n< m=l,$ then $E_{\infty}^{p,q} = \mathbb{Z}_2$ for $0 \leq p \leq m$, $q=0,n,m,n+m $ and zero otherwise. Further, if $n=m=l,$ then $E_{\infty}^{p,q}=\mathbb{Z}_2$ for $0 \leq p \leq n$, $q=0,2n ;$ $E_{\infty}^{p,q}=\mathbb{Z}_2 \oplus \mathbb{Z}_2   $ for $0 \leq p \leq n$, $q=n, $ and  zero otherwise. For  $n<m=l,$ the cohomology  groups are given by 
		$$
		H^k(X_G)=
		\begin{cases}
			\mathbb{Z}_2  & 0 \leq k < n; 2m<k\leq 2m+n,\\
			\mathbb{Z}_2 \oplus \mathbb{Z}_2 &  n\leq k < m; m<k<m+n;m+n<k\leq2m,\\
			\mathbb{Z}_2 \oplus \mathbb{Z}_2 \oplus \mathbb{Z}_2 & k=m,m+n,\\
			0 & \mbox{otherwise.}
		\end{cases}
		$$
		and, for $n=m=l,$ we have 
		$$
		H^k(X_G)=
		\begin{cases}
			\mathbb{Z}_2  & 0 \leq k < n; 2n <k \leq 3n,\\
			\mathbb{Z}_2 \oplus \mathbb{Z}_2 &  n < k < 2n,\\
			\mathbb{Z}_2 \oplus\mathbb{Z}_2 \oplus\mathbb{Z}_2 & k=n,2n\\
			0 & \mbox{otherwise.}
		\end{cases}
		$$
		It is clear that for  $n<m=l,$ the total complex  is the same as in case when $n\leq m <l.$ For $n=m=l,$  the total complex is given by 
		\begin{center}
			Tot $E_{\infty}^{*,*} = \mathbb{Z}_2[x,u,v,s]/<x^{n+1},u^2+\gamma_1s,v^2+\gamma_2s,s^2,uv+\gamma_3s,us,vs>,$
		\end{center} where  deg $x=1,$ deg $u=$ deg $v=$ $m,$  deg $s= 2m,$ 
		and ${\gamma_1,\gamma_2 \in \mathbb{Z}_2}.$ For $n\leq m=l,$ we have $I_1= y^2+b_1x^{2n-m}+b_2x^{n}y+b_3x^{2n}+b_4z=0,$ $I_2= w^2+b_5x^{m-n}z+b_6x^{m}w+b_7x^ny=0,$ $I_3= yw+b_8z+b_9x^{n}w+b_{10}x^{m}y=0, I_4=yz+b_{11}x^nz=0 ~\&~ I_5=wz+b_{12}x^{m}z=0, b_i \in \mathbb{Z}_2, 1\leq i \leq 12;$ $b_1=0$ if $m>2n,$ $b_3=0$ if $m<2n$ and $b_4=b_{7}=0$ if $n<m.$ Thus, the  cohomology ring of the orbit space $X/G$ is given by $$ \mathbb{Z}_2[x,y,w,z]/<x^{m+1}, I_j, z^2>_{1\leq j \leq 5}, $$  where 	deg $x=1,$ deg $y=n,$ deg $w=m,\&$ deg $z=m+n.$  This realizes possibility (2).\\
		{\bf Case (ii):}  $d_{r_3}(1\otimes c)\neq 0.$\\ 
		First, suppose that $r_2=m-n+1.$ Then, we must have  $n <m \leq l,$ and  $d_{m-n+1}(1\otimes b)=t^{m-n+1}\otimes a.$ Consequently, $d_{m-n+1}(1\otimes bc)=t^{m-n+1}\otimes ac.$ In this case, we have $r_3=l-m-n+1$ or $l+1.$ \\If  $r_3=l+1,$ then $d_{l+1}(1\otimes c) = t^{l+1}\otimes 1$ and $d_{l+1}(1\otimes abc) = t^{l+1}\otimes ab.$ Thus, $E_{l+2}^{*,*}=E_{\infty}^{*,*}.$ If $n<m<l,$  then $E_{\infty}^{p,q}\cong \mathbb{Z}_2,$ for $0\leq p \leq m-n,q=n,n+l; 0 \leq p \leq l,q=0,n+m,$ and  zero otherwise. \\ For $l< m+n,$ the cohomology groups $H^k(X_G)$ are given by 
		$$
		\begin{cases}
			\mathbb{Z}_2  & 0 \leq k < n; m< k\leq l;m+n \leq k < n+l;m+l<k\leq m+n+l,\\
			\mathbb{Z}_2 \oplus \mathbb{Z}_2 & n+j\leq k \leq m+j, j=0,l,\\
			0 & \mbox{otherwise},
		\end{cases}
		$$  and, for $l \geq m+n, $ we have 
		$$
		H^k(X_G)=
		\begin{cases}
			\mathbb{Z}_2  & 0 \leq k < n; j < k < n+j,j=m,l; m+l<k\leq n+m+l,\\
			\mathbb{Z}_2 \oplus \mathbb{Z}_2 & n+j \leq k \leq m+j,j=0,l;m+n \leq k \leq l, \\
			0 & \mbox{otherwise.}
		\end{cases}
		$$
		If $n<m=l,$ then $E_{\infty}^{p,q}\cong \mathbb{Z}_2,$ for $0\leq p \leq m,q=0;0\leq p \leq m-n,q=n;m-n<p\leq m , q=n+m, $ and $E_{\infty}^{p,q}\cong \mathbb{Z}_2\oplus \mathbb{Z}_2,$ for $ 0 \leq p \leq m-n,q=n+m,$ and zero otherwise.\\
		Thus, the	cohomology groups of $X_G$ are given by 
		$$
		H^k(X_G)=
		\begin{cases}
			\mathbb{Z}_2  & 0 \leq k < n; 2m<k\leq 2m+n,\\
			\mathbb{Z}_2 \oplus \mathbb{Z}_2 & n+j\leq k \leq m+j, j=0,m,\\
			0 & \mbox{otherwise},
		\end{cases}
		$$ 
		The permanent cocycles $t\otimes 1, 1 \otimes a,$ $1 \otimes ab$ and $1 \otimes ac$ of $E_2^{*,*}$ determine the elements $x\in E_{\infty}^{1,0}, u \in E_{\infty}^{0,n},$ $v \in E_{\infty}^{0,n+m}$ and
		$s \in E_{\infty}^{0,n+l},$ respectively.  Thus, for $n<m\leq l,$ the total complex is given by \begin{center}
			Tot $E_{\infty}^{*,*} = \mathbb{Z}_2[x,u,v,s]/<x^{l+1},u^2,v^2+\gamma_1s,s^2,uv+\gamma_2s,us,vs,x^{m-n+1}u,x^{m-n+1}s>,$
		\end{center} where deg $x=1,$ deg $u= n,$ deg $v= n+m,$  deg $s= n+l$ and ${\gamma_1, \gamma_2  \in \mathbb{Z}_2};$ $\gamma_1=0$ if $l \neq n+2m$ and  $\gamma_2=0$ if $l \neq n+m.$ Let $y \in H^n(X_G), w \in H^{n+m}(X_G)$ and $ z \in H^{n+l}(X_G)$ such that $i^*(y)=a,i^*(w)=c,$ and $i^*(z)=ac,$ respectively. Clearly,  we have $I_1=y^2+b_1x^{2n}+b_2x^ny=0,$ $I_2=w^2+b_3x^{n+m}w+b_4x^{2n+2m}+b_5x^{2m+n-l}z=0,$ $ I_3=yw+b_6x^{n}w+b_7x^{2n+m}+b_8x^{n+m-l}z=0,$ $I_4=yz+b_9x^{n+l-m}w+b_{10}x^{n}z=0,b_i\in \mathbb{Z}_2, 1\leq i \leq 10;$ $ b_1=0 ~\mbox{if}~ l<2n,b_2=0$ if $ m<2n;$ $ b_3=0 ~\mbox{if}~ l<n+m ~\mbox{or}~ m=l, b_4=0 ~\mbox{if}~ l<2n+2m ~\mbox{or}~ m=l,$  $ b_5=0$ if $l>m+2n$ or $m=l,$ $b_7=0 ~\mbox{if}~ l<2n+m,b_8=0 $ if $l>n+m$ and  $ b_{10}=0 ~\mbox{if}~ m>2n.
		$ Thus, the  cohomology ring of the orbit space $X/G$ is given by  $$\mathbb{Z}_2[x,y,w,z]/<x^{l+1}, I_j, z^2,wz,x^{m-n+1}y,x^{m-n+1}z>_{1\leq j \leq 4},$$ where deg $x=1,$  deg $y=n,$  deg $w=n+m$ and  deg $z=n+l.$
		This realizes possibility (3) when $j'=0.$\\
		If $r_3=l-m-n+1,$ then we must have  $l>n+m$ and $d_{l-n-m+1}(1\otimes c)=t^{l-n-m+1} \otimes ab.$ As $G$ acts freely on $X,$ we get  $d_{n+m+l+1}(1\otimes abc)= t^{n+m+l+1}\otimes 1.$ Thus, $E_{n+m+l+2}^{*,*}=E_{\infty}^{*,*}.$ We get $E_{\infty}^{p,q}\cong\mathbb{Z}_2,$ for  $0 \leq p \leq n+m+l, q=0;0\leq p \leq m-n,q=n,n+l; 0\leq p \leq l-m-n,q=n+m,$ and zero otherwise. Consequently, the cohomology groups are given by
		$$
		H^k(X_G)=
		\begin{cases}
			\mathbb{Z}_2  & 0 \leq k < n; j < k < n+j,j=m,l; m+l<k\leq n+m+l,\\
			\mathbb{Z}_2 \oplus \mathbb{Z}_2 & n+j \leq k \leq m+j,j=0,l;m+n \leq k \leq l, \\
			0 & \mbox{otherwise.}
		\end{cases}
		$$
		The permanent cocycles $t\otimes 1, 1 \otimes a,$ $1 \otimes ab$ and $1 \otimes ac$ of $E_2^{*,*}$  determine the elements $x\in E_{\infty}^{1,0}, u \in E_{\infty}^{0,n},$ $v \in E_{\infty}^{0,n+m}$ and 
		$s \in E_{\infty}^{0,n+l},$ respectively.  Then, the total complex Tot $E_{\infty}^{*,*}$ is given by \begin{center}
			$\mathbb{Z}_2[x,u,v,s]/<x^{n+m+l+1},u^2,v^2+\gamma_1s,s^2,uv,us,vs,x^{m-n+1}u,x^{m-n+1}s,x^{l-m-n+1}v>,$
		\end{center}   where deg $x=1,$ deg $u= n,$ deg $v= n+m,$  deg $s= n+l$ and ${\gamma_1  \in \mathbb{Z}_2},$ $\gamma_1=0$ if $l \neq n+2m.$ Let $y \in H^n(X_G), w \in H^{n+m}(X_G)$ and $ z \in H^{n+l}(X_G)$ such that $i^*(y)=a,i^*(w)=c,$ and $i^*(z)=ac,$ respectively. Clearly, $I_1=y^2+b_1x^{2n} +b_2x^ny=0,$$I_2=w^2+b_3x^{2n+2m}+b_4x^{n+m}w=0,$ $I_3=yw+b_5x^{2n+m}+b_6x^nw=0,$ $I_4=yz+b_7x^{2n+l}+b_8x^nz,b_i\in \mathbb{Z}_2, 1\leq i \leq  8;$ $b_2=0$ if $m<2n; $ $b_4=0$ if $l<2n+2m, $ $b_6=0$ if $l<2n+m,$ and $b_8=0$ if $m<2n.$ So, the cohomology ring $H^*(X_G)$ is given by $${\mathbb{Z}_2[x,y,w,z]}/{<x^{n+m+l+1}, I_j, z^2,wz,x^{m-n+1}y,x^{m-n+1}z,x^{l-m-n+1}w>_{1\leq j \leq 4}}$$ where deg $x=1,$  deg $y=n,$  deg $w=n+m$ and deg $x=n+l.$ This realizes possibility (3) when $j'=m+n.$\\
		Finally, suppose that  $r_2=m+1.$ Clearly, $r_3\neq l-m+1.$ Further, if $r_3=l-m-n+1,$ then we get $0=d_{m+1}\{(t^{l-m-n}\otimes ab)(t\otimes 1)\}=t^{l-n+2}\otimes a,$ which is not possible. So, $r_3=l-n+1$ or $l+1.$\\ First, we consider $r_3=l-n+1.$ If $m<l-n,$ then $d_{r_3}(1\otimes c)=0,$ which contradicts our hypothesis. So, we get $l-n\leq m.$ As $d_{l-n+1}$ is nontrivial, we get $d_{l-n+1}(1\otimes c)=t^{l-n+1}\otimes a$ and $d_{l-n+1}(1\otimes bc)=t^{l-n+1}\otimes ab.$ Also,  we have $d_{m+1}(1\otimes b)=t^{m+1}\otimes 1~\&~d_{m+1}(1\otimes abc)=t^{m+1}\otimes  ac.$ Thus, $E_{m+2}^{*,*}=E_{\infty}^{*,*}.$ If $n\leq m<l$ and $l-n<m,$  then  $E_{\infty}^{p,q}\cong \mathbb{Z}_2,$ for $0\leq p \leq m,q=0,n+l; 0\leq p \leq l-n, q=n,n+m,$ and zero otherwise. Thus,  the cohomology groups  $H^k(X_G)$ are given by 
		$$
		\begin{cases}
			\mathbb{Z}_2  & 0 \leq k < n; m< k\leq l;m+n \leq k < n+l;m+l<k\leq m+n+l,\\
			\mathbb{Z}_2 \oplus \mathbb{Z}_2 & n+j\leq k \leq m+j, j=0,l,\\
			0 & \mbox{otherwise.}
		\end{cases}
		$$ The permanent cocycles $t\otimes 1, 1 \otimes a,$ $1 \otimes ab$ and $1 \otimes ac$ of $E_2^{*,*}$  determine the elements $x\in   E_{\infty}^{1,0},$ $u \in E_{\infty}^{0,n},$ $v \in E_{\infty}^{0,n+m}$ and $s \in E_{\infty}^{0,n+l},$ respectively.  Then, the total complex Tot $E_{\infty}^{*,*}$ is given by \begin{center}
			$\mathbb{Z}_2[x,u,v,s]/<x^{m+1},u^2+\gamma_1v,v^2,s^2,uv,us,vs,x^{l-n+1}u,x^{l-n+1}v>,$ 
		\end{center}  where deg $x=1,$ deg $u= n,$ deg $v= n+m,$  deg $s= n+l$ and  ${\gamma_1  \in \mathbb{Z}_2},$ $\gamma_1=0$ if $ n<m.$  Let  $y \in H^n(X_G), w \in H^{n+m}(X_G)$ and $ z \in H^{n+l}(X_G)$ such that $i^*(y)=a,i^*(w)=c,$ and $i^*(z)=ac,$ respectively. We have $I_1=y^2+b_1x^{2n}+b_2x^{n}y +b_3w=0,I_2=yw +b_4x^{n}w+b_5x^{m+n-l}z=0,$ and $ I_3= yz+b_6x^nz+b_7x^{l+n-m}w=0,$ $b_i\in \mathbb{Z}_2, 1 \leq i \leq 7;$  $b_1=b_7=0$ if $m<2n,$ $b_3=0$ if $n < m$ and $ b_2=b_4=0$ if  $l<2n.$ Therefore, the cohomology ring $H^*(X_G)$ is given by	$$\mathbb{Z}_2[x,y,w,z]/<x^{m+1}, I_j,w^2,
		z^2,wz,x^{l-n+1}y,x^{l-n+1}w>_{1\leq j \leq 3}$$
		deg $x=1,$ deg $y=n,$ deg $w=n+m,$ and  deg $z=n+l.$  This realize possibility (4). \\
		Next, if $n\leq m<l$ and $l-n=m,$ then the cohomology groups and  cohomology algebra are same in the case (i) when $r_2=m+1$ and $l=m+n.$\\ If $n<m=l$ and $d_{l-n+1}(1\otimes c)\neq 0,$ then $l-n=m-n$ and cohomology groups and cohomology algebra are same as in the   case (ii) when $n< m=l.$\\ Next, if $n<m=l$ and $d_{l-n+1}(1\otimes c)= 0,$ then we must have $r_2=r_3=m+1$ and hence, $d_{m+1}(1\otimes b)=d_{m+1}(1 \otimes c)=t^{m+1}\otimes 1.$  The cohomology groups and cohomology algebra are same as in the case (i) when $r_2=m+1$ and $n< m=l.$\\ Finally, if $n=m=l,$ then we must have $r_2=r_3=n+1$ and hence, $d_{n+1}(1\otimes b)=d_{n+1}(1 \otimes c)=t^{n+1}\otimes 1.$ The cohomology groups and cohomology algebra are the same as in  the case (i) when $n=m=l.$ 
	\end{proof}
	\begin{theorem}\label{thm 3.8}
		Let $G=\mathbb{Z}_2$ act freely on a finitistic space $X \sim_2 \mathbb{S}^n \times \mathbb{S}^m \times \mathbb{S}^l, $ where $n\leq m \leq l.$ If $d_{r_1}(1\otimes a)=d_{r_2}(1\otimes b)=0 $ and $d_{r_3}(1\otimes c) \neq 0,$ then the  cohomology ring of the orbit space $X/G$ is isomorphic to one of the following graded commutative algebras:
		\begin{enumerate}
			\item \label{1} $\mathbb{Z}_2[x,y,w,z]/I,$ where $I$ is homogeneous ideal given by: \\ $I=<Q(x), I_j,x^{l-m-n+1}z,c_0x^{l+n-m+1}w,c_1x^{l+m-n+1}y,c_2x^{l+1}y,c_3x^{l+1}w>_{1\leq j \leq 6},$
			where deg $x=1,$ deg $y=n$, deg $w=m~\&$ deg $z=n+m$ and $ I_1 =y^2+a_1x^{2n}+a_2x^{n}y+a_3x^{2n-m}w+a_4z,
			I_2 =w^2+a_5x^{2m}+a_6x^{2m-n}y+a_7x^mw+a_8x^{m-n}z,
			I_3= z^2+a_9x^{2n+2m} +a_{10}x^{n+2m}y+a_{11}x^{2n+m}w+a_{12}x^{n+m}z,
			I_4 =yw+a_{13}x^{n+m}+a_{14}x^my+a_{15}x^{n}w+a_{16}z,
			I_5 =yz+a_{17}x^{2n+m}+a_{18}x^{n+m}y+a_{19}x^{2n}w+a_{20}x^{n}z~\&~$ $
			I_6 =wz+a_{21}x^{2m+n}+a_{22}x^{2m}y+a_{23}x^{n+m}w+a_{24}x^{m}z, a_i\in \mathbb{Z}_2, 1\leq i \leq 24;$  $a_4=0$ if $n<m, a_8=0$ if $l<2m,$ $a_{12}=0$ if $l<2m+2n,$   $a_{20}=0$ if $l<2n+m,$ and  $a_{24}=0$ if $l<n+2m;$   $Q(x)=x^{l+1+j'}, j'=n+m,m$ or $n.$\\
			If $j'=n+m,$ then either \{$c_0=c_1=1~\&~ c_2=c_3=0$ with $a_7=0$ if $n+l<2m,a_{10}=0$ if $l<2n+m,a_{11}=0$ if $l<n+2m~\&~a_{23}=0$ if $l<2m$\} or \{$c_0=c_1=0 ~\&~ c_2=c_3=1$ with $a_{10}=0$ if $l<2n+m,a_{11}=0$ if  $l<n+2m~\&~a_{22}=0$ if $l<2m$\}.\\
			If $j'=m,$ then $c_0=1~\&~ c_1=c_2=c_3=0$ with $a_7=0$ if $n+l<2m, a_9=0$ if $l<2n+m,a_{11}=0$ if $l<n+2m~\&~$$a_{23}=0$ if $l<2m.$\\
			If $j'=n,$ then $c_1=1~ \&~ c_0=c_2=c_3=0$ with $a_5=0$ if $n+l<2m,a_{9}=0$ if $l<2m+n,a_{10}=0$ if $l<2n+m~\&~$$a_{21}=0$ if $l<2m.$  
			\item $\mathbb{Z}_2[x,y,w,z]/<Q(x), I_j,c_0x^{m+l-n+1}y,x^{l-m+1}z,x^{l-m+1}w>_{1\leq j \leq 6},$\\ where deg $x=1,$ deg $y=n$, deg $w=m~\&$ deg $z=n+m$ and ${I_j}'s,  1\leq j \leq 6$ are same as in possibility $(\ref{1}),$ with $a_3=0$ if $l<2n, a_4=0$ if $n < m,a_7=a_{24}=0 $ if $l<2m,a_{8}=0$ if $n+l<2m,a_{11}=0$ if $l<2n+2m,a_{12}=a_{23}=0$ if $l<n+2m,a_{15}=a_{20}=0$ if $l<m+n~\&~a_{19}=0$ if $l<2n+m,$ and  $Q(x)=x^{m+l+j'+1},j'=0 $ or $n.$\\
			If $j'=0,$ then $c_0=0 $ with $a_9=0$ if $l<2n+m,a_{10}=a_{21}=0$ if $l<n+m~\&~ a_{17}=0$ if  $l<2n.$\\ If $j'=n,$ then $c_0=1 $ with $ a_9=a_{22}=0$ if $l<n+m,a_{10}=0$ if $l<2n+m$$~\&~ a_{18}=0$ if  $l<2n.$ 
			\item   $\mathbb{Z}_2[x,y,w,z]/<Q(x), I_j,c_0x^{n+l-m+1}w,x^{l-n+1}y,x^{l-n+1}z>_{1\leq j \leq 6},$\\ where deg $x=1,$ deg $y=n$, deg $w=m~\&$ deg $z=n+m$ and  ${I_j}'s, 1\leq j \leq 6$ are same as in possibility $(\ref{1}),$ with $a_2=a_{20}=0$ if $l<2n, a_4=0$ if $n < m,a_6=0 $ if $l<2m,a_{10}=0$ if $l<2n+2m,a_{14}=a_{24}=0$ if $l<n+m~\&$ $a_{18}=0$ if $l<2n+m$ and $Q(x)=x^{n+1+j'},j'=0 $ or $m.$ \\
			If $j'=0,$ then $c_0=0 $ with $ a_5=0$ if $n+l<2m,a_{9}=a_{12}=0$ if $l<2m+n,a_{11}=a_{17}=0$ if  $l<n+m,a_{21}=0$ if $l<2m~\&~ a_{22}=0$ if $l<2n+m.$\\  If $j'=m,$ then $c_0=1 $ with $a_{7}=0$ if $n+l<2m, a_9=a_{19}=0$ if $l<n+m,a_{11}=a_{22}=0$ if $l<2m+n,a_{12}=0$ if $l<2n+m~\&~ a_{23}=0$ if  $l<2m.$ 
			\item $\mathbb{Z}_2[x,y,w,z]/<x^{l+1}, I_j>_{1\leq j \leq 6},$ where deg $x=1,$ deg $y=n$, deg $w=m~\&$ deg $z=n+m$ and ${I_j}'s, 1\leq j \leq 6$ are same as in possibility $(\ref{1}),$ with $a_1=a_{19}=0$ if $l<2n,$ $a_4=0 $ if $ n<m, a_5=a_{22}=0$ if $l<2m,a_{6}=0$ if $n+l<2m, a_{9}=0$ if $l<2n+2m,$ $a_{10}=a_{21}=0$ if $l<2m+n,$ $a_{11}=a_{17}=0$ if $l<2n+m~\&~$$a_{12}=a_{13}=a_{18}=a_{23}=0$ if $l<n+m.$   
		\end{enumerate}
	\end{theorem}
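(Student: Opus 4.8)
The plan is to run the Leray--Serre spectral sequence of the Borel fibration $X\hookrightarrow X_G\to B_G$ exactly as in the proofs of Theorems~\ref{thm 3.6} and \ref{thm 3.7}, now under the standing hypothesis that $1\otimes a$ and $1\otimes b$ are permanent cocycles while $1\otimes c$ supports a nontrivial differential. By Proposition~\ref{2.4}, $E_2^{k,i}=H^k(B_G)\otimes H^i(X)$, so $E_2^{*,*}$ is the free $\mathbb{Z}_2[t]$-module on $1,a,b,c,ab,ac,bc,abc$, where $t=t\otimes 1\in E_2^{1,0}$. From the admissible transgression lengths listed just before the theorem, $d_{r_3}(1\otimes c)\neq 0$ forces $r_3\in\{\,l-m-n+1,\ l-m+1,\ l-n+1,\ l+1\,\}$, and these four values will produce possibilities $(1)$--$(4)$; the first three occur only under the respective strict inequalities $l>m+n$, $l>m$, $l>n$ (when one fails, the corresponding case is either vacuous or coincides with a computation already made in Theorem~\ref{thm 3.6}, and the degenerate subcases $n=m$, $m=l$, $n=m=l$ are disposed of the same way). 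In each admissible case I first record, using the Leibniz rule together with $a^2=b^2=0$,
\[
d_{r_3}(1\otimes ac)=(1\otimes a)\,d_{r_3}(1\otimes c),\quad
d_{r_3}(1\otimes bc)=(1\otimes b)\,d_{r_3}(1\otimes c),\quad
d_{r_3}(1\otimes abc)=(1\otimes ab)\,d_{r_3}(1\otimes c),
\]
so that exactly two of $1\otimes ac,\,1\otimes bc,\,1\otimes abc$, together with the $1$-tower and the (now suitably truncated) $a$-, $b$- and $ab$-towers, survive to $E_{r_3+1}$.

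Next I would invoke freeness: since $H^i(X)=0$ for $i>n+m+l$, Proposition~\ref{prop 4.5} forces $H^i(X/G)=0$ for $i>n+m+l$, so every $\mathbb{Z}_2[t]$-tower must eventually be truncated. This determines a cascade of further differentials --- a secondary differential whose length depends on the relative sizes of $n,m,l$ (this is what produces the parameter $j'$ and the selectors $c_0,c_1,c_2,c_3$ recording which of the auxiliary truncations $x^{?}y$, $x^{?}w$, $x^{?}z$ actually occur), and, in the non-maximal cases, a final top-dimensional differential of length $n+m+l+1$ carrying $1\otimes abc$ onto the $1$-tower. Once all differentials are pinned down, $E_\infty^{*,*}$ is completely known; reading off $E_\infty^{p,q}$ gives the additive groups $H^k(X_G)$, and the permanent cocycles $t\otimes 1$, $1\otimes a$, $1\otimes b$, $1\otimes ab$ determine classes $x\in E_\infty^{1,0}$, $u\in E_\infty^{0,n}$, $v\in E_\infty^{0,m}$, $s\in E_\infty^{0,n+m}$ generating $\mathrm{Tot}\,E_\infty^{*,*}$, which matches the stated generator degrees $1,n,m,n+m$.

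Finally I would lift to $H^*(X_G)$: choose $y,w,z$ with $i^*(y)=a$, $i^*(w)=b$, $i^*(z)=ab$. Each generator, and each product of generators, is determined only up to terms of strictly higher filtration, i.e.\ up to $\mathbb{Z}_2$-multiples of $x^{k}$ times a lower-degree generator; this is precisely the source of the correction coefficients $a_i$ in the six relations $I_1,\dots,I_6$ and of the selectors $c_j$. The constraints of the form ``$a_i=0$ if $l<\cdots$'' are read off by comparing the internal bidegree of each correction monomial with the range in which $E_\infty$ is nonzero, and one checks directly that the resulting ideal $I$ is homogeneous and closed under multiplication. An appeal to Proposition~\ref{mac} then identifies $H^*(X_G)$ with $H^*(X/G)$, yielding the four algebras. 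The hard part will be exactly this last step: enumerating all the higher-filtration correction terms and establishing the precise nonvanishing range of every $a_i$ and every $c_j$ across the overlapping regimes $l<2n$, $l<2m$, $l<2n+m$, $l<n+2m$, $l<2n+2m$, $l>m+n$, together with the degenerate equalities among $n,m,l$, and organizing this bookkeeping so that the four displayed algebras uniformly cover every subcase.
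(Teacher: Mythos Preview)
Your overall architecture---split on $r_3\in\{l{-}m{-}n{+}1,\,l{-}m{+}1,\,l{-}n{+}1,\,l{+}1\}$, push the Leibniz rule, use Proposition~\ref{prop 4.5} to force eventual truncation, read off $E_\infty$, and lift $t,a,b,ab$ to $x,y,w,z$ with filtration corrections giving the $a_i$---is exactly the route taken in the paper. Two points in your sketch are wrong, though, and would derail the computation if taken literally.

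First, your claim that after $d_{r_3}$ ``exactly two of $1\otimes ac,\,1\otimes bc,\,1\otimes abc$ \ldots survive'' is false in general. When $r_3=l{-}m{-}n{+}1$ one has $d_{r_3}(1\otimes c)=t^{l-m-n+1}\otimes ab$, so by Leibniz $d_{r_3}$ annihilates \emph{all three} of $ac,bc,abc$ (since $a^2=b^2=0$); when $r_3=l{+}1$ one has $d_{l+1}(1\otimes c)=t^{l+1}\otimes 1$, so $d_{l+1}$ is nonzero on \emph{all three}, and $E_{l+2}=E_\infty$ with no further cascade at all. Only in the two middle cases $r_3=l{-}m{+}1$ and $r_3=l{-}n{+}1$ does your ``exactly two survive'' picture hold.

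Second, and more importantly, the secondary differentials are \emph{not} determined by $n,m,l$ (or by freeness) as your phrase ``this determines a cascade'' suggests. In Case~(i), for instance, $1\otimes ac$ may transgress at length $n{+}l{-}m{+}1$ \emph{or} at length $l{+}1$, and independently $1\otimes bc$ may transgress at length $m{+}l{-}n{+}1$ or at length $m{+}l{+}1$; each branch is a genuinely separate scenario and produces a different value of $j'$ and a different pattern $(c_0,c_1,c_2,c_3)$. Likewise Cases~(ii) and~(iii) each bifurcate on whether the surviving class $1\otimes bc$ (resp.\ $1\otimes ac$) supports the shorter or the longer differential, giving $j'=0$ versus $j'=n$ (resp.\ $j'=m$). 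So the case tree is not four leaves but ten, and the $j',c_i$ are not functions of $(n,m,l)$ but labels for which branch actually occurs. Once you rewrite the sketch with this branching made explicit, the rest of your plan (computing $E_\infty^{p,q}$, forming $\mathrm{Tot}\,E_\infty$, and solving for the $a_i$ by bidegree) goes through exactly as in the paper.
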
	
	\begin{proof}
		If $d_{r_1}(1\otimes a)=d_{r_2}(1\otimes b) = 0$ and  $d_{r_3}(1\otimes c) \neq 0 ,$ then we have following four cases: (i) $r_3=l-m-n+1,$ (ii) $r_3=l-m+1,$ (iii) $r_3=l-n+1,$ and (iv) $r_3=l+1.$\\
		{\bf Case (i):} $r_3=l-m-n+1.$\\ Clearly, $n \leq m <l$ and $n+m<l.$  We have  $d_{l-m-n+1}(1\otimes c)= t^{l-m-n+1} \otimes ab.$\\ First, we assume that $d_{n+l-m+1}$ is nontrivial. So, we have $d_{n+l-m+1}(1\otimes ac)= t^{n+l-m+1}\otimes b.$ Now, we have either $d_{m+l-n+1}(1\otimes bc)=0$ or $d_{m+l-n+1}(1\otimes bc)=t^{m+l-n+1}\otimes a .$ \\Let $d_{m+l-n+1}(1\otimes ac)=t^{m+l-n+1}\otimes a .$ As $G$ acts freely on $X,$ we must have  $d_{n+m+l+1}(1\otimes abc)=t^{n+m+l+1}\otimes 1.$ Thus, $E_{n+m+l+2}^{*,*}=E_{\infty}^{*,*}.$ For $n<m< l,$ we have $E_{\infty}^{p,q}\cong \mathbb{Z}_2,$ $ 0 \leq p \leq n+m+l,q=0; 0 \leq p \leq m+l-n,q=n; 0 \leq p \leq n+l-m,q=m 
		$ and $0 \leq p \leq l-m-n,q=n+m,$ and zero otherwise. For $n=m<l,$ we have $E_{\infty}^{p,q} \cong \mathbb{Z}_2,$  $ 0 \leq p \leq 2n+l,q=0$ and $ 0 \leq p \leq l-2n,q=2n; E_{\infty}^{p,q} \cong \mathbb{Z}_2 \oplus \mathbb{Z}_2,$  $ 0 \leq p \leq l,q=n,$ and  zero otherwise. \\ For $n<m<l,$ the cohomology groups are given by 
		\[
		H^k(X_G) =
		\begin{cases}
			\mathbb{Z}_2  & 0 \leq k < n; m+l < k \leq m+n+l,\\
			\mathbb{Z}_2 \oplus \mathbb{Z}_2 & n \leq k < m;n+l < k \leq m+l, \\
			\mathbb{Z}_2\oplus \mathbb{Z}_2\oplus \mathbb{Z}_2 & m \leq k < m+n; l< k \leq n+l,\\
			\mathbb{Z}_2\oplus \mathbb{Z}_2\oplus \mathbb{Z}_2\oplus \mathbb{Z}_2 & m+n \leq k \leq l,\\
			0 & \mbox{otherwise,}
		\end{cases}
		\]
		and, for $n=m<l,$ we have
		\[
		H^k(X_G) =
		\begin{cases}
			\mathbb{Z}_2  & 0 \leq k < n; n+l < k \leq 2n+l,\\
			\mathbb{Z}_2\oplus \mathbb{Z}_2\oplus \mathbb{Z}_2 & n \leq k < 2n; l< k \leq n+l,\\
			\mathbb{Z}_2\oplus \mathbb{Z}_2\oplus \mathbb{Z}_2\oplus \mathbb{Z}_2 & 2n \leq k \leq l,\\
			0 & \mbox{otherwise.}
		\end{cases}
		\]	The permanent cocycles $t\otimes 1, 1 \otimes a,$ $1 \otimes b$ and $1 \otimes ab$ of $E_2^{*,*}$ determine the elements   $x\in   E_{\infty}^{1,0},$ $u \in E_{\infty}^{0,n},$ $v \in E_{\infty}^{0,m}$  and $s \in E_{\infty}^{0,n+m},$ respectively. Thus, the total complex \\Tot $E_{\infty}^{*,*}=\mathbb{Z}_2[x,u,v,s]/I,$ where ideal $I$ is given by \\  $<x^{n+m+l+1},u^2+\gamma_1v+\gamma_2s,v^2+\gamma_3s,s^2,uv+\gamma_4s,us,vs,x^{l+m-n+1}u,x^{l+n-m+1}v,x^{l-m-n+1}s>$ where  deg $x=1,$ deg $u= n,$ deg $v= m,$  deg $s= n+m,$ and ${\gamma_i \in \mathbb{Z}_2}, 1\leq i \leq 4;$ $\gamma_1=0$ if $m \neq 2n,$ and $\gamma_2=\gamma_3=0$ if $n < m.$ Let $y \in H^n(X_G), w \in H^{m}(X_G)$ and $ z \in H^{n+m}(X_G)$ such that $i^*(y)=a,i^*(w)=b,$ and $i^*(z)=ab,$ respectively. Clearly,
		$I_1 =y^2+b_1x^{2n}+b_2x^{n}y+b_3x^{2n-m}w+b_4z =0,
		I_2 =w^2+b_5x^{2m}+b_6x^{2m-n}y+b_7x^mw+b_8x^{m-n}z =0,
		I_3 = z^2+b_9x^{2n+2m} +b_{10}x^{n+2m}y+b_{11}x^{2n+m}w+b_{12}x^{n+m}z =0,
		I_4 =yw+b_{13}x^{n+m}+b_{14}x^my+b_{15}x^{n}w+b_{16}z =0,
		I_5 =yz+b_{17}x^{2n+m}+b_{18}x^{n+m}y+b_{19}x^{2n}w+b_{20}x^{n}z =0~\&~
		I_6 =wz+b_{21}x^{2m+n}+b_{22}x^{2m}y+b_{23}x^{n+m}w+b_{24}x^{m}z =0, 
		$
		where $b_i \in \mathbb{Z}_2, 1 \leq i \leq 24;$  $ b_4=0$ if $n< m , b_7=0$ if $n+l<2m,$ $b_8=b_{23}=0$ if $l<2m , b_{10}=b_{20}=0$ if $l<2n+m, b_{11}=b_{24}=0$ if $l<n+2m,$ and $b_{12}=0$ if $l<2n+2m.$ Thus, the cohomology ring $H^*(X_G)$ is given by	$$\mathbb{Z}_2[x,y,w,z]/<x^{n+m+l+1}, I_j,
		x^{l-m-n+1}z,x^{l+n-m+1}w,x^{l+n-m+1}y>_{1\leq j \leq 6},$$
		where deg $x=1,$ deg $y=n$, deg $w=m,$ and deg $z=n+m.$ This realizes possibility (1) when $j'=n+m$ with $c_0=c_1=1$ and $c_2=c_3=0.$\\ 
		Now, let $d_{m+l-n+1}(1\otimes ac)=0.$ Then,  we must  have $d_{m+l+1}(1 \otimes bc)=t^{m+l+1}\otimes 1$ and $d_{m+l+1}(1 \otimes abc)=t^{m+l+1}\otimes a.$ Thus, $E_{m+l+2}^{*,*}=E_{\infty}^{*,*}.$ For $n<m<l,$ we have  $E_{\infty}^{p,q}\cong \mathbb{Z}_2,$  $ 0 \leq p \leq m+l,q=0,n; 0 \leq p \leq n+l-m,q=m$ and $ 0 \leq p \leq l-m-n,q=n+m,$ and zero otherwise. For $n=m<l,$ we have $E_{\infty}^{p,q}\cong \mathbb{Z}_2,$  $ 0 \leq p \leq n+l,q=0; 0 \leq p \leq l-2n,q=2n;l < p \leq n+l,q=n; E_{\infty}^{p,q}\cong \mathbb{Z}_2 \oplus \mathbb{Z}_2,$  $ 0 \leq p \leq l,q=n,$ and zero otherwise. Note that the cohomology groups are the same as above. The total complex  Tot $ E_{\infty}^{*,*}=\mathbb{Z}_2[x,u,v,s]/I,$ where $I$ is an ideal given by $$<x^{m+l+1},u^2+\gamma_1v+\gamma_2s,v^2+\gamma_3s,s^2,uv+\gamma_4s,us,vs,x^{l+n-m+1}v,x^{l-m-n+1}s>, $$ where deg $x=1,$ deg $u= n,$ deg $v= m~\&$   deg $s= n+m,$ and ${\gamma_i \in \mathbb{Z}_2}, 1\leq i \leq 4;$ $\gamma_1=0$ if $m \neq 2n,$ and $\gamma_2=\gamma_3=0$ if $n < m.$  The ideals ${I_j}'s, 1 \leq j \leq 6$ are also same as above with conditions: $b_{4}=0$ if $n<m,$ $b_{7}=0$ if $m>2n,$ $b_{8}=b_{23}=0$ if $l<2m,$ $b_{9}=0$ if $l<2n+m,$ $b_{11}=0$ if $l<n+2m,$ $b_{12}=0$ if $l<2n+2m,$ $b_{20}=0$ if $l<2n+m,$ and $b_{24}=0$ if $l<n+2m.$ Thus, the cohomology ring $H^*(X_G)$ is given by $$\mathbb{Z}_2[x,y,w,z]/<x^{m+l+1}, I_j,
		x^{l-m-n+1}z,x^{l+n-m+1}w>_{1\leq j \leq 6},$$
		where deg $x=1,$ deg $y=n$, deg $w=m,$ and  deg $z=n+m.$ This realizes possibility (1) when $j'=m.$ \\
		Now, assume that  $d_{n+l-m+1}$ is trivial. We have either $d_{l+1}(1\otimes ac)=0$ or  $d_{l+1}(1 \otimes ac)= t^{l+1}\otimes a.$\\ Let $d_{l+1}(1 \otimes ac)= t^{l+1}\otimes a.$ Then, $d_{l+1}(1 \otimes bc)= t^{l+1}\otimes b,$ and   $d_{n+m+l+1}(1 \otimes abc)= t^{n+m+l+1}\otimes 1.$ Thus, $E_{n+m+l+2}^{*,*}=E_{\infty}^{*,*}.$ For $n<m<l,$ we have  $E_{\infty}^{p,q}\cong  \mathbb{Z}_2,$  $ 0 \leq p \leq n+m+l,q=0; 0 \leq p \leq l,q=n,m$ and $ 0 \leq p \leq l-m-n,q=n+m,$ and zero otherwise. For $n=m<l,$ we have $E_{\infty}^{p,q}\cong \mathbb{Z}_2,$ $ 0 \leq p \leq 2n+l,q=0; 0 \leq p \leq l-2n,q=2n; E_{\infty}^{p,q}\cong \mathbb{Z}_2 \oplus \mathbb{Z}_2,$ $ 0 \leq p \leq l,q=n,$ and zero otherwise. The cohomology groups are same as above. The total complex Tot$E_{\infty}^{*,*}=\mathbb{Z}_2[x,u,v,s]/I,$ where $I$ is an ideal given by\begin{center}
			$<x^{n+m+l+1},u^2+\gamma_1v+\gamma_2s,v^2+\gamma_3s,s^2,uv+\gamma_4s,us,vs,x^{l+1}u,x^{l+1}v,x^{l-m-n+1}s>,$	
		\end{center}  where  deg $x=1,$ deg $u= n,$ deg $v= m~\&$   deg $s= n+m,$ and ${\gamma_i \in \mathbb{Z}_2}, 1\leq i \leq 4;$ $\gamma_1=0$ if $m \neq 2n,$ and $\gamma_2=\gamma_3=0$ if $n<m.$  The ideals ${I_j}'s; 1 \leq j \leq 6$ are same as above with  conditions: $b_4=0 $ if $n<m, b_8=0$ if $l<2m,b_{10}=b_{24}=0$ if $l<n+2m,$ $b_{11}=b_{20}=0$ if $l<2n+m,$ $b_{12}=0 $ if $l<2n+2m,$  and $b_{22}=0$ if $l<2m.$  So, the cohomology ring $H^*(X_G)$ is given by $$\mathbb{Z}_2[x,y,w.z]/<x^{n+m+l+1}, I_j,
		x^{l-m-n+1}z,x^{l+1}y,x^{l+1}w>_{1\leq j \leq 6}$$
		where deg $x=1,$ deg $y=n$, deg $w=m,$ and deg $z=n+m.$ This realizes possibility (1) when $j'=n+m$ with $c_0=c_1=0$ and $c_2=c_3=1.$\\
		Now, let $d_{l+1}(1\otimes ac)=0.$ Then, we must have  $d_{m+l-n+1}(1\otimes bc)= t^{m+l-n+1}\otimes a.$ Consequently, we get   $d_{n+l+1}(1 \otimes ac)=t^{n+l+1}\otimes 1$ and  $d_{n+l+1}(1 \otimes abc)=t^{n+l+1}\otimes b.$ Thus, $E_{n+l+2}^{*,*}=E_{\infty}^{*,*}.$ For $n\leq m<l,$ we have  $E_{\infty}^{p,q}\cong \mathbb{Z}_2,$ $ 0 \leq p \leq n+l,q=0,m; 0 \leq p \leq m+l-n,q=n;$ $ 0 \leq p \leq l-m-n,q=n+m,$ and zero otherwise. The cohomology groups are same as above. The total complex Tot$E_{\infty}^{*,*}$ is given by $$\mathbb{Z}_2[x,u,v,s]/<x^{n+l+1},u^2+\gamma_1v+\gamma_2s,v^2+\gamma_3s,s^2,uv+\gamma_4s,us,vs,x^{l+m-n+1}u,x^{l-m-n+1}s>$$  where deg $x=1,$ deg $u= n,$ deg $v= m,$   deg $s= n+m,$ and ${\gamma_i \in \mathbb{Z}_2}, 1\leq i \leq 4;$ $\gamma_1=0$ if $m \neq 2n,$  and $\gamma_2=\gamma_3=0$ if $m \neq n.$ The ideals ${I_j}'s, 1 \leq j \leq 6$ are same as above  with  conditions: $b_4=0 $ if $n<m, b_5=0$ if $n+l<2m,b_{8}=b_{21}=0$ if $l<2m,b_{9}=b_{24}=0$ if $l<n+2m,$ $b_{10}=0$ if $l<2n+m,$  $b_{12}=0$ if $l<2n+2m,$ and $b_{20}=0$ if $l<2n+m.$ Thus, the cohomology ring $H^*(X_G)$ is given by $$\mathbb{Z}_2[x,y,w,z]/<x^{n+l+1}, I_j,
		x^{l-m-n+1}z,x^{m+l-n+1}y,>_{1\leq j \leq 6}$$
		where deg $x=1,$ deg $y=n$, deg $w=m,$ deg $z=n+m.$ This realizes possibility (1) when $j'=n.$\\
		{\bf Case (ii):} $r_3=l-m+1.$\\ Clearly,  $n \leq m <l.$ We have   $d_{l-m+1}(1\otimes c)= t^{l-m+1} \otimes b $ and $d_{l-m+1}(1\otimes ac)= t^{l-m+1} \otimes ab. $ Now, we have either $d_{m+l-n+1}(1\otimes bc)=0$ or $d_{m+l-n+1}(1 \otimes bc)= t^{m+l-n+1} \otimes a.$\\ First, let $d_{m+l-n+1}(1 \otimes bc)= t^{m+l-n+1} \otimes a.$  It is clear that  $ d_{n+m+l+1}(1 \otimes abc)= t^{n+m+l+1}\otimes 1 .$ Thus, $E_{n+m+l+2}^{*,*}=E_{\infty}^{*,*}.$ For $n<m<l,$ we have  $E_{\infty}^{p,q}\cong\mathbb{Z}_2,$  $ 0 \leq p \leq n+m+l,q=0; 0 \leq p \leq m+l-n,q=n;$  $ 0 \leq p \leq l-m,q=m,n+m,$ and zero otherwise. For $n=m<l,$ we have $E_{\infty}^{p,q}\cong \mathbb{Z}_2,$  $ 0 \leq p \leq 2n+l,q=0; 0 \leq p \leq l-n,q=2n,l-n < p \leq l,q=n;  E_{\infty}^{p,q}\cong \mathbb{Z}_2 \oplus \mathbb{Z}_2,$  $ 0 \leq p \leq l-n,q=n,$ and zero otherwise.\\  For $n<m<l<n+m,$ the cohomology groups are given by 
		\[
		H^k(X_G)=
		\begin{cases}
			\mathbb{Z}_2  & 0 \leq k < n; m+l < k \leq m+n+l,\\
			\mathbb{Z}_2 \oplus \mathbb{Z}_2 & n \leq k < m;l<k<n+m;n+l < k \leq m+l, \\
			\mathbb{Z}_2 \oplus \mathbb{Z}_2 \oplus \mathbb{Z}_2 & m+j \leq k \leq l+j,j=0,n,\\
			0 & \mbox{otherwise,}
		\end{cases}
		\] and, for $n=m<l<2n,$ we have 
		\[
		H^k(X_G)=
		\begin{cases}
			\mathbb{Z}_2  & 0 \leq k < n; n+l < k \leq 2n+l,\\
			\mathbb{Z}_2 \oplus \mathbb{Z}_2 & l<k<2n, \\
			\mathbb{Z}_2 \oplus \mathbb{Z}_2 \oplus \mathbb{Z}_2 & n+j \leq k \leq l+j,j=0,n,\\
			0 & \mbox{otherwise.}
		\end{cases}
		\]  For $l \geq n+m,$ the cohomology groups are similar to  case (i). 
		The total complex Tot$E_{\infty}^{*,*} = \mathbb{Z}_2[x,u,v,s]/I ,$ where ideal $I$ is given by \begin{center}
			$	<x^{n+m+l+1},u^2+\gamma_1v+\gamma_2s,v^2+\gamma_3s,s^2,uv+\gamma_4s,us,vs,x^{l+m-n+1}u,x^{l-m+1}v,x^{l-m+1}s>,$
		\end{center}  where  deg $x=1,$ deg $u= n,$ deg $v= m,$   deg $s= n+m,$ and ${\gamma_i \in \mathbb{Z}_2},$ $ 1\leq i \leq 4;$ $\gamma_1=0$ if $m \neq 2n,$ and $\gamma_2=\gamma_3=0$ if $n<m.$ The ideals ${I_j}'s, 1 \leq j \leq 6$ are same as in case (i)  with  conditions: $b_3=b_{18}=0$ if $l<2n,$ $b_4=0 $ if $ n<m, b_7=b_{24}=0$ if $l<2m,b_{8}=0$ if $n+l<2m,b_{9}=b_{15}=b_{20}=b_{22}=0$ if $l<n+m,$ $b_{10}=b_{19}=0$ if $l<2n+m, b_{11}=0$ if $l<2n+2m,$ and $b_{12}=b_{23}=0$ if $l<n+2m.$ Thus, the cohomology ring $H^*(X_G)$ is given by $$\mathbb{Z}_2[x,y,w.z]/<x^{n+m+l+1}, I_j,
		x^{m+l-n+1}y,x^{l-m+1}w,x^{l-m+1}z>_{1\leq j \leq 6}$$
		where deg $x=1,$ deg $y=n$, deg $w=m,$ and deg $z=n+m.$ This realizes possibility (2) when $j'=n.$\\
		Now, let  $d_{m+l-n+1}(1\otimes bc)=0.$ Then,  we must have  $d_{m+l+1}(1 \otimes bc)=t^{m+l+1}\otimes 1$ and $d_{m+l+1}(1 \otimes abc)=t^{m+l+1}\otimes a.$ Thus, $E_{m+l+2}^{*,*}=E_{\infty}^{*,*}.$ For $n<m<l,$ we have  $E_{\infty}^{p,q}\cong \mathbb{Z}_2,$ if $ 0 \leq p \leq m+l,q=0,n; 0 \leq p \leq l-m,q=m,n+m,$ and  zero otherwise. For $n=m<l,$ we have $E_{\infty}^{p,q}\cong \mathbb{Z}_2,$ $ 0 \leq p \leq n+l,q=0; l-n < p \leq n+l,q=n;0 \leq p \leq l-n,q=n+m;  E_{\infty}^{p,q}\cong \mathbb{Z}_2 \oplus \mathbb{Z}_2,$ $ 0 \leq p \leq l-n,q=n,$ and zero otherwise. For $l<n+m,$ the cohomology groups are same as case (ii) when  $d_{m+l-n+1}$ is nontrivial, and  for $l \geq n+m,$  the cohomology groups are similar to case (i). 
		Thus, the total complex Tot $E_{\infty}^{*,*}=\mathbb{Z}_2[x,u,v,s]/I,$ where ideal $I$ is given by  \begin{center}
			$<x^{m+l+1},u^2+\gamma_1v+\gamma_2s,v^2+\gamma_3s,s^2,uv+\gamma_4s,us,vs,x^{l-m+1}v,x^{l-m+1}s>,$
		\end{center} where   deg $x=1,$ deg $u= n,$ deg $v= m,$   deg $s= n+m,$ and ${\gamma_i \in \mathbb{Z}_2},$ $ 1\leq i \leq 4;$ $\gamma_1=0$ if $m \neq 2n,$ and $\gamma_2=\gamma_3=0$ if $n<m.$ The ideals ${I_j}'s, 1 \leq j \leq 6$ are same as in case (i)  with  conditions: $b_3=b_{17}=0$ if $l<2n,b_4=0$ if $n<m, b_7=b_{24}=0$ if $l<2m,b_8=0$ if $n+l<2m,b_9=b_{19}=0$ if $l<2n+m, b_{10}=b_{15}=b_{20}=b_{21}=0$ if $l<m+n,b_{11}=0$ if $l<2n+2m,$ $b_{12}=b_{23}=0$ if $l<2m+n,$ and $b_{17}=0$ if $l<2n.$  Thus, the cohomology ring $H^*(X_G)$ is given by $$\mathbb{Z}_2[x,y,w,z]/<x^{m+l+1}, I_j,
		x^{l-m+1}w,x^{l-m+1}z>_{1\leq j \leq 6},$$
		where deg $x=1,$ deg $y=n,$ deg $w=m$ and  deg $z=n+m.$ This realizes possibility (2) when $j'=0.$\\
		{\bf Case (iii):}  $r_3=l-n+1.$\\Clearly, $n<l.$ We have  $d_{l-n+1}(1\otimes c)= t^{l-n+1} \otimes a $ and $d_{l-n+1}(1\otimes bc)= t^{l-n+1} \otimes ab.$ We have either  $d_{n+l-m+1}(1\otimes ac)=0$ or $d_{n+l-m+1}(1 \otimes ac)=t^{n+l-m+1}\otimes b.$\\  First, let   $d_{n+l-m+1}(1 \otimes ac)=t^{n+l-m+1}\otimes b.$ Then, we  must have $d_{n+m+l+1}(1 \otimes abc)= t^{n+m+l+1}\otimes 1.$ Thus, $E_{n+m+l+2}^{*,*}=E_{\infty}^{*,*}.$ For $n<m\leq l,$ we have  $E_{\infty}^{p,q}\cong \mathbb{Z}_2$ if $ 0 \leq p \leq n+m+l,q=0; 0 \leq p \leq l-n,q=n,n+m; 0 \leq p \leq n+l-m,q=m,$ and  zero otherwise. For $n=m<l$ we have $E_{\infty}^{p,q}\cong \mathbb{Z}_2$ if $ 0 \leq p \leq 2n+l,q=0; l-n < p \leq l,q=n;0 \leq p \leq l-n,q=2n; E_{\infty}^{p,q}\cong\mathbb{Z}_2 \oplus \mathbb{Z}_2$ if $ 0 \leq p \leq l-n,q=n,$ and zero otherwise. For $l \geq n+m,$ the cohomology groups are similar to case (i),  and  for $l<n+m,$ the cohomology groups are same as in  case (ii). Thus, the total complex Tot$E_{\infty}^{*,*}$ = $\mathbb{Z}_2[x,u,v,s]/I,$ where $I$ is given by \begin{center}
			$<x^{n+m+l+1},u^2+\gamma_1v+\gamma_2s,v^2+\gamma_3s,s^2,uv+\gamma_4s,us,vs,x^{l-n+1}u,x^{n+l-m+1}v,x^{l-n+1}s>,$
		\end{center}  where deg $x=1,$ deg $u= n,$ deg $v= m,$  deg $s= n+m,$ and ${\gamma_i \in \mathbb{Z}_2},$ $ 1\leq i \leq 4;$ $\gamma_1=0$ if $m \neq 2n,$ and $\gamma_2=\gamma_3=0$ if $n < m.$  Clearly, the ideals $I_j, 1 \leq j \leq 6$ are same as in case (i) with  conditions: $b_2=b_{20}=0$ if $l<2n,$ $b_4=0 $ if $ n<m, b_6=b_{23}=0$ if $l<2m,b_{7}=0$ if $n+l<2m,b_{9}=b_{14}=b_{19}=b_{24}=0$ if $l<n+m,b_{10}=0$ if $l<2n+2m,$ $b_{11}=b_{22}=0$ if $l<2m+n$ and $b_{12}=b_{18}=0$ if $l<2n+m.$    Thus, the cohomology ring $H^*(X_G)$ is given by $$\mathbb{Z}_2[x,y,w.z]/<x^{n+m+l+1}, I_j,
		x^{l-n+1}y,x^{n+l-m+1}w,x^{l-n+1}z>_{1\leq j \leq 6}$$
		where deg $x$=1, deg $y=n$, deg $w=m,$ and deg $z=n+m.$ This realizes possibility (3) when $j'=m.$\\
		Now, let   $d_{n+l-m+1}(1\otimes ac)=0.$ Then, we must have  $d_{n+l+1}(1 \otimes ac)=t^{m+l+1}\otimes 1$ and $d_{n+l+1}(1 \otimes abc)=t^{m+l+1}\otimes b.$ Thus, $E_{n+l+2}^{*,*}=E_{\infty}^{*,*}.$ For $n<m\leq l,$ we have  $E_{\infty}^{p,q}\cong \mathbb{Z}_2,$ if $ 0 \leq p \leq n+l,q=0,m; 0 \leq p \leq l-n,q=n,n+m,$ and  zero otherwise. For $n=m<l,$ we have $E_{\infty}^{p,q}\cong \mathbb{Z}_2,$ if $ 0 \leq p \leq n+l,q=0; l-n < p \leq n+l,q=n;0 \leq p \leq l-n,q=2n;  E_{\infty}^{p,q}\cong \mathbb{Z}_2 \oplus \mathbb{Z}_2$ if $ 0 \leq p \leq l-n,q=n,$ and zero otherwise. For $l \geq n+m,$ the cohomology groups are similar to case (i), and for $l<n+m,$ the cohomology groups are same as case (ii). Thus, the total complex Tot $E_{\infty}^{*,*}$ = $\mathbb{Z}_2[x,u,v,s]/I,$ where ideal $I$ is given by \begin{center}
			$<x^{n+l+1},u^2+\gamma_1v+\gamma_2s,v^2+\gamma_3s,s^2,uv+\gamma_4s,us,vs,x^{l-n+1}u,x^{l-n+1}s>,$
		\end{center} where  deg $x=1,$ deg $u= n,$ deg $v= m,$  deg $s= n+m,$ and ${\gamma_i \in \mathbb{Z}_2},$ $ 1\leq i \leq 4;$ $\gamma_1=0$ if $m \neq 2n,$ and $\gamma_2=\gamma_3=0$ if $ n<m.$  The ideals ${I_j}'s; 1 \leq j \leq 6$ are same as in case (i) with  conditions: $b_2=b_{20}=0$ if $l<2n,$ $b_4=0 $ if $ n<m, $ $b_{5}=0$ if $n+l<2m,$ $b_6=b_{21}=0$ if $l<2m,$ $b_{9}=b_{12}=0$ if $l<2m+n,$  $b_{10}=0$ if  $l<2n+2m,b_{11}=b_{14}=b_{17}=b_{24}=0$ if $l<n+m$ and $b_{18}=b_{22}=0$ if $l<2n+m.$   Thus, the cohomology ring $H^*(X_G)$ is given by $$\mathbb{Z}_2[x,y,w,z]/<x^{n+l+1}, I_j,
		x^{l-n+1}y,x^{l-n+1}z>_{1\leq j \leq 6},$$
		where deg $x=1,$ deg $y=n,$ deg $w=m$ and deg $z=n+m.$ This realizes possibility (3) when $j'=0.$\\
		{\bf Case (iv):}  $r_3=l+1.$\\ We have  $d_{l+1}(1\otimes c)= t^{l+1} \otimes 1. $  Consequently,   $d_{l+1}(1\otimes ac)= t^{l+1} \otimes a,$ $d_{l+1}(1\otimes bc)= t^{l+1} \otimes b$ and  $d_{l+1}(1\otimes abc)= t^{l+1} \otimes ab.$ Thus, $E_{l+2}^{*,*}=E_{\infty}^{*,*}.$ For $n<m\leq l ,$ we have  $E_{\infty}^{p,q}\cong \mathbb{Z}_2,$ where $ 0 \leq p \leq l,q=0,n,m,n+m,$ and  zero otherwise. For $n=m\leq l,$ we have $E_{\infty}^{p,q}\cong \mathbb{Z}_2,$  $ 0 \leq p \leq l,q=0,2n;  E_{\infty}^{p,q}\cong \mathbb{Z}_2 \oplus \mathbb{Z}_2,$  $ 0 \leq p \leq l,q=n,$ and  zero otherwise. For $l \geq n+m,$ the cohomology groups are similar to case (i), and  for $l<n+m,$ the cohomology groups are same as in case (ii). The total complex Tot $E_{\infty}^{*,*}$ = $\mathbb{Z}_2[x,u,v,s]/I,$ where ideal $I$ is given by \begin{center}
			$<x^{l+1},u^2+\gamma_1v+\gamma_2s,v^2+\gamma_3s,s^2,uv+\gamma_4s,us,vs>,$
		\end{center} where  deg $x=1,$ deg $u= n,$ deg $v= m,$  deg $s= n+m,$ and ${\gamma_i \in \mathbb{Z}_2},$ $ 1\leq i \leq 4;$ $\gamma_1=0$ if $m \neq 2n,$ and $\gamma_2=\gamma_3=0$ if $ n<m.$ The ideals ${I_j}'s; 1 \leq j \leq 6$ are same as in case (i) with conditions:
		$b_1=b_{19}=0$ if $l<2n,$ $b_4=0 $ if $ n<m, b_5=b_{22}=0$ if $l<2m,b_{6}=0$ if $n+l<2m, b_{9}=0$ if $l<2n+2m,$ $b_{10}=b_{21}=0$ if $l<2m+n,$ $b_{11}=b_{17}=0$ if $l<2n+m,$ and $b_{12}=b_{13}=b_{18}=b_{23}=0$ if $l<n+m.$ Thus, the cohomology ring $H^*(X_G)$ is given by $$\mathbb{Z}_2[x,y,w.z]/<x^{l+1}, I_j>_{1\leq j \leq 6},$$
		where deg $x=1,$ deg $y=n$, deg $w=m$ and deg $z=n+m.$ This realizes possibility (4).	
	\end{proof}
	\begin{example}
		An example of case (1) of Theorem \ref{thm 3.6} can be realized by considering  diagonal action of $G=\mathbb{Z}_2$ on $\mathbb{S}^n \times \mathbb{S}^m \times \mathbb{S}^l,$ where $\mathbb{Z}_2$ acts freely on $\mathbb{S}^n$ and trivially on both $\mathbb{S}^m$ and $\mathbb{S}^l.$  Then, $X/G \sim_2 \mathbb{RP}^n \times \mathbb{S}^m \times \mathbb{S}^l.$ This realizes case (1) by taking $a_7=1$ and $a_i =0$ for $i\neq 7.$ Similarly, case (2) of Theorem  \ref{thm 3.7}, with $a_8=1$ and $a_i=0$ for $i\neq 8,$  and case (4) of Theorem \ref{thm 3.8}, with $a_{16}=1$ and $a_i=0$ for $i\neq 16,$  can be realized.
	\end{example}
	\section{Some Applications on $\mathbb{Z}_2$-Equivarient Maps}
	In this section,  we derive the Borsuk-Ulam type results for free involutions on $X \sim_2 \mathbb{S}^n \times \mathbb{S}^m \times \mathbb{S}^l ,1\leq n \leq m \leq l.$ We determine the nonexistence of $\mathbb{Z}_2$-equivariant maps between  $ X$ and $ \mathbb{S}^{k},$ where $\mathbb{S}^k$ equipped with  the antipodal actions.\\
	\indent Recall that \cite{floyd} the index (respectively, co-index) of a $G$-space $X$ is the greatest integer $k$ (respectively, the lowest integer $k$)  such that there exists a $G$-equivariant map $\mathbb{S}^k \rightarrow X$ (respectively, $ X \rightarrow  \mathbb{S}^k$).\\
	\indent	By  Theorems proved in section 3, we get the largest integer  $s$ for which $w^s \neq 0$ is one of $ n,m,l,n+m,2n+l,n+l,m+l$ or $n+m+l,$ where $w \in H^{1}(X/G)$ is the  characteristic class of the principle $G$-bundle $G \hookrightarrow X \rightarrow X/G.$ We know that index$(X)\leq s$ \cite{floyd}. Thus, we have the following Result:
	
	\begin{proposition}
		Let $G=\mathbb{Z}_2$ act freely on $X \sim_2 \mathbb{S}^n \times \mathbb{S}^m \times \mathbb{S}^l,$  $ n \leq m \leq l.$ Then there does not exist $G-$equivarient map from $\mathbb{S}^d$ to $X,$ for $d > h,$  where $ h$ is one of $ n,m,l,n+m,2n+l,n+l,m+l$ or $n+m+l.$
	\end{proposition}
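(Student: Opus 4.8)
The plan is to deduce the statement from the mod $2$ cohomological index of the free $G$-space $X$, which in turn is controlled by the height of the characteristic class in $H^*(X/G)$ as computed in Section~3. Recall that $w \in H^1(X/G)$ is the characteristic class of the principal bundle $G \hookrightarrow X \to X/G$; under the homotopy equivalence $X/G \simeq X_G$ of Proposition~\ref{mac} it is the image $\pi^*(t)$ of the polynomial generator $t \in H^1(B_G)$, and this is precisely the degree-one class denoted $x$ in Theorems~\ref{thm 3.2}, \ref{thm3.4}, \ref{thm 3.6}, \ref{thm 3.7} and~\ref{thm 3.8} (there $x$ is defined as the permanent cocycle represented by $t \otimes 1$).

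First I would extract from those theorems the height of $w$. In each presentation of $H^*(X/G)$ the defining ideal contains a single pure power of $x$, namely $x^{n+1}$, $x^{m+1}$, $x^{l+1}$, $x^{2n+l+1}$, or $Q(x)=x^{h+1}$ with $h \in \{n+m,\ m+l,\ n+l,\ n+m+l\}$, and one checks subcase by subcase that this is the smallest power with $x^{h+1}=0$, where $h$ is one of the eight integers listed in the statement; in particular $w^{h+1}=0$ in $H^*(X/G)$ for the appropriate $h$. Then I would invoke the classical index estimate already recorded before the statement: since $w^{h+1}=0$, any $G$-map $f\colon \mathbb{S}^{h+1}\to X$ would descend to $\bar f\colon \mathbb{RP}^{h+1}\to X/G$ with $\bar f^*(w)$ the generator of $H^1(\mathbb{RP}^{h+1};\mathbb{Z}_2)$ (the principal bundle pulls back to $\mathbb{S}^{h+1}\to\mathbb{RP}^{h+1}$), so that $0=\bar f^*(w^{h+1})$ would coincide with the nonzero top class of $H^{h+1}(\mathbb{RP}^{h+1})$, a contradiction; hence $\operatorname{index}(X)\le h$ in the sense of \cite{floyd}. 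Finally, for any $d>h$ the standard equivariant inclusion $\mathbb{S}^{h+1}\hookrightarrow\mathbb{S}^d$ would turn a hypothetical $G$-map $\mathbb{S}^d\to X$ into a $G$-map $\mathbb{S}^{h+1}\to X$, which is impossible; this yields the Proposition.

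The argument needs no new input beyond Section~3 and the classical relation between the index and the height of the characteristic class, so there is no genuine obstacle; the only effort is the purely bookkeeping verification that, across the many subcases of Theorems~\ref{thm 3.2}--\ref{thm 3.8}, the height of $x$ in $H^*(X/G)$ is always one of $n,\ m,\ l,\ n+m,\ 2n+l,\ n+l,\ m+l,\ n+m+l$, which is immediate from the explicit defining ideals.
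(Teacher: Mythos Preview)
Your proposal is correct and follows essentially the same approach as the paper: both deduce the result from the height of the characteristic class $w=x\in H^1(X/G)$, read off from the presentations in Section~3, together with the Conner--Floyd inequality $\operatorname{index}(X)\le s$. The paper simply cites \cite{floyd} for the latter, while you spell out the standard pullback-to-$\mathbb{RP}^d$ argument; this is additional detail rather than a different route.
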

	Recall that the Volovikov's index $i(X)$ is the smallest integer $r\geq 2$ such that $d_r: E_r^{k-r,r-1} \rightarrow E_r^{k,0}$ is nontrivial for some $k,$ in the Leray-Serre spectral sequence of the Borel fibration  $ X \stackrel{i} \hookrightarrow X_G \stackrel{\pi} \rightarrow B_G$ \cite{yu}. Again, 	by  Theorems proved in section 3, we get $i(X)$ is $ n, m,l,m-n,l-m,l-n$ or $l-m-n.$
	By taking $Y=\mathbb{S}^{k}$ in Theorem 1.1 \cite{co}, we have

	\begin{theorem}
		Let $G=\mathbb{Z}_2$ act freely on a finitistic $X \sim_2 \mathbb{S}^n \times \mathbb{S}^m \times \mathbb{S}^l,$  $ n\leq m \leq l .$ Then, there is no $G$-equivariant map $f: X \rightarrow \mathbb{S}^{k}$  if $1\leq k < i(X)-1,$ where $i(X)$ is one of  $n,m,l,m-n,l-m,l-n$ or $l-m-n.$
	\end{theorem}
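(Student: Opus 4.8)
The strategy is to feed the Volovikov index $i(X)$ --- which has in effect already been computed in Section~3 --- into the cohomological Borsuk--Ulam theorem of \cite{co}. First I would record what the index computation yields: for each admissible free involution on $X$, the proof of the relevant one among Theorems~\ref{thm 3.2}, \ref{thm3.4}, \ref{thm 3.6}, \ref{thm 3.7} and \ref{thm 3.8} exhibits the first differential of the Leray--Serre spectral sequence of the Borel fibration $X\hookrightarrow X_G\to B_G$ whose image meets the bottom row $E_r^{*,0}$; running through the transgressions $d_{r_1}(1\otimes a)$, $d_{r_2}(1\otimes b)$, $d_{r_3}(1\otimes c)$ and the differentials they force on the products $ab,ac,bc,abc$, one checks that this $r$, and hence $i(X)$, is always one of the seven numbers $n,m,l,m-n,l-m,l-n,l-m-n$ listed just before the theorem.

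Next I would invoke Theorem~1.1 of \cite{co} with $Y=\mathbb{S}^k$ carrying the antipodal $\mathbb{Z}_2$-action. Here $\mathbb{S}^k$ is a finitistic free $G$-space whose mod~$2$ cohomology is that of the $k$-sphere: its Borel space is homotopy equivalent to $\mathbb{RP}^k$ by Proposition~\ref{mac}, and the corresponding spectral sequence has the single nonzero differential $d_{k+1}$, so $i(\mathbb{S}^k)=k+1$. The conclusion of Theorem~1.1 of \cite{co}, specialized to this $Y$, is that a $G$-equivariant map $X\to\mathbb{S}^k$ can exist only if $i(X)\le k+1$. Taking the contrapositive, whenever $1\le k<i(X)-1$ --- equivalently $k+1<i(X)$ --- no such map exists, which is precisely the assertion.

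The deduction in the second paragraph is essentially a one-line argument, so the real weight of the proof lies in the index computation of Section~3, and that is where I would be most careful. The key subtlety is that only differentials \emph{landing in the bottom row} contribute to $i(X)$: a transgression such as $d_{m-n+1}(1\otimes b)=t^{m-n+1}\otimes a$ does not by itself pin down the index since its target sits in the row $q=n$, so one must track the induced differential on the appropriate product class until it reaches $E_r^{*,0}$ before reading off $i(X)$. Granting the case analysis of Section~3, the argument then runs uniformly over all possibilities, the hypothesis $k\ge 1$ serving only to keep the target sphere connected.
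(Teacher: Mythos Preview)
Your proposal is correct and follows exactly the paper's approach: the paper's entire argument is the single sentence ``By taking $Y=\mathbb{S}^{k}$ in Theorem~1.1 \cite{co}, we have'' together with the preceding remark that Section~3 yields $i(X)\in\{n,m,l,m-n,l-m,l-n,l-m-n\}$, and you have unpacked precisely this. Your third paragraph is in fact more careful than the paper itself, since you correctly observe that a differential such as $d_{m-n+1}(1\otimes b)=t^{m-n+1}\otimes a$ lands in row $q=n$ rather than the bottom row and so does not by itself witness the Volovikov index---a subtlety the paper's list glosses over.
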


\end{document}